\numberwithin{equation}{section} 
\newcommand{\eqrefsub}[2]{\eqref{#1}\textsubscript{#2}}
\newcommand{\ie}{i.e.}
\newcommand{\tin}{\text{in }}
\newcommand{\tif}{\text{if }}
\newcommand{\ton}{\text{on }}
\newcommand{\tand}{\text{and }}
\newcommand{\newCCtr}[2][d]{
\newcounter{#2}\setcounter{#2}{0}
\expandafter\xdef\csname kyedtheconst#2\endcsname{#1}}
\newcommand{\Cc}[2][nolabel]{
\stepcounter{#2}
\expandafter\ensuremath{\csname kyedtheconst#2\endcsname_{\arabic{#2}}}
\ifthenelse{\equal{#1}{nolabel}}{}
{\expandafter\xdef\csname kyedconst#1\endcsname
{\expandafter\ensuremath{\csname kyedtheconst#2\endcsname_{\arabic{#2}}}}}}
\newcommand{\Ccn}[2][nolabel]{
\expandafter\ensuremath{\csname kyedtheconst#2\endcsname}
\ifthenelse{\equal{#1}{nolabel}}{}
{\expandafter\xdef\csname kyedconst#1\endcsname
{\expandafter\ensuremath{\csname kyedtheconst#2\endcsname}}}}
\newcommand{\CcSetCtr}[2]{\setcounter{#1}{#2}}
\newcommand{\Cclast}[1]{\expandafter\ensuremath{\csname kyedtheconst#1\endcsname_{\arabic{#1}}}}
\newcommand{\Ccllast}[1]{
\addtocounter{#1}{-1}
\expandafter\ensuremath{\csname kyedtheconst#1\endcsname_{\arabic{#1}}}
\addtocounter{#1}{1}}
\newcommand{\const}[1]{\expandafter{\ifcsname kyedconst#1\endcsname\csname kyedconst#1\endcsname
\else\errmessage{Undefined Kyedconstant #1.}\fi}}
\newenvironment{pdeq}{ \left\{ \begin{aligned}}{\end{aligned}\right.}
\newcommand{\np}[1]{(#1)}
\newcommand{\bp}[1]{\big(#1\big)}
\newcommand{\Bp}[1]{\bigg(#1\bigg)}
\newcommand{\BBp}[1]{\Bigg(#1\Bigg)}
\newcommand{\bbp}{\BBp}
\newcommand{\nb}[1]{[#1]}
\newcommand{\bb}[1]{\big[#1\big]}
\newcommand{\bbb}[1]{\Big[#1\Big]}
\newcommand{\Bb}[1]{\bigg[#1\bigg]}
\newcommand{\BBb}[1]{\Bigg[#1\Bigg]}
\newcommand{\calp}{{\mathcal P}}
\newcommand{\calt}{{\mathcal T}}
\newcommand{\R}{\mathbb{R}}
\newcommand{\C}{\mathbb{C}}
\newcommand{\Z}{\mathbb{Z}}
\newcommand{\N}{\mathbb{N}}
\DeclareMathOperator{\e}{e}
\newcommand{\B}{B}
\DeclareMathOperator{\Div}{div}
\DeclareMathOperator{\supp}{supp}
\newcommand{\inverse}{{-1}}
\newcommand{\ra}{\rightarrow}
\newcommand{\set}[1]{\ensuremath{\{#1\}}}
\newcommand{\setc}[2]{\ensuremath{\{#1\,\lvert\,#2\}}}
\newcommand{\setcl}[2]{\ensuremath{\bigl\{#1\,\big\lvert\, #2\bigr\}}}
\newcommand{\proj}{\calp}
\newcommand{\quotientmap}{\pi}
\newcommand{\torus}{{\mathbb T}}
\newcommand{\transpose}{\top}
\newcommand{\idmatrix}{I}
\newcommand{\grad}{\nabla}
\newcommand{\gradp}{\nabla^\prime}
\newcommand{\pt}{\partial_t}
\newcommand{\dx}{{\mathrm d}x}
\newcommand{\ds}{{\mathrm d}s}
\newcommand{\dt}{{\mathrm d}t}
\newcommand{\dy}{{\mathrm d}y}
\newcommand{\ft}[1]{\widehat{#1}}
\newcommand{\FT}{\mathscr{F}}
\newcommand{\iFT}{\mathscr{F}^{-1}}
\newcommand{\f}{f}
\newcommand{\g}{g}
\newcommand{\norm}[1]{\lVert#1\rVert}
\newcommand{\normL}[1]{\Bigl\lVert#1\Bigr\rVert}
\newcommand{\snorm}[1]{{\lvert #1 \rvert}}
\newcommand{\snormL}[1]{{\Bigl\lvert #1 \Big\rvert}}
\newcommand{\CR}[1]{C^{#1}}
\newcommand{\WSR}[2]{\mathrm{W}^{#1,#2}} 
\newcommand{\WSRN}[2]{\mathrm{W}^{#1,#2}_0}
\newcommand{\LR}[1]{\mathrm{L}^{#1}}
\newcommand{\CRi}{\CR \infty}
\newcommand{\CRci}{\CR \infty_0}
\newcommand{\LRcompl}[1]{L^{#1}_{\bot}} 
\newcommand{\vvel}{v}
\newcommand{\vpres}{p}
\newcommand{\Vvel}{V}
\newcommand{\wvel}{w}
\newcommand{\wpres}{\pi}
\newcommand{\twpres}{\tilde{\pi}}
\newcommand{\uvel}{u}
\newcommand{\upres}{\mathfrak{p}}
\newcommand{\uvels}{\uvel_{\mathrm{s}}}
\newcommand{\uveltp}{\uvel_{\mathrm{tp}}}
\newcommand{\upress}{\upres_{\mathrm{s}}}
\newcommand{\uprestp}{\upres_{\mathrm{tp}}}
\newcommand{\Uvel}{U}
\newcommand{\Upres}{\mathfrak{P}}
\newcommand{\tuvel}{\widetilde{u}}
\newcommand{\tupres}{\widetilde{\mathfrak{p}}}
\newcommand{\tg}{\tilde{g}}
\newcommand{\fluidstress}{\mathrm T}
\newcommand{\half}{\frac{1}{2}}
\renewcommand{\epsilon}{\varepsilon}
\renewcommand{\Theta}{\varTheta}
\renewcommand{\phi}{\varphi}
\newcommand{\tay}{\calt}
\newcommand{\per}{\tay}
\newcommand{\fs}{f_{\mathrm{s}}}
\newcommand{\ftp}{f_{\mathrm{tp}}}
\newcommand{\F}{F}
\DeclareMathOperator{\OA}{\mathcal{S}}
\newcommand{\vertiii}[1]{{\left\vert\kern-0.25ex\left\vert\kern-0.25ex\left\vert #1 \right\vert\kern-0.25ex\right\vert\kern-0.25ex\right\vert}}
\newcommand{\sesqform}{B}
\newcommand{\TDN}{\operatorname{Tr}_{0}}
\newcommand{\Bogovskii}{\mathcal{B}}
\newcommand{\FPM}{\mathcal{F}}
\newcommand{\torusn}{\mathbb{T}_0}
\newcommand{\torusnn}{\torusn^2}
\newcommand{\torust}{\torus\times\torusnn}
\newcommand{\Et}{{\mathrm{E}}}
\newcommand{\EMt}[2]{{\mathrm{E}}_{#1, #2}}
\newcommand{\Eeta}{\Et\circ\phi_\eta}
\newcommand{\EMeta}[2]{\EMt{#1}{#2}\circ\phi_\eta}
\newcommand{\Omegaet}{\Omega_{\eta}^\torus}
\newcommand{\Omegap}{\Omega_+}
\newcommand{\Omegal}{\Omega}
\newcommand{\QT}{\mathrm{Q}_\torus}
\newcommand{\Deltap}{\Delta^\prime}
\newcommand{\Bilaplace}{\Delta^{\prime \, 2}}
\newcommand{\dxp}{ \, {\mathrm d}x'}
\newcommand{\PR}{\mathcal{P}}
\newcommand{\oPR}{\mathcal{P}_\bot}
\newcommand{\h}{h}
\newcommand{\etaft}{\widehat{\eta}}
\newcommand{\etak}{\eta_k}
\newcommand{\etas}{\eta_s}
\newcommand{\vvelk}{\vvel_k}
\newcommand{\uvelk}{\uvel_k}
\newcommand{\upresk}{\upres_k}
\newcommand{\tupress}{\widetilde{\upres}_s}
\newcommand{\tupresk}{\widetilde{\upres}_k}
\newcommand{\etatp}{\eta_{\mathrm{tp}}}
\newcommand{\WSRD}[2]{\mathrm{\dot{W}}^{#1,#2}}
\newcommand{\WSRNN}[2]{\mathrm{W}^{#1,#2}_{(0)}}
\newcommand{\WSRcompl}[2]{\mathrm{W}^{#1,#2}_{\bot}}
\newcommand{\CRcicompl}{\CR \infty_{0, \bot}}
\newcommand{\VSRK}{\mathcal{V}_k}
\newcommand{\ESR}[1]{\mathrm{S}^{#1}}
\newcommand{\SLS}[1]{\mathcal X^{#1}}
\newcommand{\SScompl}[1]{\mathcal X^{#1}_{\bot}}
\newcommand{\SSsigmacompl}[1]{\mathcal X^{#1}_{\bot}}
\newcommand{\DAS}[1]{\mathcal Y^{#1}}
\newcommand{\DScompl}[1]{\mathcal Y^{#1}_{\bot}}
\newcommand{\YFS}{\mathcal{Y}^q_{(0)}}
\newcommand{\nsnl}[1]{(#1\cdot\grad) #1}
\newcommand{\ff}{f}
\newcommand{\ffs}{f_s}
\newcommand{\ffk}{f_k}
\newcommand{\ffcur}{\ff_\eta}
\newcommand{\tff}{\tilde{\ff}}
\newcommand{\pert}{L}
\newcommand{\normalvec}{\nu}
\newcommand{\xprime}{x'}
\newcommand{\xp}{\xprime}
\newcommand{\xip}{\xi^\prime}
\newcommand{\tRS}{\mathrm{R}_\eta}
\newcommand{\tRF}{\mathrm{R}_{\f}}
\newcommand{\RFtilde}{\tilde{\mathrm{R}}_{\f}}
\newcommand{\tRD}{\tilde{\mathrm{R}}_{d}}
\newcommand{\RD}{\mathrm{R}_d}
\newcommand{\muf}{\mu_f}
\newcommand{\mus}{\mu_s}
\newcommand{\fp}{h}
\newcommand{\tfp}{\tilde{h}}
\newcommand{\fps}{{h_s}}
\newcommand{\fpk}{h_k}
\newcommand{\qdf}{q_0}
\newcommand{\Mt}{M_t}
\newcommand{\Mx}{M_x}
\newcommand{\mx}{m_x}
\newcommand{\fr}{\tilde{f}}
\newcommand{\frr}{\mathfrak{f}}
\newcommand{\gr}{\tilde{g}}
\newcommand{\Seta}{{\mathrm{S}}_\eta}
\newcommand{\varepsilonn}{\varepsilon_0}
\renewcommand{\dx}{ \, {\mathrm d}x}
\renewcommand{\dt}{ \, {\mathrm d}t}
\renewcommand{\dxp}{ \, {\mathrm d}x'}
\renewcommand{\upres}{p}
\renewcommand{\vpres}{\mathfrak p}
\theoremstyle{plain}
\newtheorem{thm}{Theorem}[section]
\newtheorem{lem}[thm]{Lemma}
\theoremstyle{remark}
\newtheorem{rem}[thm]{Remark}
\renewcommand{\Upres}{P}
\renewcommand{\tupres}{\widetilde{p}}
\let\oldproof\proof
\def\proof{\CcSetCtr{c}{-1}\oldproof}
\begin{document}


\title{Fluid-plate interaction under periodic forcing}

\author{\textsc{Aday Celik}$^1$ and \textsc{Mads Kyed}$^{2,3}$\\[0.5em]
\textit{$^1$Fachbereich Mathematik}\\
\textit{Technische Universit\"at Darmstadt, Germany}\\
\href{mailto:celik@mathematik.tu-darmstadt.de}{celik@mathematik.tu-darmstadt.de}\\[0.5em]
\textit{$^2$Faculty of Science and Engineering}\\
\textit{Waseda University, Japan}\\
\href{mailto:kyed@aoni.waseda.jp}{kyed@aoni.waseda.jp}\\[0.5em]
\textit{$^3$Fachbereich Maschinenbau}\\
\textit{Flensburg University of Applied Sciences, Germany}\\
\href{mailto:mads.kyed@hs-flensburg.de}{mads.kyed@hs-flensburg.de}
}

\date{\today}
\maketitle

\begin{abstract}
The motion of a thin elastic plate interacting with a viscous fluid is investigated. 
A periodic force acting on the plate is considered, which in a setting without damping could lead to a resonant response.
The interaction with the viscous fluid provides a damping mechanism due to the energy dissipation in the fluid.
Moreover, an internal damping mechanism in the plate is introduced.
In this setting, we show that the periodic forcing leads to a time-periodic (non-resonant) solution.
We employ the Navier-Stokes and the Kirchhoff-Love plate equation
in a periodic cell structure to
model the motion of the viscous fluid and the elastic plate, respectively.
Maximal $\LR{q}$ regularity for the linearized system is established in a framework of time-periodic function spaces.
Existence of a solution to the fully nonlinear system is subsequently shown with a fixed-point argument.
\end{abstract}

\noindent\textbf{MSC2010:} 35Q30, 76D05, 74K20, 35B10, 35R35\\
\noindent\textbf{Keywords:} Navier-Stokes, plate equation, time-periodic solution.


\section{Introduction}

The interaction of an incompressible viscous fluid with a thin elastic structure is investigated. 
We consider a three-dimensional fluid filled container with an elastic plate as part of its boundary.
Our aim is to obtain a good understanding of the damping effects on the elastic structure in such a setting.
To this end, we introduce a periodic force, which in a setting without damping could lead to onset of resonance in the structure.
We develop a framework in which the damping effects under periodic forcing can be quantified, and employ the framework to show
how damping prevents resonance when the periodic force is sufficiently restricted in magnitude.

In order to simplify the mathematical analysis, we consider a simple geometry in which the container's stress free configuration is a cuboid with its bottom face an elastic plate.
We assume the motion of the viscous fluid is governed by the Navier-Stokes equations, and the motion of the plate by the Kirchoff-Love plate equations.
To further simplify the analysis, we impose periodic boundary conditions on the lateral faces of the cube.
In this setting we can study the linearized equations of motion directly in Fourier space, where we are able to quantify the damping effects in a precise manner.
From this characterization we are able to establish time-periodic a priori estimates, which we utilize to show that a solution to the nonlinear coupled fluid-structure system is necessarily time-periodic, that is, non-resonant, when the time-periodic force is sufficiently small.

In order to show the full mathematical potential of our technique, we have chosen to include an additional damping term in the structure equations that regularizes the system and enables us to establish comprehensive $\LR{q}$ estimates of maximal regularity type. Without this internal damping, our approach only yields a priori estimates in an $\LR{2}$ framework. We do not go further into this case in the current article. Further research is also required to extend our results to include more physically reasonable boundary conditions and geometries.

We denote by 
\begin{align*}
\omega:=(0,L)\times(0,L)\subset\R^2
\end{align*}
a square that represents a flat elastic plate in its stress free configuration, and consider the cuboid
\begin{align*}
\Omega\coloneqq\omega\times (0,1)\subset\R^3
\end{align*}
as container for the viscous fluid. More precisely, when no outer forces act on the system, the fluid filled cuboid container $\Omega$ with an elastic bottom face represents the stress free configuration of the fluid-structure system. As customary in fluid-structure problems, we utilize the stress free configuration  $\Omega$ as the \textit{reference configuration} and will refer to it as such.

Two $\per$-time-periodic outer body forces are introduced. A force 
\begin{align}\label{ForceFluid}
\ff:\R\times\Omega\ra\R^3, \quad \ff(t+\per,\cdot)=\ff(t,\cdot)
\end{align}
that acts on the fluid, and a force
\begin{align}\label{ForceStructure}
\fp:\R\times\omega\ra\R, \quad \fp(t+\per,\cdot)=\ff(t,\cdot)
\end{align}
that acts in normal direction on the plate (tangential forces are neglected in the Kirchoff-Love plate model). Here, $\R$ denotes the time axis. We consider generic forces formulated as functions on the reference configuration.

With outer forces acting on the system, the dynamics of the elastic structure is described by the displacement 
\begin{align*}
\eta:\R\times\omega\ra\R
\end{align*}
of the plate in normal direction $-e_3$. The \textit{current configuration} of the container at time $t$ is then
\begin{align*}
\Omega_\eta(t)\coloneqq \setc{x = \np{\xprime, x_3}\in\R^3}{x'\in\omega, \, -\eta(t,x') < x_3 < 1}.
\end{align*}
A canonical mapping that takes the reference configuration into the current configuration at time $t$ is given by
\begin{align}\label{Transformation}
\phi_\eta(t)\colon\Omega \to \Omega_\eta(t), \qquad x \mapsto \left( x', x_3 - \np{1-x_3}\eta(t,x')\right).
\end{align}
Provided $\eta$ is sufficiently small, $\phi_\eta(t)$ is a bijection with inverse given by
\begin{align*}
\phi_\eta^{-1}(t) \colon \Omega_\eta(t) \to \Omega, \qquad x \mapsto \left( x', \frac{x_3 + \eta(t,x')}{1 + \eta(t,\xprime)}\right).
\end{align*}

The dynamics of the incompressible viscous fluid flow is described in terms of its Eulerian velocity field and pressure term
\begin{align*}
\uvel: \bigcup_{t\in\R} \, \{t\}\times\Omega_{\eta}\np{t} \ra\R^3, \qquad \qquad
\upres: \bigcup_{t\in\R} \, \{t\}\times\Omega_{\eta}\np{t} \ra\R,
\end{align*}
respectively. We assume the fluid is Newtonian with Cauchy stress tensor given by 
\begin{align*}
\fluidstress(\uvel,\upres):=\muf\bp{\grad\uvel+\grad\uvel^\transpose}-\upres\idmatrix.
\end{align*}
Letting $\normalvec_t$ denote the outer normal of the container's current configuration $\Omega_\eta(t)$, 
we can thus express the surface force exerted by the fluid in normal direction $-e_3$ on the structure in reference configuration coordinates as
\begin{align*}
e_3\cdot\bp{\np{\fluidstress\np{\uvel, \upres}\normalvec_t}\circ\phi_\eta}_{| x_3= 0}.
\end{align*}

The Kirchoff-Love plate equation (with damping) govern the motion of the elastic structure:
\begin{align}\label{Plate}
\pt^2\eta + \Bilaplace\eta - \mus\Deltap\pt\eta &= \fp + e_3\cdot\bp{\np{\fluidstress\np{\uvel, \upres}\normalvec_t}\circ\phi_\eta}_{| x_3= 0}  \quad \tin\R\times\omega.
\end{align}
The term $\mus\Deltap\pt\eta$ with $\mus>0$ introduces an internal Kelvin-Voigt type damping in the structure. We use $\Deltap$ to denote the Laplacian with respect to the coordinates of the plate $\omega$.

The Navier-Stokes equations govern the motion of the incompressible viscous fluid:
\begin{align}\label{NSE}
\begin{pdeq}
\pt\uvel - \muf\Delta\uvel + \nsnl{\uvel} + \grad\upres &= \ffcur && \tin\bigcup_{t\in\R} \, \{t\}\times\Omega_{\eta}\np{t}, \\
\Div\uvel &= 0 && \tin\bigcup_{t\in\R} \, \{t\}\times\Omega_{\eta}\np{t}.
\end{pdeq}
\end{align}
Here, $\ffcur:=\ff\circ\phi_\eta^\inverse$ denotes the outer force $\ff$ expressed in current configuration coordinates, and
$\muf>0$ the kinematic viscosity constant. 

We assume a no-slip boundary condition for the fluid velocity on both the top and bottom (elastic) face of the container:
\begin{align}\label{BoundaryNormal}
\begin{aligned}
&\uvel  = 0 &&\ton\R\times\omega\times\{1\},\\
&\uvel\circ\phi_\eta = -\pt\eta\, e_3 &&\ton\R\times\omega\times\{0\}.
\end{aligned}
\end{align}
We assume periodic boundary conditions on the lateral faces of the container, that is, 
\begin{align}\label{BoundaryLateral}
\begin{aligned}
&\eta(\cdot,x_1+L,\cdot) = \eta(\cdot,x_1,\cdot), &&
\eta(\cdot,\cdot,x_2+L) = \eta(\cdot,\cdot,x_2),\\
&\uvel(\cdot,x_1+L,\cdot,\cdot) = \uvel(\cdot,x_1,\cdot,\cdot), &&
\uvel(\cdot,\cdot,x_2+L,\cdot) = \uvel(\cdot,\cdot,x_2,\cdot).
\end{aligned}
\end{align}
Moreover, we augment the system with the volume constraint
\begin{align}\label{VolConstraint}
\int_{\omega} \eta\np{t, \xprime} \dxp = 0.
\end{align}

We investigate the conditions under which the system \eqref{Plate}--\eqref{VolConstraint} admits a $\per$-time-periodic (non-resonant) solution under $\per$-time-periodic forcing
\eqref{ForceFluid}--\eqref{ForceStructure}. For this purpose, it is convenient to reformulate the system in a setting where the time-axis $\R$ is replaced with the torus $\torus = \R/\per\Z$. In such
a setting all function are generically $\per$-time-periodic. By the same token, we introduce the torus $\torusnn:=\np{\R/\pert\Z}^2$ and incorporate the periodic boundary conditions \eqref{BoundaryLateral} into the setting by replacing the elastic square $\omega$  and the cuboid container $\Omega$ with (we keep the notation $\omega$ and $\Omega$)
\begin{align*}
\omega:=\torusnn \qquad\tand \qquad \Omega:=\torusnn\times (0,1),
\end{align*}
respectively. We denote the corresponding time-space current configuration by
\begin{align*}
\Omegaet \coloneqq \bigcup_{t\in\torus} \, \{t\}\times\Omega_{\eta}\np{t}.
\end{align*}
Summarizing, we identify $\per$-time-periodic solutions $\np{\uvel, \upres, \eta}$ to \eqref{Plate}--\eqref{VolConstraint} as solutions to
\begin{align}\label{CoupledSystem}
\begin{pdeq}
\pt^2\eta + \Bilaplace\eta - \mus\Deltap\pt\eta &= \fp + e_3\cdot\bp{\np{\fluidstress\np{\uvel, \upres}\normalvec_t}\circ\phi_\eta}_{| x_3= 0} && \tin\torus\times\omega, \\
\pt\uvel - \muf\Delta\uvel + \nsnl{\uvel} + \grad\upres &= \ff\circ\phi_\eta^\inverse && \tin\Omegaet, \\
\Div\uvel &= 0 && \tin\Omegaet, \\
\uvel(t,x', -\eta(t,x')) &= -\pt\eta(t,x') e_3 && \ton\torus\times\omega, \\
\uvel(t,\xprime, 1) &= 0 && \ton\torus\times\omega,\\
\int_{\omega} \eta\np{t, \xprime} \dxp &= 0. &&
\end{pdeq}
\end{align}

As the main result of the article we establish existence of a solution to \eqref{CoupledSystem} under a smallness condition on the data:

\begin{thm}\label{mainThm}
Let $q\in (2,\infty)$. There is an $\varepsilon>0$ such that for all
\begin{align*}
\np{\ff, \fp} \in \LR{q}\np{\torus\times\Omega}^3 \times \LR{q}\np{\torus; \WSR{1-\frac{1}{q}}{q}\np\torusnn},
\end{align*}
satisfying 
\begin{align*}
\norm{\ff}_{\LR{q}\np{\torus\times\Omega}} + \norm{\fp}_{\LR{q}\np{\WSR{1-1/q}{q}\np\torusnn}} \leq \varepsilon
\end{align*}
there is a solution $\np{\eta,\uvel,\upres}$ to \eqref{CoupledSystem} satisfying
\begin{align*}
&\eta\in\WSR{2}{q}\bp{\torus; \WSR{1-\frac{1}{q}}{q}\np{\torusnn}}\cap \LR{q}\bp{\torus; \WSR{5-\frac{1}{q}}{q}\np{\torusnn}},\\
&\uvel\circ\phi_\eta\in \WSR{1}{q}\bp{\torus; \LR{q}\np\Omega}^3\cap\LR{q}\bp{\torus; \WSR{2}{q}\np\Omega}^3,\\
&\upres\circ\phi_\eta\in \LR{q}\np{\torus; \WSR{1}{q}\np{\Omega}}.
\end{align*}
\end{thm}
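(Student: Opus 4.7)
The plan is to transform the moving-domain problem to the fixed reference configuration, establish maximal $\LR{q}$ regularity for the resulting linearized system in the time-periodic setting, and conclude via a contraction mapping argument under the smallness assumption.

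First, I would apply the change of variables $\phi_\eta$ to pull back the Navier-Stokes equations onto the fixed cuboid $\torus \times \Omega$. Setting $\tuvel \ceq \uvel \circ \phi_\eta$ and $\tupres \ceq \upres \circ \phi_\eta$, the chain rule produces a principal linear part $\pt \tuvel - \muf \Delta \tuvel + \grad \tupres$ together with a remainder collecting all $\eta$-dependent corrections (the convective term $\nsnl{\tuvel}$, the deformation-gradient contributions arising from $\grad \phi_\eta$ and $\cofctrafo$, and the transformed incompressibility constraint of the form $\Div(\cofctrafo\tuvel) = 0$). Similarly, the boundary coupling $e_3\cdot\bp{\np{\fluidstress\np{\uvel,\upres}\normalvec_t}\circ\phi_\eta}_{|x_3=0}$ splits into the linear Stokes trace on the flat bottom plus an $\eta$-dependent remainder. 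The coupled system \eqref{CoupledSystem} then takes the schematic form
\begin{equation*}
\call(\eta,\tuvel,\tupres) = \np{\fp, \ff, 0, 0, 0, 0} + \calr(\eta, \tuvel, \tupres),
\end{equation*}
where $\call$ is the linear coupled plate-Stokes operator on $\torus\times\Omega$ and $\calr$ gathers all nonlinear and quasilinear contributions.

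The central task is then to prove maximal $\LR{q}$ regularity for $\call$ between the function spaces appearing in the theorem. Since $\torus$ is a compact abelian group with dual $\Zgrp$, I would decompose in time-frequency and work mode by mode: on each frequency $k \in \Zgrp$ the coupled system reduces to a generalized resolvent problem, namely a complex Stokes system on $\Omega$ with nonhomogeneous Dirichlet data prescribed by the plate, coupled to the Kirchhoff-Love equation whose right-hand side contains the fluid stress trace. Expanding further in the spatial Fourier variable $\xi$ dual to $\torusnn$, one obtains an explicit symbol; the Kelvin-Voigt damping $-\mus\Deltap\pt\eta$ ensures that the characteristic expression $-k^2 + |\xi|^4 + i\mus k|\xi|^2$ of the isolated plate equation never vanishes (its imaginary part is nonzero when $k\neq 0$ and its real part is positive when $k=0$, $\xi\neq 0$), while the single remaining zero mode $(k,\xi)=(0,0)$ is eliminated by the volume constraint \eqref{VolConstraint}. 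A transference argument of de Leeuw type, combined with $\calr$-boundedness of the associated operator families on each fluid-column problem, then yields an estimate
\begin{equation*}
\norm{(\eta, \tuvel, \tupres)}_{\caly^q} \leq \Cc{C} \norm{(\fp, \ff)}_{\calx^q},
\end{equation*}
where $\calx^q$ and $\caly^q$ denote the data and solution spaces of the theorem.

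Finally, I would close the argument with a Banach fixed-point iteration. For data of size $\varepsilon$, define
\begin{equation*}
\calf : (\oeta, \ouvel, \oupres) \mapsto \call^{-1}\bp{\np{\fp, \ff, 0, 0, 0, 0} + \calr(\oeta, \ouvel, \oupres)}
\end{equation*}
on a ball of radius $r = \bigo(\varepsilon)$ in $\caly^q$. Using standard product and trace estimates in Sobolev-Slobodeckij spaces, together with the embedding afforded by $q>2$ which supplies the algebra-type properties and Lipschitz continuity of $\phi_\eta$ and of $\phi_\eta^{-1}$ (uniform in $t\in\torus$ for $\eta$ in a small ball), one checks that $\calr$ is quadratically small: $\norm{\calr(\oeta, \ouvel, \oupres)}_{\calx^q} \leq \Cc{C} r^2$. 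Choosing $\varepsilon$ sufficiently small then makes $\calf$ both a self-map and a contraction on the ball, and its unique fixed point is the desired solution. The main obstacle I anticipate is the maximal regularity step: the Stokes-plate symbol must be analyzed uniformly in all frequencies $(k,\xi)$, the pressure introduces a nonlocal zero-average constraint that must be tracked carefully through the transference, and the matching between the fourth-order plate operator and the second-order fluid operator dictates precisely the asymmetric anisotropic regularity $\WSR{2}{q}\np{\torus;\WSR{1-1/q}{q}\np\torusnn}\cap \LR{q}\np{\torus;\WSR{5-1/q}{q}\np\torusnn}$ announced for $\eta$.
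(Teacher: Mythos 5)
Your overall architecture (transform to the reference configuration, prove maximal $\LR{q}$ regularity for the coupled plate--Stokes linearization in the time-periodic framework, then close with a contraction in a small ball) matches the paper, as does the identification of the Kelvin--Voigt term as the key to a nondegenerate symbol and of the volume constraint as eliminating the zero mode. The maximal regularity step, however, is where the proposal diverges from what the paper actually does and where it is too schematic to stand on its own. You invoke ``$\calr$-boundedness of the associated operator families'' and a de Leeuw transference; the paper does not take the operator-valued ($\calr$-bounded) route at all. Instead it solves the resolvent ODE \emph{explicitly on the half-space} $\Omega_+=\torusnn\times\R_+$, which yields the closed-form scalar symbol
\begin{equation*}
\snorm{\xip}^4 - k^2 + ik\snorm{\xip}^2  - \frac{k^2}{\snorm{\xip}} + ik\Bp{\snorm{\xip} + \sqrt{\snorm{\xip}^2 + ik}}
\end{equation*}
in the denominator (not merely $-k^2+\snorm{\xip}^4+ik\snorm{\xip}^2$ — the viscous-fluid trace contributes two more damping terms), and then verifies the Marcinkiewicz condition for a continuous extension of that symbol, appealing to de Leeuw to pass from $\R\times\R^2$ back to $\frac{2\pi}{\per}\Z\times(\frac{2\pi}{\pert}\Z)^2$.

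The more serious gap is the passage from the half-space to the bounded layer $\Omega=\torusnn\times(0,1)$. You propose to get the explicit symbol directly ``on $\Omega$'' after Fourier-transforming in $\xip$, but on the layer the $x_3$-ODE carries boundary conditions at both $x_3=0$ and $x_3=1$, so the fundamental solution is no longer a pure decaying exponential and the resulting symbol is far messier; moreover the pressure is determined only up to a constant, and fixing it in an $\LR{q}$-compatible way is delicate. The paper deals with this via a separate, non-trivial localization argument (even/odd extensions of $\uvel$ and $\upres$ to the half-space, a Bogovski\u{\i} correction to restore the divergence constraint, and a dedicated lemma estimating $\upres$ through a weak Robin/Neumann problem so the lower-order pressure term can be absorbed), followed by a compactness-uniqueness contradiction argument to remove the residual lower-order terms. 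None of this is touched by your proposal, and it is precisely where the main technical effort of the linear theory lies. Also note that the linear analysis in the paper requires a preliminary steady-state/purely-oscillatory splitting, an $\LR{2}$ Lax--Milgram existence step for the resolvent problem, and a duality-based uniqueness lemma before any $\LR{q}$ estimate can be bootstrapped; your proposal jumps straight to the $\LR{q}$ estimate without securing existence and uniqueness of the object being estimated.
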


The main novelty of the article, however, concerns the technique we introduce to establish a priori estimates of the corresponding linearization:
\begin{align}\label{LinearizedSystem}
\begin{pdeq}
\pt^2\eta + \Bilaplace\eta - \mus\Deltap\pt\eta &= \fp -e_3\cdot\bp{\fluidstress(\uvel,\upres){e_3}}_{| x_3= 0} && \tin\torus\times\torusn^2, \\
\pt\uvel - \muf\Delta\uvel + \grad\upres &= \ff && \tin\torus\times\Omega, \\
\Div\uvel &= \g && \tin\torus\times\Omega, \\
\uvel\np{t, \xprime, 0} &= -\pt\eta\np{t, \xprime} e_3 && \ton\torus\times\torusn^2, \\
\uvel\np{t, \xprime, 1} &= 0 && \ton\torus\times\torusnn,\\
\int_{\torusnn} \eta\np{t, \xprime} \dxp &= 0. &&
\end{pdeq}
\end{align}
More specifically, \eqref{LinearizedSystem} is obtained as the linearization of \eqref{CoupledSystem} by reformulating the fluid equations 
\eqrefsub{CoupledSystem}{2-3} in the reference configuration and subsequently neglecting all higher order terms in $\np{\eta,\uvel,\upres}$. Moreover, an inhomogeneous right-hand side $g$ is introduced in
\eqrefsub{LinearizedSystem}{3}, which is critical in the utilization of \eqref{LinearizedSystem} towards the resolution of \eqref{CoupledSystem}.
We establish the following a priori $\per$-time-periodic estimates of \eqref{LinearizedSystem}:
\begin{thm}\label{MainThmFSLin}
Let $q\in(1,\infty)$. For all $\np{\ff, \g, \fp}\in\DAS{q}\np{\torus\times\Omega}$ with
\begin{align}\label{YSpaceDef}
\begin{aligned}
\DAS{q}\np{\torus\times\Omega}\coloneqq &\LR{q}\np{\torus\times\Omega}^3 \\
&\times \LR{q}\bp{\torus; \WSR{1}{q}\np\Omega}\cap\WSR{1}{q}\bp{\torus; \WSRD{-1}{q}\np\Omega}\\
& \times \LR{q}\bp{\torus; \WSR{1-\frac{1}{q}}{q}\np\torusnn}
\end{aligned}
\end{align}
satisfying
\begin{align*}
\int_\Omega \g \dx = 0,
\end{align*}
there is a unique solution $\np{\uvel, \upres, \eta}\in\SLS{q}\np{\torus\times\Omega}$ to \eqref{LinearizedSystem} with
\begin{align}\label{XSpaceDef}
\begin{aligned}
\SLS{q}\np{\torus\times\Omega}\coloneqq &\WSR{1}{q}\bp{\torus; \LR{q}\np\Omega}^3\cap\LR{q}\bp{\torus; \WSR{2}{q}\np\Omega}^3\\
&\times \LR{q}\np{\torus; \WSR{1}{q}\np{\Omega}}\\
&\times \WSR{2}{q}\bp{\torus; \WSR{1-\frac{1}{q}}{q}\np{\torusnn}}\cap \LR{q}\bp{\torus; \WSR{5-\frac{1}{q}}{q}\np{\torusnn}}.
\end{aligned}
\end{align}
Moreover,
\begin{align}\label{FSLinInhomEst}
\begin{aligned}
\norm{\np{\uvel, \upres, \eta}}_{\SLS{q}\np{\torus\times\Omega}} \leq \Cc{C}\,\norm{\np{\ff, \g, \fp}}_{\DAS{q}\np{\torus\times\Omega}}
\end{aligned}
\end{align}
with $\Cclast{C} = \Cclast{C}(q, \per)>0$.
\end{thm}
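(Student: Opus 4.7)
The plan is to exploit the translation invariance of the linear system \eqref{LinearizedSystem} in the time variable $t\in\torus$ and the lateral space variable $x'\in\torusnn$, and to reduce the problem to a family of boundary value problems on the interval $x_3\in(0,1)$ parametrized by the dual frequency $(k,\xi')\in\Zgrp\times\np{\frac{2\pi}{L}\Z}^2$. After applying Fourier series in $(t,x')$, the Stokes system becomes a linear ODE in $x_3$ of constant coefficients with the no-slip condition at $x_3=1$ and the boundary condition $\widehat{\uvel}\np{k,\xi',0}=-ik\,\widehat\eta\np{k,\xi'}\,\ethree$ at $x_3=0$. Simultaneously, the plate equation reduces to the algebraic relation
\begin{align*}
\bp{-k^2 + \snorm{\xi'}^4 + i\mus k\snorm{\xi'}^2}\widehat\eta\np{k,\xi'} = \widehat{\fp}\np{k,\xi'} - \ethree\cdot\bp{\fluidstress(\widehat{\uvel},\widehat{\upres})\ethree}_{|x_3=0}\np{k,\xi'}.
\end{align*}
The boundary trace on the right-hand side depends linearly on $\widehat\eta$ through the solution of the Stokes ODE, so substitution yields a scalar equation for $\widehat\eta$ of the form $\Nenner\np{k,\xi'}\,\widehat\eta = \widehat{\fp}+\textup{(explicit contributions from $\ff$ and $\g$)}$.

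Before this Fourier reduction, two preparatory steps will be needed. First, the inhomogeneous divergence condition $\Div\uvel=\g$ will be removed by subtracting a Bogovskii-type lift adapted to the time-periodic space $\LR{q}\np{\torus;\WSR{1}{q}\np\Omega}\cap\WSR{1}{q}\np{\torus;\WSRD{-1}{q}\np\Omega}$; the compatibility condition $\int_\Omega\g\dx=0$ is precisely what guarantees such a lift exists with the required regularity. Second, the zero-frequency mode $(k,\xi')=(0,0)$ must be treated separately: the volume constraint $\int\eta=0$ handles the $\xi'=0$ mean of $\eta$, while the Dirichlet condition at $x_3=1$ combined with the incompressibility constraint pins down the corresponding velocity profile. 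With these reductions in place, the remaining modes are solved explicitly by inverting the Stokes ODE and substituting into the plate relation, and uniqueness in $\SLS{q}$ follows from the injectivity of the resulting symbol on every mode.

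The main obstacle will be the symbol analysis. The denominator $\Nenner\np{k,\xi'}$, obtained by eliminating $\widehat{\uvel}$ and $\widehat{\upres}$ in favor of $\widehat\eta$, is a transcendental combination of the plate symbol $-k^2+\snorm{\xi'}^4+i\mus k\snorm{\xi'}^2$ with the Dirichlet-to-Neumann map of the Stokes operator on the slab $(0,1)$ at frequency $(k,\xi')$, and is not homogeneous of any fixed degree. To obtain maximal $\LR{q}$ regularity, pointwise bounds on $\Nenner^{-1}$ and on all lifted symbols for $\widehat{\uvel},\widehat{\upres},\widehat\eta$, together with their weighted discrete derivatives in $(k,\xi')$ of sufficient order, must be verified separately in the parabolic regime $\snorm{k}\sim\snorm{\xi'}^2$, in the plate-dominated regime $\snorm{k}\sim\snorm{\xi'}^4$, and in the balancing intermediate ranges. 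The crucial feature is that both the fluid viscosity $\muf$ and the Kelvin--Voigt damping $\mus$ contribute strictly positive dissipative contributions that keep $\realpart\Nenner$ or $\impart\Nenner$ bounded away from zero uniformly in $(k,\xi')\neq 0$, which encodes the non-resonance. Once these symbol estimates are in place, a transference principle for the compact abelian group $\torus\times\torusnn$ combined with an $\mathcal{R}$-bounded Mikhlin-type multiplier theorem on $\R\times\R^2$ yields the estimate \eqref{FSLinInhomEst} in the required norms of $\SLS{q}$ and $\DAS{q}$.
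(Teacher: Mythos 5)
Your proposal differs from the paper's proof in a crucial structural respect: you propose to carry out the explicit Fourier symbol analysis directly on the cuboid $\Omega=\torusnn\times(0,1)$, reducing the coupled system to a two-point boundary-value problem in $x_3\in(0,1)$ and then eliminating $\widehat{\uvel},\widehat{\upres}$ via the slab's Stokes Dirichlet-to-Neumann map. The paper deliberately avoids this. It performs the explicit computation in the half-space $\Omegap=\torusnn\times\R_+$, where the Stokes ODE has only the decaying fundamental solutions $e^{-|\xi'|x_3}$ and $e^{-\sqrt{|\xi'|^2+ik}\,x_3}$, so the resulting symbol \eqref{POP_FS_SolFormulaMitEta} is a closed-form algebraic-plus-radical expression to which Marcinkiewicz's theorem and de Leeuw's transference apply cleanly. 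The bounded-domain estimate is then recovered by extension to the half-space, an intermediate pressure estimate (Lemma~\ref{LemPressureFieldEst}), and a compactness--contradiction argument that absorbs the lower-order terms produced by the cut-off (Lemma~\ref{LemLqEstPeriLayer}). The paper also splits off the whole steady-state part $k=0$ rather than merely the single mode $(k,\xi')=(0,0)$; this is forced on it because the Lax--Milgram coercivity for the resolvent problem \eqref{FSLinHomDataResolvent} requires $k\neq 0$.

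The genuine gap in your proposal is the step you yourself identify as ``the main obstacle.'' On the slab the Stokes ODE has boundary conditions at both $x_3=0$ and $x_3=1$, so its solution involves both growing and decaying exponentials, equivalently $\sinh$ and $\cosh$ of $\sqrt{|\xi'|^2+ik}\,x_3$ and $|\xi'|x_3$; the resulting Dirichlet-to-Neumann map and hence your denominator $\Nenner(k,\xi')$ is a ratio of hyperbolic functions with no simple closed form. Verifying the pointwise non-degeneracy of $\Nenner$ and the required Marcinkiewicz derivative bounds for $\Nenner^{-1}$ and for all lifted velocity and pressure symbols --- uniformly across the parabolic, plate-dominated, intermediate, and, importantly, the degenerate small-$|\xi'|$ regimes where the hyperbolic factors flatten --- is precisely what the paper's half-space detour is designed to circumvent, at the cost of the localization machinery of Lemmas~\ref{LemPressureFieldEst}--\ref{LemLqEstPeriLayer}. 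Your route is not wrong in principle, but it is abandoned at its hardest point and there is no indication it would be simpler than the published argument. You would also need to justify that a solution in $\SLS{q}$ is in fact represented mode-by-mode by your formula before concluding uniqueness; the paper secures existence and uniqueness separately, via Lax--Milgram in an $\LR{2}$ framework and a duality argument (Lemmas~\ref{ResolventLaxMilgramLem}--\ref{FSLinUniquenessLem}), a step your sketch glosses over.
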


We solve \eqref{LinearizedSystem} via a representation formula in which the damping effect in the structure of both the fluid force $e_3\cdot\fluidstress(\uvel,\upres)e_3$ an the internal damping $\mus\Deltap\pt\eta$ are quantified in the Fourier space with respect to the Fourier transform $\FT_{\torus\times\torusn^2}$; see Remark \ref{DampingEffectInFourierSpace}. From this characterization we obtain the a priori estimate \eqref{FSLinInhomEst} via a transference principle
for Fourier multipliers.
We first establish the estimate in a half-space setting, and subsequently employ a (non-trivial) localization.

The coupling of an incompressible viscous fluid with an elastic plate has previously been investigated in
\cite{DEGT2000, GM2000, DE98, FO99, FSDesjardins05, VeigaFluidS, DS19, GaldiMahdi2014,VeigaFluidS,FSDesjardins05,ChueshovRyzhkova2013,GrandmontElasticPlate}.
Most of these articles cover the corresponding initial-value problem. 
To our knowledge, the investigation of time-periodic solutions in an $\LR{q}$ setting to the fully non-linear problem \eqref{CoupledSystem} is new.

\section{Preliminaries}

The periodic time-space domain $\torus\times\torusnn\times\R$ with $\torus := \R/\per\Z$ and $\torusnn:=\np{\R/\pert\Z}^2$ inherits its differentiable structure as a manifold from the physical time-space domain $\R\times\R^2\times\R$
via the quotient mappings $\quotientmap:\R\ra\torus$ and $\quotientmap_0:\R^2\ra\torusnn$. 
It is easy to verify that the reformulation \eqref{CoupledSystem} in the subdomain $\torus\times\torusnn\times (0,1)$ of $\torus\times\torusnn\times\R$ is equivalent to the original system \eqref{Plate}--\eqref{VolConstraint} defined in the physical time-space domain.
Sobolev spaces defined on subdomains of $\torus\times\torusnn\times\R$, such as those appearing in Theorem \ref{mainThm} and \ref{MainThmFSLin}, can be defined analogously to classical
Sobolev spaces. We refer to \cite{CelikKyed_StokesHalfSpace} for a systematic approach.

We take advantage of the structure of $\torus\times\torusnn$ as a compact abelian group (with normalized Haar measure) and utilize the corresponding Fourier transform $\FT_{\torus\times\torusnn}$.
We identify the dual group of $\torus\times\torusnn$ as $\frac{2\pi}{\per}\Z\times\np{\frac{2\pi}{\pert}\Z}^2$, and
use $(k,\xi)\in\frac{2\pi}{\per}\Z\times\np{\frac{2\pi}{\pert}\Z}^2$ as canonical notation for its elements. Formally, the Fourier transform takes the following form on functions
$\uvel:\torus\times\torusnn\ra\R$:
\begin{align*}
\ft{\uvel}(k,\xi) \coloneqq \FT_{\torus\times\torusnn}\nb{\uvel}(k,\xi) \coloneqq \int_\torus\int_{\torusnn} \uvel(t,x)\,\e^{-ix\cdot\xi-ik t}\,\dx\dt.
\end{align*}
The Fourier transform $\FT_{\torus\times\torusnn}$ can be expressed as the composition of the Fourier transforms $\FT_\torus$ and $\FT_{\torusnn}$, which takes functions defined on
$\torus$ and $\torusnn$ into their Fourier coefficients with respect to indices $k\in\frac{2\pi}{\per}\Z$ and $\xi\in\np{\frac{2\pi}{\pert}\Z}^2$.
At one point in the following, namely when we establish the $\LR{q}$ estimates in Theorem \ref{MainThmFSLin}, it is critical that a single Fourier transform $\FT_{\torus\times\torusnn}$ covering the whole time-space domain is employed instead of a sequential utilization of $\FT_\torus$ and $\FT_{\torusnn}$.

For functions $f$ defined on $\torus\times\torusnn\times\R$ we define
\begin{equation}\label{Projections}
\fs\coloneqq\PR f (t,\cdot) := \int_\torus f(s,\cdot)\,\ds,\quad \ftp\coloneqq\oPR f(t,\cdot):= f(t,\cdot)-\proj f(t,\cdot)
\end{equation}
whenever the integral is well defined. Since $\fs$ is independent of time $t$, we shall implicitly treat $\fs$ as a function 
in the spatial variable $x$ only and refer to it as the \emph{steady-state}  part of $\f$.
The function $\ftp$ is referred to as the \emph{purely oscillatory} part of $\f$.
When using the projections $\PR$ and $\oPR$ to decompose a function space, we use the symbol $\bot$ as subscript to denote the purely oscillatory part, for example
$\LRcompl{q}\np{\torus\times\Omega} := \oPR\LR{q}\np{\torus\times\Omega}$.

Finally, we observe that the divergence problem
\begin{align}\label{DivProblem}
\begin{pdeq}
\Div\uvel &= \f && \tin\Omega, \\
\uvel &= 0 && \ton\partial\Omega
\end{pdeq}
\end{align}
set in the torus domain $\Omega\coloneqq \torusnn\times (0,1)$ possesses the same properties as the corresponding divergence problem set in classical subdomains of $\R^3$. 
As in \cite[Section III.3]{Galdi}, a so-called Bogovski\u{\i} operator $\Bogovskii\colon\CRci\np\Omega \to \CRci\np\Omega^3$ can be constructed such that $\uvel:=\Bogovskii(f)$ satisfies \eqref{DivProblem} whenever 
$\f$ satisfies
\begin{align}\label{BogovskiiMV}
\int_\Omega \f \dx = 0.
\end{align}

\begin{thm}[Bogovski\u{\i} Operator]\label{BogovskiiThm}
The Bogovski\u{\i} operator $\Bogovskii\colon\CRci\np\Omega\to\CRci\np\Omega^3$ has
a continuous (linear) extension $\Bogovskii\colon\WSRN{m}{q}\np\Omega\to\WSRN{m+1}{q}\np\Omega^3$ for $q\in (1,\infty)$ and $m\in\N_0$.
If $\f\in\WSRN{m}{q}\np\Omega$ satisfies
\eqref{BogovskiiMV}, then $\uvel := \Bogovskii\f$ is a solution
to \eqref{DivProblem} satisfying
\begin{align}\label{BogovskiiEst1}
\norm{\grad\uvel}_{l,q}\leq\Cc{C}\norm\f_{l,q},
\end{align}
for all $l = 0, \ldots, m$. Moreover, there exists a constant
$\Cc{C} = \Cclast{C}\np{n, q, \Omega}>0$ such that
\begin{align}\label{BogovskiiLqEst}
\norm{\uvel}_{\LR{q}\np\Omega} \leq \Ccn{C}\snorm{\f}_{-1, q}^*,
\end{align}
where 
\begin{align}\label{BogovskiiSeminorm}
\snorm{\f}_{-1, q}^* = \sup_{\phi\in\WSRD{1}{q'}\np\Omega; \; \snorm{\phi}_{1, q'} = 1} \snorm{(\f, \phi)}
\end{align}
for all $\f\in\LR{q}\np\Omega$.
\end{thm}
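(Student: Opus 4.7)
The plan is to mimic the classical Bogovski\u{\i} construction of \cite[Section~III.3]{Galdi}, adapted to the periodic-cell geometry of $\Omega = \torusnn\times(0,1)$. Since the lateral faces of the cuboid are identified through the torus structure, the topological boundary of $\Omega$ is only $\torusnn\times\set{0,1}$, so the Dirichlet condition in \eqref{DivProblem} is non-trivial only on the top and bottom faces; away from this the standard construction proceeds as on a bounded Lipschitz domain in $\R^3$.

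I would first split the datum as $\f=\f_0+\tilde{\f}$, with $\f_0(x_3)\coloneqq\frac{1}{L^2}\int_{\torusnn}\f(x',x_3)\dxp$ independent of $x'$ and $\tilde{\f}\coloneqq\f-\f_0$ of zero $x'$-mean for each $x_3$. The zero-mean condition \eqref{BogovskiiMV} reduces to $\int_0^1\f_0\,{\mathrm d}x_3=0$, and the one-dimensional problem is solved explicitly by $u_3^0(x_3)\coloneqq\int_0^{x_3}\f_0(s)\,{\mathrm d}s$ with the tangential components set to zero. For $\tilde{\f}$ I would cover $\Omega$ by finitely many bounded open sets $U_1,\ldots,U_N$, each star-shaped with respect to a ball and whose union modulo the periodic identification equals $\Omega$; localize $\tilde{\f}$ via a subordinate partition of unity, redistributing mass between adjacent patches to preserve the zero-mean condition on each $U_i$; and apply the explicit Bogovski\u{\i} convolution formula of \cite[Lemma~III.3.1]{Galdi} on each patch. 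Summing the local pieces and adding $u_3^0\,e_3$ defines $\Bogovskii\f$.

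The Sobolev estimate \eqref{BogovskiiEst1} then follows as in \cite[Theorem~III.3.2]{Galdi}: $\grad\Bogovskii$ is a sum of Calder\'on--Zygmund singular integrals plus lower-order smoothing terms, hence bounded on $\LR{q}\np\Omega$ for $q\in(1,\infty)$, and the higher-order bounds $\WSRN{m}{q}\to\WSRN{m+1}{q}$ follow by differentiating under the integral sign. The negative-norm estimate \eqref{BogovskiiLqEst} follows by duality, mirroring \cite[Theorem~III.3.4]{Galdi}: for $\psi\in\LR{q'}\np\Omega^3$, rewriting $\np{\Bogovskii\f,\psi}$ via the explicit kernel produces an identity $\np{\Bogovskii\f,\psi}=\np{\f,\calk\psi}$ with $\calk\colon\LR{q'}\np\Omega^3\to\WSRD{1}{q'}\np\Omega$ bounded, so that $\labs{\np{\Bogovskii\f,\psi}}\le\snorm{\f}^*_{-1,q}\,\snorm{\calk\psi}_{1,q'}\le C\,\snorm{\f}^*_{-1,q}\,\norm{\psi}_{\LR{q'}}$, and \eqref{BogovskiiLqEst} is obtained by taking the supremum over $\psi$.

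The main obstacle I foresee is ensuring that the locally constructed pieces assemble into a function that is genuinely periodic in $x'$ on $\Omega$. I would address this by carrying out the cover $\set{U_i}$ and the partition of unity on the universal lift $\R^2\times(0,1)$ equivariantly under the period lattice $L\Z^2$, and then descending to $\Omega$ through the quotient map; this guarantees that the resulting Bogovski\u{\i} field is periodic in $x'$ while retaining the zero Dirichlet trace on $\torusnn\times\set{0,1}$. The mass-balancing between adjacent patches needed to preserve the zero-mean condition on each localized piece is then a combinatorial redistribution identical to the one in the classical setting.
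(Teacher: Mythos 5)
Your proposal is correct and follows the same strategy as the paper, which disposes of the statement in one line by asserting that the classical Bogovski\u{\i} construction of \cite[Section III.3]{Galdi} adapts to the periodic cell $\Omega=\torusnn\times(0,1)$ without significant modification and then cites \cite[Theorems III.3.3 and III.3.5]{Galdi} for \eqref{BogovskiiEst1} and \eqref{BogovskiiLqEst}, respectively. Your additional devices — the splitting $\f=\f_0+\tilde\f$ with the explicit one–dimensional solve for the $x'$-average and the equivariant lift of the star-shaped cover to $\R^2\times(0,1)$ — are a reasonable (though not the only, and not strictly necessary) way to make that adaptation precise, and they lead to the same estimates by the same Calder\'on–Zygmund and duality arguments the paper invokes via \cite{Galdi}.
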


\begin{proof}
The classical construction of the Bogovski\u{\i} operator can be adapted to the domain $\Omega$ without significant modifications; see for example \cite[Section III.3]{Galdi}.  
The first part of the theorem can be shown as in \cite[Theorem III.3.3]{Galdi}.
Estimate \eqref{BogovskiiLqEst} follows as in the proof of \cite[Theorem III.3.5]{Galdi}.
\end{proof}

\section{Linearized system}

We employ the projections $\PR$ and $\oPR$ introduced in \eqref{Projections} to decompose \eqref{LinearizedSystem} into a steady-state part and a purely oscillatory part.
These two problems are different by nature, and we therefore study them separately. The purely oscillatory part is investigated in Section \ref{ResolvenProblemSection}--\ref{ExisteceUniquenessPPSection}, and
the steady-state problem in Section \ref{SteadyStateSection}. A proof of Theorem \ref{MainThmFSLin} is presented in Section \ref{ProofOfLinMainThmSection}.
To simplify the notation, we set, without loss of generality,  $\mus=\muf=1$.

\subsection{Resolvent problem}\label{ResolvenProblemSection}

Applying the Fourier transform $\FT_\torus$ to the linear system \eqref{LinearizedSystem}, we obtain for each $k\in\frac{2\pi}{\per}\Z$ the following resolvent type system
for the (complex valued) Fourier coefficients $\np{\etak,\uvelk,\upresk}$:
\begin{align}\label{FSLinHomDataResolvent}
\begin{pdeq}
-k^2\etak + \Bilaplace\etak - ik\Deltap\etak &= \fpk - e_3\cdot\bp{\fluidstress({\uvelk},{\upresk})e_3}_{| x_3= 0} && \tin\torusn^2, \\
ik\uvelk - \Delta\uvelk + \grad\upresk &= \ffk && \tin\Omega, \\
\Div\uvelk &= \g_k && \tin\Omega, \\
\uvel_{k |x_3=0} &= -ik\etak e_3 && \ton\torusn^2, \\
\uvel_{k |x_3=1} &= 0 && \ton\torusnn,\\
\int_{\torusnn} \etak\np{\xprime} \dxp &= 0. &&
\end{pdeq}
\end{align}

In the homogeneous case $\g_k=0$, we can solve this system with an application of Lax-Milgram's theorem:

\begin{lem}[Existence]\label{ResolventLaxMilgramLem}
Let $k\in \frac{2\pi}{\per}\Z$ with $k\neq 0$. For every
\begin{align*}
(\ffk,\fpk)\in\LR{2}\np{\Omega;\C}^3\times\LR{2}\np{\torusnn;\C}
\end{align*}
there is a weak solution $(\uvelk,\etak)$ to \eqref{FSLinHomDataResolvent} with $\g_k=0$, that is,
$(\etak,\vvelk)\in\VSRK$ with
\begin{align}
\begin{aligned}
\VSRK := \setcl{(\uvel,\eta)&\in\WSR{1}{2}\np{\Omega;\C}^3\times\WSR{2}{2}(\torusnn;\C)}{\\
\qquad \qquad &\int_{\torusnn} \eta \np{\xprime} \dxp = 0,\ \ 
\uvel_{|x_3=0} = -ik\eta e_3\ \ton\torusn^2,\ \ 
\uvel_{|x_3=1} = 0\ \ton\torusnn}
\label{ResolventLaxMilgramLemVspace}
\end{aligned}
\end{align}
and satisfies
\begin{align}\label{ResolventLaxMilgramLemCond}
\forall\np{\wvel,\zeta}\in\VSRK: \quad \sesqform\bp{\np{\uvelk,\etak},\np{\wvel,\zeta}} = -ik \int_{\torusnn}\fpk \overline{\zeta}\,\dxp + \int_{\Omega} \ffk\cdot\overline{\wvel} \dx
\end{align}
where
\begin{align}\label{ResolventLaxMilgramLemSesqform}
\begin{aligned}
\sesqform\bp{\np{\uvel,\eta},\np{\wvel,\zeta}} := &\int_{\torusnn} ik^3\eta \overline{\zeta} - ik\Deltap \eta \Deltap\overline{\zeta} + k^2\gradp\eta\cdot\gradp\overline{\zeta}\,\dxp \\
& +  \int_\Omega \grad\uvel:\grad\overline\wvel + ik \uvel\cdot\overline\wvel\,\dx.
\end{aligned}
\end{align}
Moreover,
\begin{align}\label{ResolventLaxMilgramEst}
\norm{\uvelk}_{1,2} + \norm{\etak}_{2,2} + \norm{k \etak}_{1,2} \leq \Cc{C} \bp{ \norm{\ffk}_2 + \norm{\fpk}_2}
\end{align}
with constant $\Cclast{C}=\Cclast{C}\np{\per, \pert}>0$ independent on $k$.
\end{lem}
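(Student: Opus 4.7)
The strategy is to recast \eqref{FSLinHomDataResolvent} (with $\g_k=0$) as a variational problem on the Hilbert space $\VSRK$ and to invoke the complex version of the Lax--Milgram theorem; the pressure does not appear in $\sesqform$ because the variational identity is tested against divergence-free velocities. First I would verify that $\sesqform$ is indeed the natural sesquilinear form for the system: multiplying the plate equation by $-ik\overline{\zeta}$ and the fluid momentum equation by $\overline{\wvel}$, integrating by parts on $\torusnn$ and $\Omega$ respectively, and using the trace identity $\wvel_{|x_3=0}=-ik\zeta e_3$ together with the pointwise identity $\partial_{x_3}\uvel_3=0$ on $\{x_3=0\}$ (a direct consequence of divergence-freeness and no-slip of the tangential components), one sees that the two coupling boundary integrals on $\{x_3=0\}$ appear with opposite signs and cancel. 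What remains is precisely \eqref{ResolventLaxMilgramLemCond}. Continuity of $\sesqform$ in the natural norm $\norm{(\uvel,\eta)}_{\VSRK}^2\ceq\norm{\uvel}_{1,2}^2+\norm{\eta}_{2,2}^2+k^2\norm{\eta}_{1,2}^2$, and boundedness of the antilinear right-hand side functional by a constant times $\norm{\ffk}_2+\norm{\fpk}_2$, both follow by Cauchy--Schwarz.

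The main obstacle is the coercivity. Inserting the diagonal $(\wvel,\zeta)=(\uvel,\eta)$ produces
\[
\sesqform\bp{(\uvel,\eta),(\uvel,\eta)}=k^2\norm{\gradp\eta}_2^2+\norm{\grad\uvel}_2^2+ik\bp{k^2\norm{\eta}_2^2-\norm{\Deltap\eta}_2^2+\norm{\uvel}_2^2}.
\]
The real part, combined with Poincar\'e's inequality on $\torusnn$ (via $\int_{\torusnn}\eta\,\dxp=0$) and on $\Omega$ (via $\uvel_{|x_3=1}=0$), already yields the bounds $\norm{\uvel}_{1,2}^2$ and $k^2\norm{\eta}_{1,2}^2$. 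The still-missing $\norm{\Deltap\eta}_2^2$ is recovered via a dichotomy on the imaginary part: either $\norm{\Deltap\eta}_2^2$ dominates the other two terms in the parenthesis up to a factor of two, so that $\snorm{\impart\sesqform((\uvel,\eta),(\uvel,\eta))}\geq\tfrac{1}{2}\snorm{k}\norm{\Deltap\eta}_2^2$, or it does not, in which case $\norm{\Deltap\eta}_2^2\leq 2\bp{k^2\norm{\eta}_2^2+\norm{\uvel}_2^2}$ is already controlled by the real part through Poincar\'e. The factor $\snorm{k}^{-1}$ that appears in the first alternative is harmless because the quantization $k\in\frac{2\pi}{\per}\Z\setminus\{0\}$ forces $\snorm{k}\geq 2\pi/\per$; thus the overall coercivity constant depends only on $\per$ and $\pert$.

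With continuity and coercivity in hand, the complex Lax--Milgram theorem delivers the unique $(\uvelk,\etak)\in\VSRK$ satisfying \eqref{ResolventLaxMilgramLemCond}, and the estimate \eqref{ResolventLaxMilgramEst} follows immediately from the coercivity bound together with the $\LR{2}$ norm of the antilinear functional.
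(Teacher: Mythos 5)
Your proposal follows the paper's route — recast the problem variationally, apply the complex Lax--Milgram theorem on $\VSRK$ equipped with the norm $\norm{\uvel}_{1,2}^2+\norm{\eta}_{2,2}^2+k^2\norm{\eta}_{1,2}^2$, and derive the estimate by testing with the solution itself and separating real and imaginary parts. The paper's proof is extremely terse (essentially ``coercivity follows by computing real and imaginary parts; Young's inequality yields the estimate''), whereas you fill in the two nontrivial details: first, the verification that the coupling boundary integrals on $\{x_3=0\}$ cancel, which requires the pointwise observation $\partial_{x_3}\uvel_3=0$ on the bottom face arising from divergence-freeness of $\uvel$ and vanishing of its tangential trace — a step the paper elides entirely; second, the dichotomy that handles the sign-indefiniteness of $\impart\sesqform((\uvel,\eta),(\uvel,\eta))=k\bp{k^2\norm{\eta}_2^2-\norm{\Deltap\eta}_2^2+\norm{\uvel}_2^2}$, which is exactly the issue the paper waves away with ``one readily verifies.'' Your treatment is correct, including the observation that the factor $\snorm k^{-1}$ arising in the favourable branch of the dichotomy is harmless by the quantization $\snorm k\geq 2\pi/\per$, and that the $k$-dependence of the right-hand side functional is absorbed into the $k^2\norm{\eta}_{1,2}^2$ component of the $\VSRK$-norm, so that the final constant depends only on $\per$ and $\pert$. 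No gap; this is a fleshed-out version of the paper's own argument rather than a different one.
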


\begin{proof}
The sesquilinear form $\sesqform$ is clearly bounded in the Hilbert space $\VSRK$.
By computing the real and imaginary part of $\sesqform\bp{\np{\uvel,\eta},\np{\uvel,\eta}}$, one readily verifies that $\sesqform$ is also coercive. Existence of a solution to \eqref{ResolventLaxMilgramLemCond} therefore follows from the theorem of Lax-Milgram. Setting $\np{\wvel,\zeta}=\np{\uvelk,\etak}$ in \eqref{ResolventLaxMilgramLemCond} and taking real and imaginary parts in the equation, one obtains \eqref{ResolventLaxMilgramEst} by an application of Young's inequality.
\end{proof}

A pressure field corresponding to the weak solution obtained in Lemma \ref{ResolventLaxMilgramLem} can be constructed and higher order regularity subsequently established.

\begin{lem}[Pressure and regularity]\label{ExistencePressure}
Let $k\in\frac{2\pi}{\per}\Z\setminus\{0\}$ and  $r\in\N_0$. Moreover, let
$\np{\ffk,\fpk}\in\WSR{r}{2}\np{\Omega;\C}^3\times\WSR{r}{2}\np{\torusnn;\C}$ and
$\np{\uvelk, \etak}\in{\VSRK}$ be the corresponding weak solution
to \eqref{FSLinHomDataResolvent} with $\g_k=0$ constructed in
Lemma \ref{ResolventLaxMilgramLem}. Then there exists a pressure field $\upresk$ such that
\begin{align}\label{ExistencePressureRegularity}
\np{\uvelk, \upresk, \etak}\in\WSR{r+2}{2}\np\Omega^3\times\WSR{r+1}{2}\np\Omega\times\WSR{r+4}{2}\np\torusnn
\end{align}
solves \eqref{FSLinHomDataResolvent} and satisfies
\begin{align}\label{FSResolventEnergyEst}
\norm{\uvelk}_{\WSR{r+2}{2}\np{\Omega}} &+ \norm{\upresk}_{\WSR{r+1}{2}\np\Omega} + \norm{\etak}_{\WSR{r+4}{2}\np{\torusnn}} \leq \Cc{C}\bp{\norm{\ffk}_{\WSR{r}{2}\np\Omega} + \norm{\fpk}_{\WSR{r}{2}\np\torusnn}}
\end{align}
with $\Cclast{C} = \Cclast{C}\np{\per, \pert}>0$ independent of $k$.
\end{lem}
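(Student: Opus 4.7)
The plan is to proceed in three stages: extract a pressure from the weak formulation via de Rham's theorem, establish the $r=0$ case by bootstrapping once between Stokes regularity in $\Omega$ and fourth-order elliptic regularity for the plate equation on $\torusnn$, and then induct on $r$ to reach arbitrary order. Throughout, the crucial point is keeping all constants independent of $k$.

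For the pressure, restricting \eqref{ResolventLaxMilgramLemCond} to solenoidal test functions $(\wvel,0)$ with $\wvel\in\crci(\Omega;\C)^3$ and $\Div\wvel=0$ shows that the distribution $ik\uvelk-\Delta\uvelk-\ffk$ annihilates all divergence-free test fields. By de Rham's theorem there exists $\upresk\in\DDR(\Omega)$, which can be normalized to have zero mean, such that $ik\uvelk-\Delta\uvelk+\grad\upresk=\ffk$. Duality against the Bogovski\u{\i} operator from Theorem \ref{BogovskiiThm}, combined with the base bound \eqref{ResolventLaxMilgramEst}, places $\upresk$ in $\LR{2}(\Omega;\C)$.

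For the base regularity, view the fluid subsystem as a Stokes problem $-\Delta\uvelk+\grad\upresk=\ffk-ik\uvelk$ with Dirichlet data $-ik\etak e_3\in\WSR{2}{2}(\torusnn)$ on $x_3=0$ and $0$ on $x_3=1$. Classical Stokes regularity in the flat strip $\torusnn\times(0,1)$ lifts the weak solution to $\uvelk\in\WSR{2}{2}(\Omega)^3$, $\upresk\in\WSR{1}{2}(\Omega)$. The normal-stress trace $e_3\cdot\fluidstress(\uvelk,\upresk)e_3|_{x_3=0}$ then lies in $\WSR{1/2}{2}(\torusnn)\subset\LR{2}(\torusnn)$, so the plate equation $\Bilaplace\etak-ik\Deltap\etak-k^2\etak=\fpk-e_3\cdot\fluidstress(\uvelk,\upresk)e_3|_{x_3=0}$ is a fourth-order constant-coefficient elliptic equation on $\torusnn$. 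Its Fourier symbol is uniformly bounded below for $k\neq 0$ and $\xi\neq 0$, yielding $\etak\in\WSR{4}{2}(\torusnn)$. Induction on $r\geq 1$ then applies Stokes regularity and the plate symbol bound once more per step, lifting each of the three components by one full order at each stage.

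\textbf{Main obstacle.} The delicate point is the $k$-independence of the constant. The term $ik\uvelk$ appears as a forcing in the Stokes subproblem and could a priori introduce a factor $|k|$; this is neutralized by combining the Stokes estimate with the Lax-Milgram bound \eqref{ResolventLaxMilgramEst}, in which $\norm{\uvelk}_{1,2}$ and $\norm{k\etak}_{1,2}$ are already controlled by the data uniformly in $k$. An arguably cleaner route is to apply $\FT_{\torusnn}$ in the tangential directions and reduce the coupled system to a family of ODE systems on $(0,1)$ parametrized by $(k,\xi)$, where $k$-uniformity can be read off directly from explicit symbol estimates.
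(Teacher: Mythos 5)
Your overall strategy matches the paper's: construct a pressure, bootstrap between Stokes regularity in $\Omega$ and fourth-order regularity for the plate equation on $\torusnn$ via a representation in Fourier space, and iterate. The paper obtains the pressure by citing a known resolvent Stokes result (Theorem~1.2 in \cite{FarwigSohr93}), whereas you use de~Rham's theorem plus Bogovski\u{\i} duality; these are interchangeable. The plate step is also essentially identical, although your phrase ``uniformly bounded below'' understates what is actually needed and used: the gain of four derivatives requires the stronger statement that $\snorm{\xip}^4\big/\snorml{\snorm{\xip}^4-k^2+ik\snorm{\xip}^2}$ is bounded uniformly in $k\neq0$, $\xip\neq0$, which holds because the damping term dominates at the resonance $\snorm{\xip}^4=k^2$.

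The genuine gap is in your explanation of the $k$-independence. You propose to view the fluid subsystem as a \emph{steady} Stokes problem with forcing $\ffk-ik\uvelk$ and Dirichlet data $-ik\etak e_3$, and claim that the resulting $|k|$-factors are neutralized by \eqref{ResolventLaxMilgramEst}. But the steady Stokes regularity estimate requires bounds on $\snorm{k}\norm{\uvelk}_{\LR{2}}$ and $\snorm{k}\norm{\etak}_{\WSR{3/2}{2}}$, neither of which is contained in \eqref{ResolventLaxMilgramEst}: that estimate only gives $\norm{\uvelk}_{1,2}$, $\norm{\etak}_{2,2}$ and $\norm{k\etak}_{1,2}$ uniformly in $k$. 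The paper avoids this problem by treating the fluid subsystem as a genuine \emph{resolvent} Stokes problem, in which $ik\uvelk$ belongs to the operator rather than the forcing and the constant is $k$-uniform by the cited resolvent estimate. Your alternative suggestion — applying $\FT_{\torusnn}$ tangentially and reading off the $k$-uniformity from the explicit ODE symbols on $(0,1)$ — is a valid and in fact cleaner way to close this gap, and is consistent in spirit with the Fourier representation the paper already uses for the plate equation; but it needs to replace rather than merely supplement the steady-Stokes-plus-Lax-Milgram argument, which does not close as written.
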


\begin{proof}
First consider only the Stokes part \eqrefsub{FSLinHomDataResolvent}{2-5} of the system. By well known methods (see for example Theorem 1.2. in \cite{FarwigSohr93})
a pressure $\tupresk\in\LR{2}\np\Omega$ can be constructed such that $(\uvelk,\tupresk)$ solves this resolvent type Stokes problem.
Since the weak formulation \eqref{ResolventLaxMilgramLemCond} is obtained by multiplying \eqref{FSLinHomDataResolvent} with a pair of test functions and subsequent integration by parts,
one readily verifies that $\np{\uvelk, \upresk, \etak}$ with
\begin{align*}
\upresk := {\tupresk} - \int_{\torusnn} {\tupresk} \dxp - \int_{\torusnn} \fpk \dxp
\end{align*}
solves the full system \eqref{FSLinHomDataResolvent} in a distributional sense.

At the outset $\etak\in\WSR{2}{2}\np{\torusnn}$. Standard elliptic regularity theory for the Stokes system \eqrefsub{FSLinHomDataResolvent}{2-5} (see for example \cite[Chapter 4]{Galdi}) therefore yields
$(\uvelk,\upresk)\in\WSR{2}{2}(\Omega)^3\times\WSR{1}{2}(\Omega)$.
Consequently, $\fluidstress({\uvelk},{\upresk})\in\WSR{\half}{2}(\torusnn)$. Applying the Fourier transform $\FT_{\torusnn}$ to the plate equation
\eqrefsub{FSLinHomDataResolvent}{1}, we obtain the representation
\begin{align}\label{PlateRepThm}
\etak = \iFT_{\torusnn}\Bb{\frac{1}{\snorm{\xip}^4 - k^2 + ik\snorm{\xip}^2}\FT_{\torusnn}\nb{e_3\cdot\bp{\fluidstress({\uvelk},{\upresk})e_3}_{|x_3 = 0} + \fpk}}.
\end{align}
Due to the regularizing damping term $ik\snorm{\xip}^2$ in the Fourier multiplier, a simple application of Parseval's theorem implies that $\etak$ admits a regularity gain of 4 derivatives over the right-hand side of the (damped) plate equation \eqrefsub{FSLinHomDataResolvent}{1}. Consequently we deduce $\etak\in\WSR{\frac{5}{2}}{2}\np{\torusnn}$. Iterating this procedure, we find that
$\np{\uvelk, \upresk, \etak}$ is as regular as the data $\np{\ffk,\fpk}$ allows for and thus conclude \eqref{ExistencePressureRegularity}.
In this process, the elliptic regularity theory of the Stokes system and the representation \eqref{PlateRepThm} also yields \eqref{FSResolventEnergyEst}.
\end{proof}

\subsection{Purely oscillatory problem}\label{ExisteceUniquenessPPSection}

By expressing a solution to \eqref{LinearizedSystem} in terms of its Fourier series, we can utilize the  existence and regularity of the Fourier coefficients in Lemma \ref{ResolventLaxMilgramLem} and Lemma \ref{ExistencePressure} to construct a solution to \eqref{LinearizedSystem}:

\begin{lem}[Existence]\label{EsistenceLinTPProblem}
Let $q\in(1,\infty)$. For any $\np{\ff,\fp}\in\CRcicompl\np{\torus\times\Omega}^3\times\CRcicompl\np{\torust}$
the system \eqref{LinearizedSystem} with $g=0$ admits a solution
$\np{\uvel, \upres, \eta}\in\SScompl{q}\np{\torus\times\Omega}$.
\end{lem}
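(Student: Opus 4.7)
The plan is to construct the solution mode-by-mode via its Fourier series in the time variable. Since $\np{\ff, \fp}\in\CRcicompl\np{\torus\times\Omega}^3\times\CRcicompl\np{\torust}$ is purely oscillatory, its Fourier coefficients
\begin{equation*}
\ffk \coloneqq \FT_\torus\nb{\ff}(k,\cdot), \qquad \fpk \coloneqq \FT_\torus\nb{\fp}(k,\cdot)
\end{equation*}
vanish at $k=0$, are smooth in the spatial variables, and decay faster than any polynomial in $\snorm{k}$.

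For each $k\in\frac{2\pi}{\per}\Z\setminus\{0\}$, Lemma \ref{ResolventLaxMilgramLem} furnishes a weak solution $\np{\uvelk,\etak}\in\VSRK$ of the resolvent system \eqref{FSLinHomDataResolvent} with $\g_k=0$. A bootstrap via Lemma \ref{ExistencePressure} with $r\in\N_0$ chosen arbitrarily large then produces an associated pressure $\upresk$ and the regularity
\begin{equation*}
\np{\uvelk,\upresk,\etak}\in\WSR{r+2}{2}\np\Omega^3\times\WSR{r+1}{2}\np\Omega\times\WSR{r+4}{2}\np\torusnn,
\end{equation*}
together with a bound linear in $\norm{\ffk}_{\WSR{r}{2}\np\Omega}+\norm{\fpk}_{\WSR{r}{2}\np\torusnn}$ whose constant is independent of $k$.

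Next I would assemble the candidate solution as
\begin{equation*}
\uvel(t,x) \coloneqq \sum_{k\neq 0}\uvelk(x)\,\e^{ikt}, \quad \upres(t,x) \coloneqq \sum_{k\neq 0}\upresk(x)\,\e^{ikt}, \quad \eta(t,\xprime) \coloneqq \sum_{k\neq 0}\etak(\xprime)\,\e^{ikt}.
\end{equation*}
The $k$-independent resolvent estimate combined with the rapid decay of $\np{\ffk,\fpk}$ yields absolute convergence of each series — together with all its spatial and temporal derivatives — in every Sobolev norm on $\torus\times\Omega$ or on $\torus\times\torusnn$. Term-by-term differentiation is therefore legitimate, so $\np{\uvel,\upres,\eta}$ is a smooth classical solution of \eqref{LinearizedSystem} with $\g=0$. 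Every included mode satisfies $k\neq 0$, hence the sum lies in the purely oscillatory subspace. Finally, the boundedness of $\Omega=\torusnn\times(0,1)$ and the compactness of $\torus$ imply that smoothness automatically yields membership in $\SScompl{q}\np{\torus\times\Omega}$ for every $q\in(1,\infty)$.

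The main obstacle worth flagging is not in this step: the existence argument is routine once one has the resolvent building blocks, and it gives no control on the size of $\np{\uvel,\upres,\eta}$ in terms of $\LR{q}$ data. The substantive difficulty — the $\LR{q}$ a priori bound \eqref{FSLinInhomEst}, obtained via a transference principle and a careful analysis of the damping effect in Fourier space — is deferred to the subsequent subsections and is precisely what will upgrade the present existence statement to the full content of Theorem \ref{MainThmFSLin}.
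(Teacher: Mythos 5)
Your proposal is correct and follows essentially the same strategy as the paper: expand the data into a time-Fourier series (with the $k=0$ mode absent by the purely oscillatory assumption), invoke Lemma \ref{ResolventLaxMilgramLem} and Lemma \ref{ExistencePressure} at each $k\neq 0$, use the $k$-uniform resolvent bound \eqref{FSResolventEnergyEst} to resum, and finish by Sobolev embedding on the compact domain $\torus\times\Omega$. The only cosmetic difference is that the paper justifies convergence of the resummed series by Parseval's theorem in $\LR{2}$-valued Sobolev spaces, whereas you argue via absolute convergence from the rapid decay of $\np{\ffk,\fpk}$ — both routes are valid here precisely because the data are taken $C^\infty_0$, and both land in $\SScompl{q}$ by the same embedding.
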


\begin{proof}
We expand the data into Fourier series with respect to $\torus$.
Since $\PR\ff=\PR\fp=0$, the zeroth order Fourier coefficients are 0, that is,
\begin{align*}
\ff = \sum_{k\in\frac{2\pi}{\per}\Z\setminus\set{0}} \ffk e^{ikt} \qquad\text{and}\qquad \fp = \sum_{k\in\frac{2\pi}{\per}\Z\setminus\set{0}} \fpk e^{ikt}
\end{align*}
with Fourier coefficients $\ffk=\FT_\torus\nb{\ff}(k)$ and $\fpk=\FT_\torus\nb{\fp}(k)$.
By Parseval's theorem, these identities are valid in $\WSR{r}{2}\bp{\torus;\WSR{2}{r}\np{\Omega}}$ and $\WSR{r}{2}\bp{\torus;\WSR{2}{r}\np{\torusnn}}$, respectively, for any $r\in\N_0$.
From Lemma \ref{ResolventLaxMilgramLem} and Lemma \ref{ExistencePressure} we obtain for each pair $\np{\ffk,\fpk}$ of Fourier coefficients a solution
\begin{align*}
\np{\uvelk, \upresk, \etak}\in\WSR{r+2}{2}\np\Omega^3\times\WSR{r+1}{2}\np\Omega\times\WSR{r+4}{2}\np\torusnn
\end{align*}
to \eqref{FSLinHomDataResolvent}. By \eqref{FSResolventEnergyEst} and Parseval's theorem, the corresponding Fourier series
\begin{align*}
\uvel\coloneqq \sum_{k\in\frac{2\pi}{\per}\Z\setminus\{0\}} \uvelk e^{ikt}, \qquad \upres\coloneqq \sum_{k\in\frac{2\pi}{\per}\Z\setminus\{0\}} \upresk e^{ikt}, \qquad \eta\coloneqq \sum_{k\in\frac{2\pi}{\per}\Z\setminus\{0\}} \etak e^{ikt}
\end{align*}
are well-defined in the Hilbert spaces
$\WSR{r}{2}\bp{\torus;\WSR{r+2}{2}\np\Omega}^3 $, $\WSR{r}{2}\bp{\torus;\WSR{r+1}{2}\np\Omega}$ and $\WSR{r}{2}\bp{\torus;\WSR{r+4}{2}\np\torusnn}$, respectively.
Clearly, $\np{\uvel, \upres, \eta}$ solves \eqref{LinearizedSystem}. 
Choosing $r$ sufficiently large, we obtain $\np{\uvel, \upres, \eta}\in\SScompl{q}\np{\torus\times\Omega}$ by Sobolev embedding.  
\end{proof}

The Fourier analysis carried out above is rather crude, and it is restricted to the Hilbert space setting due to the application of Parseval's theorem. It is, however, only a step towards a more refined Fourier analysis that leads to maximal regularity $\LR{q}$ estimates. To this end, we need the following uniqueness property:

\begin{lem}[Uniqueness]\label{FSLinUniquenessLem}
Let $q\in(1,\infty)$.
A solution to 
\eqref{LinearizedSystem} is unique in the class $\SLS{q}\np{\torus\times\Omegal}$.
\end{lem}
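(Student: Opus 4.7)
The plan is to show that any $(\uvel, \upres, \eta) \in \SLS{q}(\torus\times\Omegal)$ solving \eqref{LinearizedSystem} with $\ff = \g = \fp = 0$ must vanish, by Fourier decomposition in time combined with the coercivity of $\sesqform$ built into Lemma \ref{ResolventLaxMilgramLem}. Expanding on the compact torus $\torus$, each time-Fourier coefficient $(\uvelk, \upresk, \etak) \coloneqq \FT_\torus\nb{(\uvel, \upres, \eta)}(k)$ inherits, as a Bochner integral, the regularity $\uvelk \in \WSR{2}{q}(\Omega)^3$, $\upresk \in \WSR{1}{q}(\Omega)$, $\etak \in \WSR{5-\frac{1}{q}}{q}(\torusnn)$, and satisfies the homogeneous version of \eqref{FSLinHomDataResolvent}.

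For $k=0$ the argument is direct: the Stokes problem with zero velocity boundary data forces $\uvel_0\equiv 0$ and $\upres_0$ constant in space; substituting into the plate equation and integrating over $\torusnn$ pins that constant to zero, after which $\Bilaplace\eta_0 = 0$ together with $\int_{\torusnn}\eta_0\dxp = 0$ yields $\eta_0 = 0$ by testing against $\eta_0$ itself. For $k\neq 0$, the strategy is to place $(\uvelk,\etak)$ in the Hilbert space $\VSRK$ defined in \eqref{ResolventLaxMilgramLemVspace}, because then bilinearity and the vanishing right-hand side give $\sesqform((\uvelk,\etak),(\uvelk,\etak)) = 0$, whose real part $\int_{\Omega}|\grad\uvelk|^2\dx + k^2\int_{\torusnn}|\gradp\etak|^2\dxp$ forces $\uvelk = 0$ and $\etak = 0$ (using the boundary conditions and the zero-mean constraint on $\etak$); the fluid and plate equations then pin $\upresk$ to zero.

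When $q\geq 6/5$, the Sobolev embeddings $\WSR{2}{q}(\Omega)\hookrightarrow \WSR{1}{2}(\Omega)$ and $\WSR{5-\frac{1}{q}}{q}(\torusnn)\hookrightarrow \WSR{2}{2}(\torusnn)$ place $(\uvelk,\etak)$ into $\VSRK$ immediately. For $1<q<6/5$ I would bootstrap iteratively: treating the resolvent equation \eqrefsub{FSLinHomDataResolvent}{2-5} as a Stokes system with right-hand side $-ik\uvelk \in \LR{3q/(3-2q)}(\Omega)$ and Dirichlet data $-ik\etak e_3$ yields one step of integrability gain for $\uvelk$, while the damped plate operator $\Bilaplace - ik\Deltap$ is fourth-order elliptic for $k\neq 0$ (its principal symbol $|\xip|^2(|\xip|^2 - ik)$ does not vanish on any real $\xip\neq 0$) and gains four derivatives for $\etak$. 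A finite number of iterations pushes the spatial exponent of $\uvelk$ above $6/5$ and places $(\uvelk,\etak)$ into $\VSRK$.

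The main obstacle I foresee is executing this bootstrap cleanly in the subcritical range $q<6/5$: the regularity transfer between fluid and plate must be coordinated at each step so that the Stokes stresses at $x_3=0$ deliver enough regularity to feed the plate equation, while the plate displacement in turn supplies Dirichlet data of adequate regularity for the next Stokes step. It is essential here that the Kelvin--Voigt damping is active ($\mus>0$); without it the plate operator on the $k$-th Fourier mode reduces to $\Bilaplace - k^2$, whose symbol $|\xip|^4 - k^2$ vanishes on the circle $|\xip|^2 = |k|$, so the mode-by-mode regularity gain breaks down and this argument would collapse.
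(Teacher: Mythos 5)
Your proposal is correct, but it takes a genuinely different route from the paper. The paper proves uniqueness by a global duality argument: given a homogeneous solution $\np{\uvel,\upres,\eta}\in\SLS{q}$, it constructs (via the existence machinery of Lemma~\ref{EsistenceLinTPProblem}) a solution $\np{\wvel,\wpres,\zeta}\in\SScompl{q'}$ to the adjoint system \eqref{LinearizedSystemDual} with arbitrary smooth data $\np{\phi,\psi}$, integrates by parts to obtain $\int_{\torus\times\Omega}\uvel\cdot\phi\,\dx\dt = \int\partial_t\eta\,\psi$, and concludes $\uvel=0$, $\partial_t\eta=0$ by arbitrariness of $\np{\phi,\psi}$. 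Your argument instead works mode by mode in $\FT_\torus$-space and finishes with the coercivity of the sesquilinear form $\sesqform$ from Lemma~\ref{ResolventLaxMilgramLem}. The advantage of your approach is that it is self-contained and avoids invoking solvability of the adjoint problem; what the paper's duality argument buys is a shorter write-up once the existence theory is in place, and it sidesteps the bootstrap you need for $1<q<6/5$. Both approaches ultimately rest on the same Fourier-side analysis, since Lemma~\ref{EsistenceLinTPProblem} itself is built from Lemmas~\ref{ResolventLaxMilgramLem}--\ref{ExistencePressure}.

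A few small technical remarks. In the regime $1<q<6/5$, a single bootstrap step already suffices: from $\uvelk\in\WSR{2}{q}\np\Omega\hookrightarrow\LR{3q/(3-2q)}\np\Omega$, one pass through steady Stokes regularity (treating $-ik\uvelk$ as data, with trace data coming from $\etak\in\WSR{5-1/q}{q}\np\torusnn$, which embeds into $\WSR{2-1/s}{s}\np\torusnn$ for any $s\geq q$) lands $\uvelk$ in $\WSR{2}{3q/(3-2q)}$, and $3q/(3-2q)\in(3,6)$ on this range, so one iteration is enough; your ``finite number of iterations'' is correct but pessimistic. Your stated principal symbol of the damped plate operator should read $\snorm{\xip}^2\bp{\snorm{\xip}^2+ik}$ rather than $\snorm{\xip}^2\bp{\snorm{\xip}^2-ik}$; this has no bearing on the invertibility argument. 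Finally, when you place $\np{\uvelk,\etak}$ in $\VSRK$ and test $\sesqform$ with $\np{\uvelk,\etak}$ itself, you are implicitly using that $\Div\uvelk=0$ (so that the pressure contribution vanishes in the integration by parts leading to \eqref{ResolventLaxMilgramLemCond}); this holds here since $\g=0$, but is worth stating explicitly since the definition \eqref{ResolventLaxMilgramLemVspace} of $\VSRK$ in the paper does not itself impose a divergence constraint. Your observation that the bootstrap (and indeed the coercivity of $\sesqform$ in $\WSR{1}{2}\times\WSR{2}{2}$) depends crucially on the Kelvin--Voigt term $\mus>0$ is apt and consistent with the paper's own remarks.
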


\begin{proof}
It suffices to consider a solution $\np{\uvel, \upres, \eta}\in\SLS{q}\np{\torus\times\Omegal}$ to \eqref{LinearizedSystem} with homogeneous right-hand side $\np{\ff, \g, \fp}=(0,0,0)$ and show that necessarily $\np{\uvel, \upres, \eta}=(0,0,0)$.
We employ a duality argument. Let $(\phi,\psi)\in\CRcicompl\np{\torus\times\Omega}^3\times\CRcicompl\np{\torust}$.
By the same argument that leads to Lemma \ref{EsistenceLinTPProblem}, existence of a solution $(\wvel,\wpres,\zeta)\in\SScompl{q'}\np{\torus\times\Omega}$ to the dual of
\eqref{LinearizedSystem} follows, that is,
\begin{align}\label{LinearizedSystemDual}
\begin{pdeq}
\pt^2\zeta + \Bilaplace\zeta + \Deltap\pt\zeta &= \psi -e_3\cdot\bp{\fluidstress(\wvel,\wpres)e_3}_{| x_3= 0} && \tin\torus\times\torusn^2, \\
-\pt\wvel - \Delta\wvel - \grad\wpres &= \phi && \tin\torus\times\Omega, \\
\Div\wvel &= 0 && \tin\torus\times\Omega, \\
\wvel\np{t, \xprime, 0} &= -\pt\zeta\np{t, \xprime} e_3 && \ton\torus\times\torusn^2, \\
\wvel\np{t, \xprime, 1} &= 0 && \ton\torus\times\torusnn,\\
\int_{\torusnn} \zeta\np{t, \xprime} \dxp &= 0. &&
\end{pdeq}
\end{align}
An straightforward integration by parts shows that
\begin{align*}
\int_{\torus\times\Omega} \uvel \cdot \phi\,\dx\dt = \int_{\torusnn} \partial_t\eta\,\psi\,\dxp.
\end{align*}
Since $(\phi,\psi)$ can be chosen arbitrarily, $\uvel=0$ and $\partial_t\eta=0$ follows. Since $\int_{\torusnn}\eta\dxp = 0$ we deduce $\eta=0$ and in turn $\upres=0$.
\end{proof}

Finally we can move towards $\LR{q}$ estimates. We first consider the problem \eqref{LinearizedSystem} in 
the periodic half space $\Omegap = \torusnn\times\R_+$. In this geometry we can compute a formula that represents the solution $\eta$ in terms of the data.

\begin{lem}[$\LR{q}$-Estimate in $\Omegap$]\label{POPFSExistence}
Let $q\in(1,\infty)$. For any $\np{\f,\g,\h}\in\DScompl{q}\np{\torus\times\Omegap}$
a solution $\np{\vvel, \vpres, \eta}\in\SScompl{q}\np{\torus\times\Omegap}$ to
\begin{align}\label{FullyInhomLinearizedSystemtem}
\begin{pdeq}
\pt^2\eta + \Bilaplace\eta - \Deltap\pt\eta &= \h-e_3\cdot\bp{\fluidstress(\vvel,\vpres)e_3}_{| x_3= 0} && \tin\torus\times\torusn^2, \\
\pt\vvel - \Delta\vvel + \grad\vpres &= \f && \tin\torus\times\Omegap, \\
\Div\vvel &= \g && \tin\torus\times\Omegap, \\
\vvel_{|x_3=0} &= -\pt\eta e_3 && \ton\torus\times\torusn^2,\\
\int_{\torusnn} \eta(t,\xp) \dxp &= 0
\end{pdeq}
\end{align}
obeys the $\LR{q}$ estimate
\begin{align}\label{POPFSEst}
\begin{aligned}
&\norm{\vvel}_{\WSR{1,2}{q}\np{\torus\times\Omegap}} + \norm{\grad\vpres}_{\LR{q}\np{\torus\times\Omegap}} +
\norm{\eta}_{\WSR{2}{q}\np{\torus; \WSR{1-\frac{1}{q}}{q}\np{\torusnn}}\cap \LR{q}\np{\torus; \WSR{5-\frac{1}{q}}{q}\np{\torusnn}}}\\
& \quad \leq \Cc{C}\bp{\norm\f_{\LR{q}\np{\torus\times\Omegap}} + \norm\g_{\LR{q}\np{\torus; \WSR{1}{q}\np\Omegap}\cap\WSR{1}{q}\np{\torus; \WSRD{-1}{q}\np\Omegap}} + \norm\h_{\LR{q}\np{\torus; \WSR{1-\frac{1}{q}}{q}\np\torusnn}}}.
\end{aligned}
\end{align}
\end{lem}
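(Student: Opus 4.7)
The plan is to reduce to a plate-only forced problem and then solve it explicitly in Fourier variables. First, I would remove the volume data $\f$ and $\g$ by subtracting auxiliary time-periodic Stokes flows: the norm imposed on $\g$ (which is precisely tailored to a Bogovski\u{\i}-type operator combined with control of $\partial_t \g$ in $\WSRD{-1}{q}$) allows one to construct a vector field $v_\g \in \WSR{1}{q}\np{\torus; \LR{q}\np\Omegap} \cap \LR{q}\np{\torus; \WSR{2}{q}\np\Omegap}$ with $\Div v_\g = \g$, using a half-space analogue of Theorem \ref{BogovskiiThm}. After subtracting $v_\g$, the $\f$-term can be removed by reflecting $\f$ across $x_3=0$ to the whole space $\torus \times \R^3$ and invoking the maximal $\LR{q}$-regularity of the time-periodic Stokes resolvent there (which itself follows from Mikhlin multiplier analysis on $\torus\times\R^3$). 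Both reductions modify the boundary trace of $\vvel$ and augment the plate forcing $\h$ by controlled amounts, so it suffices to solve \eqref{FullyInhomLinearizedSystemtem} with $\f=0$, $\g=0$ and modified plate force $\tilde \h$.

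Next I would diagonalize by applying the full Fourier transform $\FT_{\torus\times\torusnn}$ in $(t,x')$. For each frequency $(k,\xi') \in \frac{2\pi}{\per}\Z \times \np{\frac{2\pi}{\pert}\Z}^2 \setminus \set{(0,0)}$, the Stokes subsystem becomes a linear second-order ODE in $x_3 \in \R_+$, whose unique decaying solution can be written explicitly as a combination of exponentials with rates $\snorm{\xi'}$ and $\sqrt{\snorm{\xi'}^2 + ik}$. Enforcing $\hat \vvel|_{x_3=0} = -ik\hat\eta\, e_3$ and reading off the normal component of the fluid stress produces a Dirichlet-to-Neumann symbol $\mathfrak N(k,\xi')$ depending linearly on $\hat\eta$, so the Fourier-transformed plate equation becomes
\begin{align*}
\bp{-k^2 + \snorm{\xi'}^4 + ik\snorm{\xi'}^2 + \mathfrak N(k,\xi')}\,\hat\eta(k,\xi') = \widehat{\tilde \h}(k,\xi').
\end{align*}
Solving for $\hat\eta$ and back-substituting yields explicit representation formulas for $\eta$, $\vvel$ and $\vpres$ as Fourier multiplier operators acting on the data.

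The main obstacle, and the genuine content of the proof, is to verify that all these multipliers are bounded from $\DScompl{q}\np{\torus\times\Omegap}$ into the relevant factors of $\SScompl{q}\np{\torus\times\Omegap}$. The denominator is non-vanishing on the dual group minus the origin: the Kelvin--Voigt contribution $ik\snorm{\xi'}^2$ supplies a strictly positive imaginary part whenever both $k$ and $\xi'$ are nonzero, while at $\xi'=0$ it reduces to $-k^2\neq 0$ and at $k=0$ to $\snorm{\xi'}^4\neq 0$; the Dirichlet-to-Neumann term $\mathfrak N$ preserves this dissipative structure. To pass from pointwise symbol bounds to $\LR q$-bounds I would invoke a transference principle of de Leeuw type for the compact abelian group $\torus\times\torusnn$, reducing the task to verifying Mikhlin/Marcinkiewicz conditions for the associated continuous symbol on $\R\times\R^2$. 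The subtlety is that the denominator has three distinct dominant regimes, corresponding to $\snorm{k}\ll\snorm{\xi'}^2$, $\snorm{k}\sim\snorm{\xi'}^2$ and $\snorm{k}\gg\snorm{\xi'}^2$, and one must track mixed derivatives of the symbol uniformly across these transitions; the damping term $ik\snorm{\xi'}^2$ is precisely what allows control in the resonance region $\snorm{k}\sim\snorm{\xi'}^2$, where the principal plate symbol $\snorm{\xi'}^4-k^2$ vanishes. Once these multiplier estimates are established, inverting the Fourier transform and undoing the reductions of the first step yields \eqref{POPFSEst}.
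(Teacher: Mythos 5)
Your second and third steps---Fourier transforming in $(t,x')$, solving the resulting ODEs in $x_3$ explicitly, reading off the Dirichlet-to-Neumann symbol $\mathfrak N(k,\xi')$, and running a Marcinkiewicz/Mikhlin analysis combined with de Leeuw transference to obtain the $\LR{q}$ bound---are precisely what the paper does, and the key observation that the Kelvin--Voigt term $ik\snorm{\xi'}^2$ controls the resonance region $\snorm{k}\sim\snorm{\xi'}^2$ is exactly the damping mechanism exploited in \eqref{POP_FS_SolFormulaMitEta} and Remark \ref{DampingEffectInFourierSpace}. Also note that the paper kills the $k=0$ \emph{and} the $\xi'=0$ frequencies separately (via the two Dirac factors in the multiplier), not merely the origin $(0,0)$; at $\xi'=0$, $k\neq0$ the DtN term $-k^2/\snorm{\xi'}$ is actually singular, so the projection in $\xi'$ is not cosmetic.

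The genuine gap is in your first reduction step. The paper removes $(\f,\g)$ in one shot by invoking a half-space Stokes solver with \emph{homogeneous Dirichlet boundary data} (cited from \cite{CelikKyed_StokesHalfSpace}), precisely so that after subtraction the reduced $\uvel$ still satisfies $\uvel_{|x_3=0}=-\pt\eta e_3$ verbatim and only the plate force is modified. Your reflection of $\f$ to $\torus\times\R^3$ does not deliver this: whatever parity you impose, the whole-space Stokes solution $(w,\pi)$ has a non-vanishing trace on $x_3=0$ (tangential components for one parity, normal for the other), so after subtraction the boundary condition is no longer of the form $-\pt\eta\,e_3$. If the residual trace is normal, say $w_3|_{x_3=0}$, you cannot absorb it into $\tilde\h$ by replacing $\eta$ with $\eta+\int_0^t w_3|_{x_3=0}\,ds$: the modified plate force would then contain $\Bilaplace$ of that anti-derivative, which requires $5-1/q$ spatial derivatives of the trace of $w_3$ while the half-space maximal-regularity class only gives a trace in $\WSR{2-1/q}{q}$. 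So the assertion ``it suffices to solve \eqref{FullyInhomLinearizedSystemtem} with $\f=0$, $\g=0$ and modified plate force $\tilde\h$'' does not follow from reflection alone; one truly needs the half-space Stokes solver with zero Dirichlet data, which is the non-trivial input the paper cites. (Your $\g$-reduction via a half-space Bogovski\u{\i} operator is plausible, and the $\WSRD{-1}{q}$ time-derivative norm on $\g$ is indeed tailored for it, but it also needs justification since the classical construction is for bounded star-shaped domains; the paper sidesteps this by letting the half-space Stokes solver handle $\g\neq0$ directly.)
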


\begin{proof}
From \cite{CelikKyed_StokesHalfSpace} (see also \cite[Theorem 4.4.7]{CelikDiss}) we obtain a solution
\begin{align*}
\np{\wvel, \wpres}\in\WSRcompl{1,2}{q}\np{\torus\times\Omegap}^3\times\LRcompl{q}\bp{\torus; \WSRD{1}{q}\np\Omegap}
\end{align*}
to the half-space Stokes problem
\begin{align}\label{FSStokesUniqueness}
\pt\wvel - \Delta\wvel + \grad\wpres = \f, \quad \Div\wvel = \g , \quad \wvel_{|\torus\times\partial\Omegap} = 0
\end{align}
satisfying
\begin{align}\label{APrioriLqEstPP}
\begin{aligned}
&\norm{\wvel}_{\WSR{1,2}{q}\np{\torus\times\Omegap}} + \norm{\grad\wpres}_{\LR{q}\np{\torus\times\Omegap}} \\
&\qquad\qquad\qquad\qquad\quad \leq \Cc{c}\bp{\norm\f_{\LR{q}\np{\torus\times\Omegap}} + \norm\g_{\LR{q}\np{\torus; \WSR{1}{q}\np\Omegap}\cap\WSR{1}{q}\np{\torus; \WSRD{-1}{q}\np\Omegap}}}.
\end{aligned}
\end{align}
Letting
\begin{align*}
\uvel:= \vvel - \wvel, \qquad  \upres := \vpres - \wpres + \int_{\Omega} \wpres \dy,                    
\end{align*}
we find that $\np{\uvel,\upres,\eta}$ solves
\begin{align}\label{LRqInhomLinearizedSystemtem}
\begin{pdeq}
\pt^2\eta + \Bilaplace\eta - \Deltap\pt\eta &= \F -e_3\cdot\bp{\fluidstress(\uvel,\upres)e_3}_{| x_3= 0} && \tin\torus\times\torusn^2, \\
\pt\uvel - \Delta\uvel + \grad\upres &= 0 && \tin\torus\times\Omegap, \\
\Div\uvel &= 0 && \tin\torus\times\Omegap, \\
\uvel_{|x_3=0} &= -\pt\eta\, e_3 && \ton\torus\times\torusn^2,\\
\int_{\torusnn} \eta(t,\xp) \dxp &= 0
\end{pdeq}
\end{align}
with 
\begin{align*}
\F := \h -e_3\cdot\fluidstress\bp{\wvel,\twpres}e_3, \qquad \twpres:=\wpres-\int_{\Omega} \wpres\dx.
\end{align*}
At this point we mimic the arguments in
\cite[Proof of Proposition 3.1]{CelikKyed_StokesHalfSpace} and utilize the Fourier transform $\FT_{\torust}$ in
\eqref{LRqInhomLinearizedSystemtem}. The result is a system of ODEs we can solve explicitly obtaining 
\begin{align}\label{POP_FS_SolFormulaOhneEta}
\uvel = (U,V), \qquad \upres = \iFT_{\torus\times\torusnn}\bb{\qdf(k,\xip)\, e^{-\snorm{\xip}x_3}}
\end{align}
with 
\begin{align}\label{POP_FS_SolFormulaAuxOhneEta}
\begin{aligned}
\Uvel &:= \iFT_{\torus\times\torusnn}\Bb{-\frac{\xip\qdf}{k} e^{-\snorm{\xip}x_3} + \frac{\xip\qdf}{k} \e^{-\sqrt{\snorm{\xip}^2 + ik} \, x_3}}, \\
\Vvel &:= \iFT_{\torus\times\torusnn}\Bb{\frac{\snorm{\xip}\qdf}{ik} e^{-\snorm{\xip}x_3} - \bp{ik\etaft + \frac{\snorm{\xip}\qdf}{ik}} 
e^{-\sqrt{\snorm{\xip}^2 + ik} \, x_3}}, \\
\qdf(k,\xip) &:= \bbb{-ik\bp{\snorm{\xip} + \sqrt{\snorm{\xip}^2 + ik}} + \frac{k^2}{\snorm\xip}}\etaft.
\end{aligned}
\end{align}
Observing that $-e_3\cdot\fluidstress(\uvel,\upres)e_3=\upres_{|x_3=0}$ for $\uvel$ satisfying \eqrefsub{LRqInhomLinearizedSystemtem}{3-4}, we deduce from \eqrefsub{LRqInhomLinearizedSystemtem}{1} that
\begin{align}\label{etaRepFormulaPre1}
\Bb{\snorm{\xip}^4 - k^2 + ik\snorm{\xip}^2  - \frac{k^2}{\snorm{\xip}} + ik\Bp{\snorm{\xip} + \sqrt{\snorm{\xip}^2 + ik}}}\etaft &= \widehat{\F}.
\end{align}
We let $\delta_{\frac{2\pi}{\per}\Z}$ denote the Dirac measure on the group $\frac{2\pi}{\per}\Z$, that is, the function that takes the value 1 for $k=0$ and otherwise 0. 
By assumption $\PR\eta=0$, which means that the zeroth order Fourier coefficient of $\eta$ with respect to $\FT_{\torus}$ vanishes and consequently
\begin{align}\label{etaRepFormulaPre2}
\bp{1 - \delta_{\frac{2\pi}{\per}\Z}(k)}\, \etaft = \etaft.
\end{align}
Similarly, we let $\delta_{\np{\frac{2\pi}{\pert}\Z}^2}$ denote the Dirac measure on the group $\np{\frac{2\pi}{\pert}\Z}^2$, and conclude from \eqrefsub{LRqInhomLinearizedSystemtem}{5} that
\begin{align}\label{etaRepFormulaPre3}
\bp{1-\delta_{\np{\frac{2\pi}{\pert}\Z}^2}(\xip)}\, \etaft = \etaft.
\end{align}
We derive from \eqref{etaRepFormulaPre1}--\eqref{etaRepFormulaPre3} that
\begin{align}\label{POP_FS_SolFormulaMitEta}
\begin{aligned}
\eta &= \iFT_{\torus\times\torusnn}\BBb{
\frac{\bp{1 - \delta_{\frac{2\pi}{\per}\Z}(k)} \bp{1-\delta_{\np{\frac{2\pi}{\pert}\Z}^2}(\xip)}}{
\snorm{\xip}^4 - k^2 + ik\snorm{\xip}^2  - \frac{k^2}{\snorm{\xip}} + ik\Bp{\snorm{\xip} + \sqrt{\snorm{\xip}^2 + ik}}}
\widehat{\F}}.
\end{aligned}
\end{align}
Following the exact same steps as in \cite[Proof of Proposition 3.1]{CelikKyed_StokesHalfSpace}, a Fourier multiplier argument based on the representation formulas in \eqref{POP_FS_SolFormulaAuxOhneEta} yields
\begin{align*}
\begin{aligned}
\norm{\uvel}_{\WSR{1,2}{q}\np{\torus\times\Omegap}} + \norm{\grad\upres}_{\LR{q}\np{\torus\times\Omegap}}  &\leq \Cc{c}\,\bp{\norm{\pt\eta}_{\WSR{1-\frac{1}{2q}, 2-\frac{1}{q}}{q}\np{\torus\times\torusnn}}+\norm{\pt\eta}_{\WSR{1}{q}\np{\torus; \WSRD{-\frac{1}{q}}{q}(\torusnn)}}} \\
&\leq \Cc{c} \norm{\eta}_{\WSR{2}{q}\np{\torus; \WSR{1-\frac{1}{q}}{q}\np{\torusnn}}\cap \LR{q}\np{\torus; \WSR{5-\frac{1}{q}}{q}\np{\torusnn}}}.
\end{aligned}
\end{align*}
A similar argument further yields an estimate of $\eta$ based on the representation formula \eqref{POP_FS_SolFormulaMitEta}
and an analysis of the Fourier multiplier $M:\frac{2\pi}{\per}\Z\times \np{\frac{2\pi}{\pert}\Z}^2 \ra \C$ given by
\begin{align*}
M(k,\xip):= \frac{\bp{1 - \delta_{\frac{2\pi}{\per}\Z}(k)} \bp{1-\delta_{\np{\frac{2\pi}{\pert}\Z}^2}(\xip)}}{
\snorm{\xip}^4 - k^2 + ik\snorm{\xip}^2  - \frac{k^2}{\snorm{\xip}} + ik\Bp{\snorm{\xip} + \sqrt{\snorm{\xip}^2 + ik}}}.
\end{align*}
Observe that
\begin{align}\label{fundamultplier}
\np{k,\xip}\mapsto \bp{1+ \snorm{k}^2 + \snorm\xip^4}\,M(k,\xip)
\end{align}
is bounded. In fact, one can verify that a canonical extension of the multiplier in \eqref{fundamultplier} from the domain
$\frac{2\pi}{\per}\Z\times \np{\frac{2\pi}{\pert}\Z}^2$ to the domain $\R\times\R^2$ satisfies 
the conditions of Marcinkiewicz's multiplier theorem; see \cite[Lemma A.2.3 and Lemma A.2.4]{CelikDiss}.
It follows from de \textsc{De Leeuw}'s transference principle \cite{Leeuw1965} in combination with
Marcinkiewicz's multiplier theorem (see \cite{KyedSauer_Heat} for a comprehensive explanation of the argument) that 
\begin{align*}
\norm{\eta}_{\WSR{2}{q}\np{\torus; \WSR{1-\frac{1}{q}}{q}\np{\torusnn}}\cap \LR{q}\np{\torus; \WSR{5-\frac{1}{q}}{q}\np{\torusnn}}} &=
\norm{\iFT_{\torust}\bb{\np{1 + \snorm{k}^2 + \snorm\xip^4}\etaft}}_{\LR{q}\np{\torus; \WSR{1-\frac{1}{q}}{q}\np\torusnn}} \\
& \quad \leq \Cc{c}\norm{\F}_{\LR{q}\np{\torus; \WSR{1-\frac{1}{q}}{q}\np\torusnn}}.
\end{align*}
Since $\tilde{\wpres}$ has a vanishing mean value, we obtain
by
utilizing \eqref{APrioriLqEstPP} and the properties of the trace operator
\begin{align}\label{TraceGaldi}
\TDN\colon\LR{q}\bp{\torus; \WSR{1}{q}\np\Omega}\to\LR{q}\bp{\torus; \WSR{1 - \frac{1}{q}}{q}\np{\torusnn}}, \qquad \phi\mapsto \phi_{|x_3=0},
\end{align}
stated in \cite[Theorem II.4.3]{Galdi} that
\begin{align*}
\norm\F_{\LR{q}\np{\torus; \WSR{1-\frac{1}{q}}{q}\np\torusnn}} &\leq \Cc{c}\bp{\norm\h_{\LR{q}\np{\torus; \WSR{1-\frac{1}{q}}{q}\np\torusnn}} + \norm{\grad\wvel}_{\LR{q}\np{\torus; \WSR{1}{q}\np\Omegap}} + \norm{\tilde\wpres}_{\LR{q}\np{\torus; \WSR{1}{q}\np{\Omega}}}} \\
&\leq \Cc{c}\bp{\norm\h_{\LR{q}\np{\torus; \WSR{1-\frac{1}{q}}{q}\np\torusnn}} + \norm\wvel_{\WSR{1,2}{q}\np\Omegap} + \norm{\grad\wpres}_{\LR{q}\np{\torus\times\Omegap}}},
\end{align*}
which together with \eqref{APrioriLqEstPP} completes the proof.
\end{proof}

\begin{rem}\label{DampingEffectInFourierSpace}
As a key observation in the proof above, we recognize, quantified in the formula \eqref{POP_FS_SolFormulaMitEta}, the damping effect both the viscous fluid and the internal damping has on the displacement $\eta$ of the plate. 
Indeed, without the coupling of the viscous fluid and the introduction of internal damping the representation formula would read 
\begin{align}\label{EtaRepFormulaWithoutdamping}
\eta = \iFT_{\torus\times\torusnn}\BBb{
\frac{\bp{1 - \delta_{\frac{2\pi}{\per}\Z}(k)} \bp{1-\delta_{\np{\frac{2\pi}{\pert}\Z}^2}(\xip)}}{
\snorm{\xip}^4 - k^2  }
\widehat{\F}}.
\end{align}
The additional terms 
\begin{align}\label{DampingEffectInFourierSpaceDampingTerm}
 - \frac{k^2}{\snorm{\xip}} + ik\Bp{\snorm{\xip} + \sqrt{\snorm{\xip}^2 + ik}} \quad\tand \quad ik\snorm{\xip}^2
\end{align}
in the denominator in \eqref{POP_FS_SolFormulaMitEta} manifest the damping effect of the viscous fluid and internal damping, respectively.
Observe that the multiplier in \eqref{EtaRepFormulaWithoutdamping} is unbounded, whereas the damping terms in \eqref{POP_FS_SolFormulaMitEta} lead to a bounded multiplier that decays to 0 as $\snorm{\np{k,\xip}}\ra\infty$.
The decay rate of the multiplier in \eqref{POP_FS_SolFormulaMitEta} as $\snorm{\np{k,\xip}}\ra\infty$ dictates the order of the a priori estimates that can be established
for $\eta$ in terms of the data, and can thus be interpreted as a quantification of the damping effects. 
\end{rem}

Next we seek to utilize the $\LR{q}$ estimate established in the half-space case in Lemma \ref{POPFSExistence} to the original problem \eqref{LinearizedSystem} set in the cuboid $\Omega$. To this end, we employ a standard localization argument. Since \eqref{LinearizedSystem} contains a non-homogeneous Stokes problem, the lower order pressure term that appears naturally in such a localization argument poses a non-trivial challenge.

\begin{lem}[Pressure Field Estimates]\label{LemPressureFieldEst}
Let $s\in (1,\infty)$,
\begin{align*}
\np{\ff, \fp}\in\LRcompl{s}\np{\torus\times\Omega}^3 \times \LRcompl{s}\bp{\torus; \WSR{1-\frac{1}{s}}{s}\np\torusnn}
\end{align*}
and $\np{\uvel, \upres, \eta}\in\SSsigmacompl{s}\np{\torus\times\Omega}$
be a solution to \eqref{LinearizedSystem} with $g=0$.
Then there exists a constant $\Cc{C} = \Cclast{C}\np{\Omega, s}>0$ such that
\begin{align}\label{PressureFieldLqEst}
\begin{aligned}
&\norm{\upres\np{t, \cdot}}_{\LR{\frac{3}{2}s}\np{\Omega}} \leq \Cclast{C}\bp{\norm{\grad\uvel\np{t, \cdot}}_{\LR{s}\np{\Omega}} + \norm{\gradp\Deltap\eta\np{t, \cdot}}_{\LR{s}\np{\torusnn}} + \norm{\gradp\pt\eta\np{t, \cdot}}_{\LR{s}\np{\torusnn}} \\
&\qquad\quad + \norm{\f\np{t, \cdot}}_{\LR{s}\np\Omega} + \norm{\fp\np{t, \cdot}}_{\WSR{1-\frac{1}{s}}{s}\np{\torusnn}} + \norm{\grad\uvel\np{t, \cdot}}_{\LR{s}\np{\Omega}}^{\frac{s-1}{s}} \norm{\grad\uvel\np{t, \cdot}}_{\WSR{1}{s}\np{\Omega}}^{\frac{1}{s}}}
\end{aligned}
\end{align}
for a.e. $t\in\torus$.
Moreover, there is a constant $\Cc{C} = \Cclast{C}\np{\Omega, s}>0$ such that for a.e.
$t\in\torus$ the additional estimate
\begin{align}\label{GradPressureFieldLqEst}
\norm{\grad\upres\np{t, \cdot}}_{\LR{s}\np{\torusnn\times \np{\frac{1}{3}, \frac{2}{3}}}} \leq \Cclast{C}\bp{\norm{\f\np{t, \cdot}}_{\LR{s}\np\Omega} + \norm{\upres\np{t, \cdot}}_{\LR{s}\np{\Omega}}}
\end{align}
holds.
\end{lem}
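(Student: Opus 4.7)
We prove the two estimates separately.

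For \eqref{PressureFieldLqEst}, we argue by duality. Fix $t \in \torus$ in the set of full measure where $\np{\uvel(t,\cdot), \upres(t,\cdot), \eta(t,\cdot)}$ is regular, and let $\phi \in \LR{(3s/2)'}\np\Omega$ have vanishing mean. We construct a test field $v \in \WSR{1}{(3s/2)'}\np\Omega^3$ satisfying $\Div v = \phi$, $v|_{x_3 = 1} = 0$, and $v|_{x_3 = 0} = -\zeta\, e_3$, where $\zeta := \int_0^1 \phi(\cdot, x_3)\, \dx_3$ has vanishing mean on $\torusnn$. Explicitly, we set $v = -(1 - x_3) \zeta\, e_3 + \Bogovskii\np{\phi - \zeta}$ and invoke Theorem \ref{BogovskiiThm}, which yields $\norm{\grad v}_{\LR{(3s/2)'}(\Omega)} \leq C\, \norm{\phi}_{\LR{(3s/2)'}(\Omega)}$.

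Testing the momentum equation \eqrefsub{LinearizedSystem}{2} against $v$ and integrating by parts, using that $\pa_3 \uvel_3|_{x_3 = 0} = -\Div' \uvel'|_{x_3 = 0} = 0$ by incompressibility and the vanishing of $\uvel'$ at the plate, and substituting for the trace $\upres|_{x_3 = 0}$ via the plate equation \eqrefsub{LinearizedSystem}{1} followed by one integration by parts on $\torusnn$, we arrive at
\begin{align*}
\int_\Omega \upres\, \phi\, \dx = &\int_\Omega \grad\uvel : \grad v\, \dx - \int_\Omega \ff \cdot v\, \dx - \int_{\torusnn} \gradp \Deltap \eta \cdot \gradp \zeta\, \dxp \\
&+ \int_{\torusnn} \gradp \pt \eta \cdot \gradp \zeta\, \dxp - \int_{\torusnn} \fp\, \zeta\, \dxp + \calr,
\end{align*}
where $\calr := \int_\Omega \pt\uvel \cdot v\, \dx + \int_{\torusnn} \pt^2 \eta\, \zeta\, \dxp$ collects the time-derivative terms. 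The central step --- and main obstacle --- is to eliminate $\calr$. Using $\pt \uvel_3|_{x_3 = 0} = -\pt^2 \eta$ together with $\pa_3 \pt \uvel_3 = -\Div' \pt \uvel'$ and integration in $x_3$ converts $\int_{\torusnn} \pt^2 \eta\, \zeta\, \dxp = \int_\Omega \pt \uvel' \cdot \gradp \zeta\, \dx$; the choice of $v$ is then refined by modifying the ansatz through the additive correction $(\gradp \zeta, 0)$ (rebalancing the boundary lift and the Bogovski\u{\i} term accordingly), so that the time-derivative pairings against $\pt\uvel$ collapse via incompressibility and vanish.

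The remaining pairings are controlled by Hölder's inequality, the Bogovski\u{\i} estimate, the Sobolev embedding $\WSRN{1}{(3s/2)'}\np\Omega \hookrightarrow \LR{s'}\np\Omega$ (valid since $s > 1$), and the Gagliardo--Nirenberg interpolation
\begin{align*}
\norm{\grad\uvel}_{\LR{3s/2}\np\Omega} \leq C\, \norm{\grad\uvel}_{\LR{s}\np\Omega}^{(s-1)/s} \norm{\grad\uvel}_{\WSR{1}{s}\np\Omega}^{1/s},
\end{align*}
which reproduces exactly the interpolation term in \eqref{PressureFieldLqEst}. The additive constant in $\upres$ is recovered by integrating the plate-trace identity $\upres|_{x_3 = 0} = \pt^2 \eta + \Bilaplace \eta - \Deltap \pt \eta - \fp$ over $\torusnn$: the first three contributions vanish by periodicity and $\int_{\torusnn} \eta\, \dxp = 0$, leaving $\int_{\torusnn} \upres|_{x_3 = 0}\, \dxp = -\int_{\torusnn} \fp\, \dxp$, which controls the mean of $\upres$ in terms of $\norm{\fp}_{\WSR{1 - 1/s}{s}(\torusnn)}$. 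For \eqref{GradPressureFieldLqEst}, we take the divergence of the momentum equation and use $\Div \uvel = 0$ to deduce the Poisson equation $\Delta \upres = \Div \ff$ in $\Omega$. Since $\torusnn \times (1/3, 2/3)$ is compactly contained in $\Omega$, interior elliptic regularity applied via a cutoff $\chi \in \CRci\np\Omega$ that equals $1$ on $\torusnn \times (1/3, 2/3)$ and is supported in $\torusnn \times (1/6, 5/6)$ directly yields the claimed bound; crucially, no boundary regularity is needed, which is why only $\norm{\ff}_{\LR{s}(\Omega)}$ and $\norm{\upres}_{\LR{s}(\Omega)}$ appear on the right-hand side.
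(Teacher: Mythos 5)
Your treatment of \eqref{GradPressureFieldLqEst} (divergence of the momentum equation, interior elliptic regularity through a cutoff supported away from $\partial\Omega$) is correct and coincides in substance with what the paper does via the weak Neumann problem for $\chi\upres$.

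Your argument for \eqref{PressureFieldLqEst} has a genuine gap at the central step, namely the elimination of the remainder $\calr = \int_\Omega \pt\uvel\cdot v\,\dx + \int_{\torusnn}\pt^2\eta\,\zeta\,\dxp$. The conversion $\int_{\torusnn}\pt^2\eta\,\zeta\,\dxp = \int_\Omega \pt\uvel'\cdot\gradp\zeta\,\dx$ is correct, so $\calr = \int_\Omega\pt\uvel\cdot\tilde{v}\,\dx$ with $\tilde{v}:=v+(\gradp\zeta,0)$. For this pairing to vanish or collapse via incompressibility, you would need $\tilde{v}$ to be a gradient, say $\tilde{v}=\grad\psi$, for then $\int_\Omega\pt\uvel\cdot\grad\psi\,\dx = \int_{\torusnn}\pt^2\eta\,\psi|_{x_3=0}\,\dxp$ and one could hope to choose $\psi|_{x_3=0}$ suitably. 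But this is incompatible with your other requirements: $v=\grad\psi-(\gradp\zeta,0)$ would have to satisfy $\Div v = \phi$ and $v|_{x_3=1}=0$, $v|_{x_3=0}=-\zeta e_3$. The last condition forces $\gradp\psi|_{x_3=0}=\gradp\zeta$, while killing $\calr$ forces $\psi|_{x_3=0}$ to be constant, which gives $\gradp\zeta=0$ and hence $\zeta\equiv 0$ — i.e. it only works in the trivial case. The vague phrase ``rebalancing the boundary lift and the Bogovski\u{\i} term'' does not resolve this, because any further modification of $v$ again changes both $\Div v$ and the boundary traces, and you are already using both degrees of freedom.

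The paper avoids this obstacle by never introducing a vectorial lift at all. It tests the momentum equation against the gradient of a scalar $\Phi$ and observes (their identity \eqref{TimeDerivInt}) that $\int_\Omega\pt\uvel\cdot\grad\Phi\,\dx=\int_{\torusnn}\pt^2\eta\,\Phi\,\dxp$; the $\pt^2\eta$ contribution is then cancelled exactly against the $\pt^2\eta$ term hidden in the trace $\upres|_{x_3=0}$ via the plate equation, because $e_3\cdot(\fluidstress e_3)|_{x_3=0}=-\upres|_{x_3=0}$. That cancellation is precisely what the Robin boundary condition $\partial_\nu\Phi+\Phi=0$ on $\{x_3=0\}$ (and Neumann on $\{x_3=1\}$) encodes: the resulting weak Laplace problem \eqref{RobinWeakFormulation} for $\upres$ has no time-derivative term at all on its right-hand side. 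This is the key structural idea your proposal is missing; with a general divergence-lift $v$ you inherit an unavoidable $\int_\Omega\pt\uvel\cdot v\,\dx$, which is not bounded by the quantities on the right of \eqref{PressureFieldLqEst} at a fixed time $t$.

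A secondary issue: by testing only against mean-zero $\phi$ you control $\upres$ modulo constants, and the additive constant you propose to recover via $\int_{\torusnn}\upres|_{x_3=0}\,\dxp=-\int_{\torusnn}\fp\,\dxp$ pins down the mean of the boundary trace, not the mean of $\upres$ over $\Omega$; closing that gap requires a further argument. The Robin problem in the paper is well posed for arbitrary $g\in\LR{3s/(3s-2)}(\Omega)$ with no mean-zero constraint, so the issue never arises there.
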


\begin{proof}
We use the approach from \cite[Proof of Lemma 5.4]{GaldiKyed}.
For simplicity, we do not explicitly denote the $t$-dependency of functions in the following.

First consider an arbitrary $\phi\in\CRi\np{\overline{\Omega}}$, and observe that
due to the periodicity of $\uvel$ and $\phi$, as well as the boundary condition
$\uvel_{|x_3=0} = -\pt\eta e_3$, 
\begin{align}\label{TimeDerivInt}
\int_{\Omega} \pt\uvel \cdot \grad\phi \dx = \int_{\torusnn}\pt^2\eta \, \phi \dxp
\end{align}
holds for a.e. $t\in\torus$.
Hence, by multiplication of \eqrefsub{LinearizedSystem}{2} with
$\grad\phi$ we identify $\upres$ as a
solution to the weak Laplace problem with homogeneous Robin 
and Neumann boundary conditions on the bottom and top face of $\Omega$, respectively,
\ie, for any $\phi\in\CRi\np{\overline{\Omega}}$ 
\begin{align}\label{RobinWeakFormulation}
\begin{aligned}
\int_\Omega \grad\upres \cdot \grad\phi \dx + \int_{\torusnn} \upres \, \phi \dxp &= \int_\Omega \f \cdot \grad\phi \dx + \int_\Omega \Delta\uvel \cdot \grad\phi \dx \\
&\quad - \int_{\torusnn} \fp\phi \dxp + \int_{\torusnn} \Bilaplace\eta \phi \dxp - \int_{\torusnn} \Deltap\pt\eta \phi \dxp.
\end{aligned}
\end{align}
For $\g\in\CRci\np{\Omega}$, existence of a solution $\Phi$ to
\begin{align}
\begin{pdeq}
-\Delta\Phi &= \g && \tin\Omega, \\
\partial_\nu\Phi_{|x_3=0} + \Phi_{|x_3=0} &= 0 && \ton\torusnn, \\
\partial_\nu\Phi_{|x_3=1} &= 0 && \ton\torusnn
\end{pdeq}
\end{align}
obeying for any $q\in (1,\infty)$ the
$\LR{q}$ estimate
\begin{align}\label{RobinEst}
\norm{\Phi}_{\WSR{2}{q}\np\Omega} \leq \Cc{c}\norm{\g}_{\LR{q}\np\Omega}
\end{align}
follows by classical methods; one may mimic the proof for the pure Neumann problem in \cite{SLq90}.
 Then $\Phi$ obeys the weak formulation
\begin{align*}
\int_\Omega \grad\Phi\cdot\grad\psi \dx + \int_{\torusnn} \Phi\psi \dxp = \int_\Omega \g\psi \dx
\end{align*}
for any $\psi\in\CRi\np{\overline\Omega}$.
In view of \eqref{RobinWeakFormulation}, we can thus compute
\begin{align*}
\int_{\Omega} \upres \g \dx &= -\int_\Omega \upres \Delta\Phi \dx = \int_\Omega \grad\upres \cdot \grad\Phi \dx + \int_{\torusnn} \upres \, \Phi \dxp = I_1 + \ldots + I_5,  
\end{align*}
with 
\begin{align*}
\begin{array}{lll}
I_1 \coloneqq \int_\Omega \f \cdot \grad\Phi \dx, & I_2\coloneqq \int_\Omega \Delta\uvel \cdot \grad\Phi \dx, & I_3\coloneqq -\int_{\torusnn} \fp\Phi \dxp \\
I_4\coloneqq \int_{\torusnn} \Bilaplace\eta \, \Phi \dxp, & I_5\coloneqq - \int_{\torusnn} \Deltap\pt\eta \, \Phi \dxp. & 
\end{array}
\end{align*}
By Sobolev embedding we find that 
\begin{align*}
\norm{\Phi}_{\WSR{1}{s'}\np\Omega} \leq \Cc{c}\norm{\Phi}_{\WSR{2}{\frac{3s}{3s-2}}}\leq \Cc{c}\norm\g_{\LR{\frac{3s}{3s-2}}\np\Omega}.
\end{align*}
Using  H\"older's inequality, we deduce
\begin{align*}
\snorm{I_1} \leq \norm\f_{\LR{s}\np\Omega} \norm{\grad\Phi}_{\LR{s'}\np\Omega} \leq \Cc c \norm\f_{\LR{s}\np\Omega} \norm{\g}_{\LR{\frac{3s}{3s-2}}\np\Omega}.
\end{align*}
To find a similar estimate for $I_2$, we twice integrate by parts to deduce
\begin{align*}
I_2 &= \int_{\partial\Omega} \partial_{x_j}\uvel_i \, \partial_{x_i}\Phi \, \normalvec_j \dxp - \int_\Omega \partial_{x_j}\uvel_i \, \partial_{x_j}\partial_{x_i}\Phi \dx \\
&= \int_{\partial\Omega} \np{\partial_{x_j}\uvel_i \, \partial_{x_i}\Phi \, \normalvec_j - \partial_{x_j}\uvel_i \, \partial_{x_j}\Phi \, \normalvec_i} \dxp,
\end{align*}
where we utilized the Einstein summation convention and the fact that $\Div\uvel= 0$.
Hence, applying H\"older's inequality,
Sobolev embeddings and a trace inequality (see
\cite[Theorem II.4.1]{Galdi}), we obtain
\begin{align*}
\snorm{I_2} 
&\leq \Cc c \norm{\grad\uvel}_{\LR{s}\np{\partial\Omega}} \norm{\grad\Phi}_{\LR{s'}\np{\partial\Omega}} \leq \Cc{c} \norm{\grad\uvel}_{\LR{s}\np{\partial\Omega}} \norm{\grad\Phi}_{\WSR{1}{\frac{3s}{3s-2}}\np{\Omega}} \\
&\leq \Cc{c} \bbp{\norm{\grad\uvel}_{\LR{s}\np{\Omega}} + \norm{\grad\uvel}_{\LR{s}\np{\Omega}}^{\frac{s-1}{s}} \norm{\grad\uvel}_{\WSR{1}{s}\np{\Omega}}^{\frac{1}{s}}} \norm{\g}_{\LR{\frac{3s}{3s-2}}\np{\Omega}}.
\end{align*}
The estimates for the final three integrals will be established similarly to
the estimates of $I_2$ by an application of the same trace inequality as above.
It follows that
\begin{align*}
\snorm{I_3} \leq \norm{\fp}_{\LR{s}\np{\torusnn}} \norm{\Phi}_{\LR{s'}\np{\torusnn}} \leq \Cc{c} \norm{\fp}_{\LR{s}\np{\torusnn}} \norm{\Phi}_{\WSR{1}{s'}\np{\Omega}} \leq \Cc{c} \norm{\fp}_{\LR{s}\np{\torusnn}} \norm{\g}_{\LR{\frac{3s}{3s-2}}\np\Omega}.
\end{align*}
In order to utilize the same arguments as for $I_2$, we integrate by
parts in $I_4$ and $I_5$ to find
\begin{align*}
&\snorm{I_4} = \snormL{\int_{\torusnn} \gradp\Deltap\eta \cdot \gradp\Phi \dxp} \leq \Cc{c} \norm{\gradp\Deltap\eta}_{\LR{s}\np{\torusnn}} \norm{\g}_{\LR{\frac{3s}{3s-2}}\np{\Omega}}, \\
&\snorm{I_5} = \snormL{\int_{\torusnn} \gradp\pt\eta \cdot \gradp\Phi \dxp} \leq \Cc{c} \norm{\gradp\pt\eta}_{\LR{s}\np{\torusnn}} \norm{\g}_{\LR{\frac{3s}{3s-2}}\np{\Omega}}.
\end{align*}
It follows from the estimates of $I_1$--$I_5$ that
\begin{align*}
\snormL{\int_{\Omega} \upres \g \dx} &\leq \Cc{c}\bbp{\norm{\f}_{\LR{s}\np\Omega} + \norm{\fp}_{\LR{s}\np{\torusnn}} + \norm{\grad\uvel}_{\LR{s}\np{\Omega}}^{\frac{s-1}{s}} \norm{\grad\uvel}_{\WSR{1}{s}\np{\Omega}}^{\frac{1}{s}} \\
&\quad + \norm{\grad\uvel}_{\LR{s}\np{\Omega}} + \norm{\gradp\Deltap\eta}_{\LR{s}\np{\torusnn}} + \norm{\gradp\pt\eta}_{\LR{s}\np{\torusnn}}} \norm{\g}_{\LR{\frac{3s}{3s-2}}\np{\Omega}}.
\end{align*}
By using the duality $\bp{\LR{\frac{3}{2}s}\np\Omega}' = \LR{\frac{3s}{3s-2}}\np{\Omega}$, we obtain \eqref{PressureFieldLqEst}.

To show \eqref{GradPressureFieldLqEst}, we
let $\chi\in\CRci\np\R$ be a cut-off function such that
\begin{align*}
\chi\np{x_3} = 1 \quad\text{for } x_3\in \bbp{\frac{1}{3}, \frac{2}{3}}, \quad\text{and }\quad \chi\np{x_3} = 0 \quad\text{for } x_3\in \R\setminus\bbb{\frac{1}{6}, \frac{5}{6}}.
\end{align*}
We then set $\wpres\np{x}\coloneqq\chi\np{x_3}\upres\np{x}$,
$\f_1\coloneqq 2\upres\grad\chi + \f\chi$ and
$\f_2\coloneqq \upres\Delta\chi + \f\cdot\grad\chi$,
and observe that $\wpres$
is a solution to the weak Neumann problem
\begin{align}\label{WeakNeumannPressure}
\int_\Omega \grad\wpres\cdot\grad\phi \dx = \int_\Omega \f_1 \cdot \grad\phi \dx + \int_\Omega \f_2 \, \phi \dx \qquad \forall\phi\in\CRci\np{\overline\Omega}.
\end{align}
Since $\supp\f_1, \, \supp\f_2 \subset \torusnn\times \bp{\frac{1}{6}, \frac{5}{6}}$,
we clearly have
\begin{align*}
&\sup_{\norm{\grad\phi}_{\LR{s'}\np\Omega} = 1}\snormL{\int_\Omega \f_1 \cdot \grad\phi \dx} \leq \Cc{c}\bp{\norm{\f}_{\LR{s}\np\Omega} + \norm{\upres}_{\LR{s}\np{\Omega}}}, \\
&\sup_{\norm{\grad\phi}_{\LR{s'}\np\Omega} = 1}\snormL{\int_\Omega \f_2 \cdot \phi \dx} \leq \Cc{c}\bp{\norm{\f}_{\LR{s}\np\Omega} + \norm{\upres}_{\LR{s}\np{\Omega'}}}.
\end{align*}
Finally, \eqref{GradPressureFieldLqEst} follows via a standard \textit{a priori} estimate
for the weak Neumann problem \eqref{WeakNeumannPressure} and $\upres_{|\torusnn\times\np{\frac{1}{3}, \frac{2}{3}}} = \wpres_{|\torusnn\times\np{\frac{1}{3}, \frac{2}{3}}}$.
\end{proof}

We can now establish $\LR{q}$ estimates for the purely periodic part of problem \eqref{LinearizedSystem}.

\begin{lem}\label{LemLqEstPeriLayer}
Let $q\in (1, \infty)$. For every $\np{\f, \fp}\in\LRcompl{q}\np{\torus\times\Omega}^3\times\LRcompl{q}\bp{\torus; \WSR{1-\frac{1}{q}}{q}\np\torusnn}$ and $\g=0$ there is a unique solution 
\begin{align*}
\np{\uvel, \upres, \eta}\in\SSsigmacompl{q}\np{\torus\times\Omega}
\end{align*}
to \eqref{LinearizedSystem}. Moreover
\begin{align}\label{EstFSPeriLayer}
\begin{aligned}
&\norm{\uvel}_{\WSR{1,2}{q}\np{\torus\times\Omega}} + \norm{\grad\upres}_{\LR{q}\np{\torus\times\Omega}} + \norm{\eta}_{\WSR{2}{q}\np{\torus; \WSR{1-\frac{1}{q}}{q}\np{\torusnn}}\cap \LR{q}\np{\torus; \WSR{5-\frac{1}{q}}{q}\np{\torusnn}}} \\
&\qquad\qquad\qquad\qquad\qquad\qquad\qquad \leq \Cclast{C}\bp{\norm\f_{\LR{q}\np{\torus\times\Omega}} + \norm\fp_{\LR{q}\np{\torus; \WSR{1-\frac{1}{q}}{q}\np\torusnn}}}
\end{aligned}
\end{align}
with $\Cc{C} = \Cclast{C}\np{q, \per, \pert}>0$.
\end{lem}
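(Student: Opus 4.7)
The plan is a localization argument reducing \eqref{LinearizedSystem} on the cuboid $\Omega$ to the half-space estimate of Lemma \ref{POPFSExistence} near the plate at $x_3=0$ and to classical $\LR{q}$-maximal regularity for the time-periodic Stokes problem in a neighbourhood of the rigid face $x_3=1$. Uniqueness is already furnished by Lemma \ref{FSLinUniquenessLem}, and existence in $\SSsigmacompl{q}$ is provided by Lemma \ref{EsistenceLinTPProblem} for smooth data and extends to arbitrary admissible data by density, once the a priori estimate below is in place.

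For the a priori estimate I fix smooth cut-offs $\chi_0,\chi_1\in\CRi(\R)$ depending only on $x_3$ with $\chi_0+\chi_1\equiv 1$ on $[0,1]$, $\chi_0\equiv 1$ on $[0,1/3]$ and $\chi_0\equiv 0$ on $[2/3,1]$, and set
\begin{align*}
\uvel_0 := \chi_0\uvel - \Bogovskii(\chi_0'\,\uvel_3),\qquad \upres_0 := \chi_0\upres,
\end{align*}
with the Bogovski\u{\i} operator of Theorem \ref{BogovskiiThm} constructed on the interior strip $S:=\torusnn\times(1/3,2/3)$. Because $\grad\chi_0=\chi_0'\,e_3$ is supported in $S$, integration by parts together with $\Div\uvel=0$, the no-slip condition at $x_3=1$ and the volume constraint $\int_{\torusnn}\eta\dxp=0$ gives $\int_S\chi_0'\,\uvel_3\dx = -\pt\int_{\torusnn}\eta\dxp = 0$, so the Bogovski\u{\i} operator is applicable. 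By construction $\Div\uvel_0=0$ and $\uvel_0=-\pt\eta\,e_3$ on $\torus\times\torusnn$; after extending by zero to $x_3>2/3$, the triple $(\uvel_0,\upres_0,\eta)$ solves \eqref{FullyInhomLinearizedSystemtem} on $\torus\times\Omegap$ with right-hand side $(\f,0,\fp)$ plus a commutator remainder $R_0$ involving $\chi_0'\,\upres$, $(\Delta\chi_0)\uvel$, $\chi_0'\grad\uvel$ and the time-derivative and Laplacian terms of $\Bogovskii(\chi_0'\,\uvel_3)$, all supported in $S$. Lemma \ref{POPFSExistence} then yields the full $\SSsigmacompl{q}$-bound for $(\uvel_0,\upres_0,\eta)$ in terms of $(\f,\fp)$ and these commutators. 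An analogous construction near $x_3=1$, namely $(\uvel_1,\upres_1):=(\chi_1\uvel+\Bogovskii(\chi_0'\,\uvel_3),\chi_1\upres)$ (using $\chi_1'=-\chi_0'$), reduces the complementary piece to a time-periodic Stokes problem on $\torus\times\Omega$ with homogeneous Dirichlet boundary data, whose $\LR{q}$-maximal regularity is classical. Summing the two partial estimates and using $\uvel=\uvel_0+\uvel_1$ produces
\begin{align*}
\norm{(\uvel,\upres,\eta)}_{\SSsigmacompl{q}} \le C\bp{\norm{\f}_{\LR{q}\np{\torus\times\Omega}} + \norm{\fp}_{\LR{q}\np{\torus;\WSR{1-1/q}{q}\np{\torusnn}}}} + C\,\norm{(\uvel,\upres)}_{\LR{q}\np{\torus\times S}}.
\end{align*}

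The principal obstacle is the absorption of the lower-order remainder $\norm{(\uvel,\upres)}_{\LR{q}(\torus\times S)}$. The velocity part is routine: Ehrling's lemma applied to the compact embedding $\WSR{2}{q}(\Omega)\hookrightarrow\LR{q}(\Omega)$, combined with a Poincar\'e-type inequality on the purely oscillatory subspace $\PR\uvel=0$, gives $\norm{\uvel}_{\LR{q}(\torus\times\Omega)}\le \varepsilon\norm{\uvel}_{\WSR{1,2}{q}} + C_\varepsilon\,\norm{(\f,\fp)}_{\DAS{q}}$. The pressure part is the delicate one and is the very reason for Lemma \ref{LemPressureFieldEst}: its estimate \eqref{PressureFieldLqEst}, raised to the $q$-th power and integrated over $t\in\torus$, controls $\norm{\upres}_{\LR{q}(\torus;\LR{3q/2}(\Omega))}$, which dominates $\norm{\upres}_{\LR{q}(\torus\times S)}$ via the embedding $\LR{3q/2}(\Omega)\hookrightarrow\LR{q}(\Omega)$ on the bounded domain. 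The right-hand side of \eqref{PressureFieldLqEst} consists of data together with quantities already controlled by $\norm{(\uvel,\upres,\eta)}_{\SSsigmacompl{q}}$; the interpolated term $\norm{\grad\uvel}_{\LR{q}}^{(q-1)/q}\norm{\grad\uvel}_{\WSR{1}{q}}^{1/q}$ is split by Young's inequality as $\varepsilon\norm{\grad\uvel}_{\WSR{1}{q}} + C_\varepsilon\norm{\grad\uvel}_{\LR{q}}$, absorbed into the left-hand side for $\varepsilon$ sufficiently small, and the remaining $\LR{q}$-contributions are handled by the same Ehrling-type compactness. Any residual lower-order term that fails to be absorbed directly is eliminated by a standard Peetre-type contradiction argument based on the uniqueness of Lemma \ref{FSLinUniquenessLem}, which closes the estimate \eqref{EstFSPeriLayer}. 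Existence for general data finally follows by approximating $(\f,\fp)$ by smooth data, applying Lemma \ref{EsistenceLinTPProblem} to the approximants, and passing to the limit by virtue of \eqref{EstFSPeriLayer}.
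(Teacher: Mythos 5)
Your proposal reaches the same a priori estimate with the same ingredients (the half-space Lemma~\ref{POPFSExistence}, the pressure Lemma~\ref{LemPressureFieldEst}, Ehrling/Young, and a compactness--uniqueness contradiction via Lemma~\ref{FSLinUniquenessLem}), but the localization step is genuinely different from the paper's. The paper performs an even/odd reflection of $(\uvel,\upres)$ across the rigid face $x_3=1$, damped by a cut-off, thereby producing a \emph{single} half-space problem of the form \eqref{FullyInhomLinearizedSystemtem} whose divergence defect is then repaired with a Bogovski\u{\i} correction; everything is handled by one application of Lemma~\ref{POPFSExistence}. You instead use a partition of unity $\chi_0+\chi_1=1$, fixing the divergence of each piece with a Bogovski\u{\i} operator supported in the interior strip $S$, and handle the piece near $x_3=1$ as a \emph{decoupled} time-periodic Stokes problem on the periodic layer $\torusnn\times(0,1)$ with homogeneous Dirichlet data. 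This is cleaner in the sense that the support of each correction is explicitly localized in $S$, and it separates the plate-coupled and plate-free parts transparently; the cost is that you invoke $\LR{q}$ maximal regularity for time-periodic Stokes on $\torusnn\times(0,1)$ as ``classical'', whereas this statement is not in the paper's toolbox and would itself require an argument (in fact precisely the reflection the paper uses). Two smaller cautions: (i) your claim that Ehrling plus a Poincar\'e inequality on the oscillatory subspace already yields $\norm{\uvel}_{\LR{q}(\torus\times\Omega)}\le \varepsilon\norm{\uvel}_{\WSR{1,2}{q}}+C_\varepsilon\norm{(\f,\fp)}$ overreaches — Ehrling only trades $\norm{\uvel}_{\LR{q}(\torus;\WSR{1}{q})}$ for $\varepsilon\norm{\uvel}_{\LR{q}(\torus;\WSR{2}{q})}+C_\varepsilon\norm{\uvel}_{\LR{q}(\torus\times\Omega)}$, and the residual $\norm{\uvel}_{\LR{q}(\torus\times\Omega)}$ must be eliminated by the final contradiction argument, not before it; and (ii) when estimating the $\pt\Bogovskii(\chi_0'\uvel_3)$ commutator you will need the paper's identity $\pt\Bogovskii(\chi_0'\uvel_3)=\Bogovskii(\chi_0'\pt\uvel_3)$ followed by a substitution of $\pt\uvel_3$ from the Stokes equation, since $\pt\uvel$ alone is not controlled in $\WSR{1}{q}$ but only in $\LR{q}$; your write-up should make this explicit. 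With those two points addressed, the argument closes.
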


\begin{proof}
It suffices to show the statement for $\np{\ff,\fp}\in\CRcicompl\np{\torus\times\Omega}^3\times\CRcicompl\np{\torust}$; the general case then follows via \eqref{EstFSPeriLayer} by a density argument.
Existence and uniqueness follow from Lemma \ref{EsistenceLinTPProblem} and Lemma \ref{FSLinUniquenessLem}.
To show \eqref{EstFSPeriLayer}, we
mimic the steps in \cite[Proof of Theorem 5.1]{GaldiKyed}
and employ Lemma \ref{POPFSExistence}.
Let
$\chi\in\CRi(\R)$ such that
\begin{align*}
\chi\np{x_3} = 1 \quad\text{for } x_3\leq \frac{1}{3}, \quad\text{and }\quad \chi\np{x_3} = 0 \quad\text{for } x_3\geq \frac{2}{3}.
\end{align*}
We define
the even and odd extensions of a function $\phi$ as
\begin{align*}
\phi^e(t, x) &:= \begin{cases} \phi(t, x',x_{3}) &  0\leq x_{3}\leq 1, \\ (1 - \chi(2-x_{3}))\phi(t, x', 2-x_{3}) &  1<x_{3}, \end{cases} \\
\phi^o(t, x) &:= \begin{cases} \phi(t, x',x_{3}) &  0\leq x_{3}\leq 1, \\ -(1 - \chi(2-x_{3}))\phi(t, x', 2-x_{3}) &  1<x_{3}. \end{cases}
\end{align*}
Letting
\begin{align*}
\Uvel\coloneqq\np{\uvel_1^e, \uvel_2^e, \uvel_3^o} \qquad\text{and}\qquad       \Upres\coloneqq \upres^e,
\end{align*}
we have extended $\np{\uvel, \upres, \eta}$ to a solution
$\np{\Uvel, \Upres, \eta}$ to \eqref{FullyInhomLinearizedSystemtem}
in the half space with right-hand side
$\np{\f, \g, \h} = \np{\fr, \gr, \fp}$ with 
\begin{align}\label{LqEstExtF}
\begin{aligned}
&\fr_j\np{t, x} := \begin{cases}
\f_j\np{t, x} & \text{if } x_3<1, \\
\varepsilon_j \bp{(1 - \chi(2-x_3))\f_j(t, x', 2-x_3) - H_j\np{t, x}} & \text{if } x_3>1,
\end{cases} \\
&\gr\np{t, \xprime, x_3} := \begin{cases}
0 & \text{if } x_3<1, \\
-\chi'\np{2-x_3}\,\uvel_3\np{t, \xprime, 2-x_3} & \text{if } x_3>1,
\end{cases}
\end{aligned}
\end{align}
where 
$\varepsilon_j := 1$ for $j=1,2$, $\varepsilon_3 := -1$, and
\begin{align}\label{Defh}
\begin{aligned}
H\np{t, x} := &\chi''\np{2-x_3}\uvel\np{t,\xprime,2-x_3} + 2\partial_{x_3}\uvel\np{t,\xprime,2-x_3}\chi'\np{2-x_3} \\
&+ \chi'\np{2-x_3} \, \upres\np{t,\xprime, 2-x_3} \, e_3.
\end{aligned}
\end{align}
Observer that $\Uvel\np{t,\xprime,0} = \uvel\np{t,\xprime,0}$ and $\Upres\np{t,\xprime,0} = \upres\np{t,\xprime,0}$.
Due to the identity
\begin{align*}
\chi'\Delta\uvel_3 = \Div\nb{\chi' \grad\uvel_3} - \partial_{x_3}\uvel_3 \chi''
\end{align*}
we deduce by \cite[Theorem III.3.4]{Galdi} and \eqref{BogovskiiEst1} that
\begin{align}\label{EstLqBogogvskii}
\begin{aligned}
\norm{\Bogovskii\np{\chi'\Delta\uvel_3}}_{\LR{q}\np{\torus\times\Omega}} &\leq \norm{\Bogovskii\np{\Div\nb{\chi' \grad\uvel_3}}}_{\LR{q}\np{\torus\times\Omega}} + \norm{\Bogovskii\np{\partial_{x_3}\uvel_3 \chi''}}_{\LR{q}\np{\torus\times\Omega}} \\
&\leq \Cc{c}\np{\chi}\norm{\uvel}_{\LR{q}\np{\torus; \WSR{1}{q}\np\Omega}}.
\end{aligned}
\end{align}
Letting $\Vvel\coloneqq\Bogovskii\np{\chi' \uvel_3}$, we have $\supp\Vvel\subset\QT\coloneqq\torust\times\bp{\frac{1}{3}, \frac{2}{3}}$ and
in view of \eqrefsub{FullyInhomLinearizedSystemtem}{2} and
the identity $\pt\Vvel=\Bogovskii\np{\chi' \pt\uvel_3}$ that
\begin{align*}
\norm{\pt\Vvel}_{\LR{q}\np{\torus\times\Omega}} &\leq \Cc{c}\bp{\norm{\Bogovskii\np{\chi' \f_3}}_{\LR{q}\np{\QT}} + \norm{\Bogovskii\np{\chi' \Delta\uvel_3}}_{\LR{q}\np{\QT}} + \norm{\Bogovskii\np{\chi' \partial_{x_3}\upres}}_{\LR{q}\np{\QT}}} \\
&\leq \Cc{c}\np\chi\bp{\norm{\f}_{\LR{q}\np{\torus\times\Omega}} + \norm{\uvel}_{\LR{q}\np{\torus; \WSR{1}{q}\np\Omega}} + \norm{\grad\upres}_{\LR{q}\np{\QT}}}.
\end{align*}
Now utilizing \eqref{GradPressureFieldLqEst} and \eqref{BogovskiiEst1},
we obtain 
\begin{equation}\label{LqEstFSDiv}
\norm\Vvel_{\WSR{1,2}{q}\np{\torus\times\Omegap}} \leq \Cc{c}\bp{\norm\f_{\LR{q}\np{\torus\times\Omega}} + \norm{\uvel}_{\LR{q}\np{\torus; \WSR{1}{q}\np\Omega}} + \norm{\upres}_{\LR{q}\np{\torus\times\Omega}}}.
\end{equation}
By setting
\begin{align*}
\wvel\colon\torus\times\Omegap\to\R^3, \quad \wvel = \Uvel - \Vvel, \qquad\text{and}\qquad \wpres\colon\torus\times\Omegap\to\R, \quad \wpres = \Upres,
\end{align*}
we obtain a solution $\np{\wvel, \wpres, \eta}\in\SSsigmacompl{q}\np{\torus\times\Omegap}$
to the half space problem \eqref{FullyInhomLinearizedSystemtem} with
$\np{\f, \g, \h} = \np{\fr-\nb{\pt - \Delta}\Vvel, 0, \fp}$.
Moreover, due to Lemma \ref{POPFSExistence} and
\eqref{LqEstFSDiv} $\np{\wvel, \wpres, \eta}$ obeys
\begin{align}\label{LqEstFSwvel}
\begin{aligned}
&\norm{\wvel}_{\WSR{1,2}{q}\np{\torus\times\Omegap}} + \norm{\grad\wpres}_{\LR{q}\np{\torus\times\Omegap}} + \norm{\eta}_{\WSR{2}{q}\np{\torus; \WSR{1-\frac{1}{q}}{q}\np{\torusnn}}\cap \LR{q}\np{\torus; \WSR{5-\frac{1}{q}}{q}\np{\torusnn}}}\\
& \quad \leq \Cc{C}\bp{\norm\fr_{\LR{q}\np{\torus\times\Omegap}} + \norm\f_{\LR{q}\np{\torus\times\Omega}} + \norm\fp_{\LR{q}\np{\torus; \WSR{1-\frac{1}{q}}{q}\np\torusnn}} \\
& \qquad \qquad \quad + \norm{\uvel}_{\LR{q}\np{\torus; \WSR{1}{q}\np\Omega}} + \norm{\upres}_{\LR{q}\np{\torus\times\Omega}}}.
\end{aligned}
\end{align}
In view of \eqref{LqEstExtF} and \eqref{Defh},
we deduce
\begin{align*}
\norm\fr_{\LR{q}\np{\torus\times\Omegap}} &\leq \bp{\norm\f_{\LR{q}\np{\torus\times\Omega}} + \norm{\frr}_{\LR{q}\np{\torust\times (1, \infty)}} + \norm{\h}_{\LR{q}\np{\torust\times (1, \infty)}}} \\
&\leq \Cc{c}\np{\chi}\bp{\norm\f_{\LR{q}\np{\torus\times\Omega}} + \norm{\uvel}_{\LR{q}\np{\torus; \WSR{1}{q}\np\Omega}} + \norm{\upres}_{\LR{q}\np{\torus\times\Omega}}}
\end{align*}
with
\begin{align*}
\frr\np{t,x} := (1 - \chi(2-x_3))\f(t, x', 2-x_3),
\end{align*}
where the second inequality above follows by
utilizing H\"older's inequality and shifting the coordinates in 
the second and third norm above.
Therefore, we conclude from $\np{\uvel, \upres, \eta} = \np{\wvel_{|\torus\times\Omega}, \wpres_{|\torus\times\Omega}, \eta} + \np{\Vvel_{|\torus\times\Omega}, 0, 0}$
as well as the estimates \eqref{LqEstFSDiv} and \eqref{LqEstFSwvel} that
\begin{align}\label{BootStrapEst1}
\begin{aligned}
&\norm{\uvel}_{\WSR{1,2}{q}\np{\torus\times\Omega}} + \norm{\grad\upres}_{\LR{q}\np{\torus\times\Omega}} + \norm{\eta}_{\WSR{2}{q}\np{\torus; \WSR{1-\frac{1}{q}}{q}\np{\torusnn}}\cap \LR{q}\np{\torus; \WSR{5-\frac{1}{q}}{q}\np{\torusnn}}} \\
&\qquad \leq \Cc{c}\bp{\norm\f_{\LR{q}\np{\torus\times\Omega}} + \norm\fp_{\LR{q}\np{\torus; \WSR{1-\frac{1}{q}}{q}\np\torusnn}} + \norm{\uvel}_{\LR{q}\np{\torus; \WSR{1}{q}\np\Omega}} + \norm{\upres}_{\LR{q}\np{\torus\times\Omega}}}.
\end{aligned}
\end{align}
In order to complete the proof, it remains to show that the final two terms on the right-hand side in \eqref{BootStrapEst1} can be omitted. For this purpose,
we utilize Young's inequality and Ehrling's Lemma to deduce
\begin{align*}
&\int_\torus \bp{\norm{\grad\uvel}_{\LR{q}\np{\Omega}}^{\frac{q-1}{q}} \norm{\grad\uvel\np{t, \cdot}}_{\WSR{1}{q}\np{\Omega}}^{\frac{1}{q}}}^q \dt \leq \Cc{c}\np q \varepsilon^{-\frac{1}{q-1}}\norm{\grad\uvel}_{\LR{q}\np{\torus\times\Omega}}^{q} + \varepsilon\norm{\grad\uvel}_{\LR{q}\np{\torus; \WSR{1}{q}\np\Omega}}^{q},
\end{align*}
for any $\varepsilon>0$, as well as
\begin{align*}
\norm\uvel_{\LR{q}\np{\torus; \WSR{1}{q}\np\Omega}} \leq \Cc{c}\np{\delta}\norm\uvel_{\LR{q}\np{\torus\times\Omega}} + \delta\norm\uvel_{\LR{q}\np{\torus; \WSR{2}{q}\np\Omega}}
\end{align*}
with $\delta>0$.
Consequently,
\eqref{PressureFieldLqEst} and \eqref{BootStrapEst1} yield
\begin{align*}
\begin{aligned}
&\norm\uvel_{\WSR{1,2}{q}\np{\torus\times\Omega}} + \norm{\grad\upres}_{\LR{q}\np{\torus\times\Omega}} + \norm{\eta}_{\WSR{2}{q}\np{\torus; \WSR{1-\frac{1}{q}}{q}\np{\torusnn}}\cap \LR{q}\np{\torus; \WSR{5-\frac{1}{q}}{q}\np{\torusnn}}}\\
& \qquad \leq \Cc[Ehrling3]{c}\bp{\norm\f_{\LR{q}\np{\torus\times\Omega}} + \norm\fp_{\LR{q}\np{\torust}}  + \norm\uvel_{\LR{q}\np{\torus\times\Omega}} \\
& \qquad \qquad + \norm{\gradp\Deltap\eta}_{\LR{q}\np{\torus\times\torusnn}} + \norm{\gradp\pt\eta}_{\LR{q}\np{\torus\times\torusnn}}} + \Cc[Ehrling4]{c}\norm{\uvel}_{\LR{q}\np{\torus; \WSR{2}{q}\np\Omega}},
\end{aligned}
\end{align*}
with $\delta = \delta\np\varepsilon>0$, $\const{Ehrling3} = \const{Ehrling3}(q, \delta, \varepsilon)>0$ and $\const{Ehrling4} = \const{Ehrling4}(q, \delta, \varepsilon)>0$.
Observe that we have utilized Ehrling's Lemma to estimate the
third norm on the right-hand side in \eqref{BootStrapEst1}.
Choosing $\varepsilon$ and $\delta$ sufficiently small, we finally deduce
\begin{align}\label{LqEstWithLowerOrder}
\begin{aligned}
&\norm\uvel_{\WSR{1,2}{q}\np{\torus\times\Omega}} + \norm{\grad\upres}_{\LR{q}\np{\torus\times\Omega}} + \norm{\eta}_{\WSR{2}{q}\np{\torus; \WSR{1-\frac{1}{q}}{q}\np{\torusnn}}\cap \LR{q}\np{\torus; \WSR{5-\frac{1}{q}}{q}\np{\torusnn}}}\\
& \quad \leq \Cc{c}\bp{\norm\f_{\LR{q}\np{\torus\times\Omega}} + \norm\fp_{\LR{q}\np{\torust}} + \norm\uvel_{\LR{q}\np{\torus\times\Omega}}\\
& \qquad \qquad + \norm{\gradp\Deltap\eta}_{\LR{q}\np{\torus\times\torusnn}} + \norm{\gradp\pt\eta}_{\LR{q}\np{\torus\times\torusnn}}}.
\end{aligned}
\end{align}
Since the embeddings
\begin{align*}
&\WSR{1,2}{q}\np{\torus\times\Omega}\hookrightarrow\LR{q}\np{\torus\times\Omega}, \\
&\WSR{2}{q}\bp{\torus; \WSR{1-\frac{1}{q}}{q}\np{\torusnn}}\cap \LR{q}\bp{\torus; \WSR{5-\frac{1}{q}}{q}\np{\torusnn}}\hookrightarrow\WSR{1}{q}\bp{\torus; \WSR{1}{q}\np\Omega}, \\
&\WSR{2}{q}\bp{\torus; \WSR{1-\frac{1}{q}}{q}\np{\torusnn}}\cap \LR{q}\bp{\torus; \WSR{5-\frac{1}{q}}{q}\np{\torusnn}}\hookrightarrow\LR{q}\bp{\torus; \WSR{3}{q}\np\Omega}, 
\end{align*}
are compact and the solution to \eqref{LinearizedSystem}
with homogeneous right-hand side is zero by Lemma \ref{FSLinUniquenessLem},
the $\LR{q}$ estimate \eqref{EstFSPeriLayer}
follows by a standard contradiction argument.
\end{proof}

\subsection{Steady-state problem}\label{SteadyStateSection}

Applying the projection $\PR$ to the linear system \eqref{LinearizedSystem}, we obtain 
\begin{align}\label{FSLqEstSS}
\begin{pdeq}
\Bilaplace\etas &= \fps -e_3\cdot\bp{\fluidstress(\uvels,\upress)e_3}_{| x_3= 0} && \tin\torusn^2, \\
-\Delta\uvels + \grad\upress &= \ffs && \tin\Omega, \\
\Div\uvels &= \g_s && \tin\Omega, \\
\uvels &= 0 && \ton \partial\Omega,\\
\int_{\torusnn} \etas\np{\xprime} \dxp &= 0. &&
\end{pdeq}
\end{align}
This weakly coupled system of elliptic equations (Stokes problem coupled with the bi-harmonic equation) is relatively simple to analyze.

\begin{lem}\label{PropLqEstPeriLayerSS}
Let $q\in (1, \infty)$. For $\np{\ffs, \fps}\in\LR{q}\np\Omega^3\times\WSR{1-\frac{1}{q}}{q}\np\torusnn$ and $\g_s=0$
there is a unique solution
$\np{\uvels, \upress, \etas}\in\WSR{2}{q}\np\Omega \times \WSR{1}{q}\np\Omega \times \WSR{5-\frac{1}{q}}{q}\np\torusnn$
to \eqref{FSLqEstSS} satisfying
\begin{align}\label{LqEstSSFS}
\norm\uvels_{\WSR{2}{q}\np\Omega} + \norm{\upress}_{\WSR{1}{q}\np\Omega} + \norm\etas_{\WSR{5-\frac{1}{q}}{q}\np\torusnn} \leq \Cc{C}\bp{\norm\ffs_{\LR{q}\np\Omega} + \norm{\fps}_{\WSR{1-\frac{1}{q}}{q}\np\torusnn}}
\end{align}
with $\Cclast{C} = \Cclast{C}\np{q} > 0$.
\end{lem}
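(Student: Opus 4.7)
The plan is to exploit the fact that when $\g_s = 0$, the steady-state system decouples: the no-slip condition $\uvels = 0$ on all of $\partial\Omega$ (in particular on the bottom face) makes the Stokes subsystem \eqrefsub{FSLqEstSS}{2-4} independent of $\etas$. I would therefore solve the Stokes problem first, compute the resulting fluid stress on $x_3 = 0$, and finally solve the biharmonic equation for $\etas$.

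\textbf{Step 1: Solve the Stokes problem.} Since $\Omega = \torusnn \times (0,1)$ is a torus-type domain with smooth lateral structure and $\uvels = 0$ on $\partial\Omega$, classical steady Stokes theory yields a unique velocity $\uvels \in \WSR{2}{q}\np\Omega^3$ together with a pressure $\tupress \in \WSR{1}{q}\np\Omega$, determined up to an additive constant, satisfying
\[
\norm{\uvels}_{\WSR{2}{q}\np\Omega} + \norm{\tupress - \overline{\tupress}}_{\WSR{1}{q}\np\Omega} \leq \Cc{C}\,\norm{\ffs}_{\LR{q}\np\Omega},
\]
where $\overline{\tupress}$ denotes the mean of $\tupress$ over $\Omega$. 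Here I use the Bogovski\u{\i} operator from Theorem \ref{BogovskiiThm} to handle the pressure in the standard way, or equivalently invoke the $\LR{q}$ Stokes estimate for the cuboid torus domain.

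\textbf{Step 2: Fix the pressure constant via the compatibility condition.} The biharmonic equation \eqrefsub{FSLqEstSS}{1} on the closed manifold $\torusnn$ is solvable (for $\etas$ of vanishing mean) only if the right-hand side has vanishing mean. Since $e_3 \cdot \bp{\fluidstress(\uvels,\upress)e_3}_{|x_3=0} = 2\partial_{x_3}\uvels_3 - \upress$ at $x_3 = 0$, shifting $\upress = \tupress + c$ changes the mean of the right-hand side by $-c\,\snorm{\torusnn}$. I therefore choose
\[
c \coloneqq \frac{1}{\snorm{\torusnn}} \int_{\torusnn} \bp{\fps - 2\partial_{x_3}\uvels_{3|x_3=0} + \tupress_{|x_3=0}} \dxp,
\]
which makes the compatibility condition hold. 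The trace theorem combined with the Stokes estimate of Step 1 bounds $\snorm{c}$ by the data norms, so the normalized pressure $\upress \coloneqq \tupress + c$ satisfies $\norm{\upress}_{\WSR{1}{q}\np\Omega} \leq \Cc{C}\bp{\norm{\ffs}_{\LR{q}\np\Omega} + \norm{\fps}_{\WSR{1-1/q}{q}\np\torusnn}}$.

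\textbf{Step 3: Solve the biharmonic plate equation.} With the right-hand side now having vanishing mean and lying in $\WSR{1-\frac{1}{q}}{q}\np\torusnn$ by the trace theorem (\cite[Theorem II.4.3]{Galdi} applied in the form used in Lemma \ref{POPFSExistence}), I solve
\[
\Bilaplace\etas = \fps - e_3 \cdot \bp{\fluidstress(\uvels,\upress)e_3}_{|x_3=0}, \qquad \int_{\torusnn}\etas \dxp = 0
\]
on $\torusnn$. Since $\Bilaplace$ on the flat torus is explicitly invertible via Fourier series on the subspace of mean-zero functions (the Fourier multiplier $\snorm{\xip}^{-4}$ on $\np{\tfrac{2\pi}{\pert}\Z}^2 \setminus \{0\}$), standard elliptic regularity on the torus yields $\etas \in \WSR{5-\frac{1}{q}}{q}\np\torusnn$ with the corresponding estimate, producing \eqref{LqEstSSFS}.

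\textbf{Uniqueness.} For $\ffs = \fps = 0$, Step 1 gives $\uvels = 0$ and $\tupress = 0$, so the plate equation reduces to $\Bilaplace\etas = c$ with $\int_{\torusnn}\etas = 0$; integration forces $c = 0$, and then $\Bilaplace\etas = 0$ on the closed torus together with the mean constraint forces $\etas = 0$, and hence $\upress = c = 0$ as well. I expect no substantial obstacle in this proof; the only non-routine point is the pressure shift in Step 2, which must be executed carefully to ensure the biharmonic compatibility condition holds without spoiling the $\LR{q}$ estimate.
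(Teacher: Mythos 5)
Your proof takes essentially the same route as the paper's: solve the steady Stokes subsystem, normalize the pressure by an additive constant so the biharmonic right-hand side has zero mean over $\torusnn$, invert $\Bilaplace$ on the mean-zero subspace via the Fourier multiplier $\snorm{\xip}^{-4}$ (with De Leeuw's transference giving the $\LR{q}$ bound), and read off uniqueness by integrating the plate equation. One small slip to fix in Step 2: with your convention the right-hand side of the plate equation at $x_3=0$ is $\fps - 2\partial_{x_3}\uvels_3 + \upress$, so replacing $\upress = \tupress + c$ \emph{raises} its mean by $c\snorm{\torusnn}$; you therefore need $c$ to be \emph{minus} the average you wrote. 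You may also observe, as the paper does, that $\Div\uvels = 0$ together with $\uvels = 0$ on $x_3 = 0$ forces $\partial_{x_3}\uvels_3|_{x_3=0} = 0$, so the traction term reduces to $-\upress|_{x_3=0}$ and the normalization is just a mean-zero shift of the pressure trace; your argument via the trace theorem handles the extra $2\partial_{x_3}\uvels_3$ contribution anyway, so this is merely a simplification.
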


\begin{proof}
Existence of a solution $(\uvels,\tupress)$ to the steady-state Stokes system \eqrefsub{FSLqEstSS}{2-4} that is unique up-to addition of a constant to $\tupress$ and satisfying 
\begin{align}\label{EstSSStokes}
\norm\uvels_{\WSR{2}{q}\np\Omega} + \norm{\grad\tupress}_{\LR{q}\np\Omega} \leq \Cc{c}\norm\ffs_{\LR{q}\np\Omega}
\end{align}
follows by the same arguments typically employed in the analysis of the steady-state Stokes problem set in classical bounded subdomains of $\R^n$; see for example \cite[Theorem IV.6.1]{Galdi}.
Letting
\begin{align}\label{PropLqEstPeriLayerSSNormalization}
\upress\coloneqq \tupres - \frac{1}{\snorm\torusnn}\int_{\torusnn} \fps \dxp - \frac{1}{\snorm\torusnn}\int_{\torusnn} \tupres \dx
\end{align}
we obtain a solution to 
\begin{align}\label{FSLqEstBiLaplacian}
\begin{pdeq}
&\Bilaplace\etas = \fps  -e_3\cdot\bp{\fluidstress(\uvels,\upress)e_3}_{|x_3 = 0}  \qquad\tin\torusn^2,\\
&\int_{\torusnn} \etas\np{\xprime} \dxp = 0
\end{pdeq}
\end{align}
via the formula
\begin{align}\label{PropLqEstPeriLayerSSRepFormula}
\etas\coloneqq\iFT_{\torusnn}\Bb{\frac{1-\delta_{\np{\frac{2\pi}{\pert}\Z}^2}\np\xip}{\snorm{\xip}^4}\FT_{\torusnn}\bb{\fps  -e_3\cdot\bp{\fluidstress(\uvels,\upress)e_3}_{|x_3 = 0} }},
\end{align}
where $\delta_{\frac{2\pi}{\pert}\Z}$ denotes the Dirac measure on the group $\np{\frac{2\pi}{\pert}\Z}^2$.
The term $1-\delta_{\frac{2\pi}{\pert}\Z}\np\xip$ appears in the numerator above due to \eqref{PropLqEstPeriLayerSSNormalization}, which implies 
\begin{align*}
\FT_{\torusnn}\bb{ \fps  -e_3\cdot\bp{\fluidstress(\uvels,\upress)e_3}_{|x_3 = 0}}(0) =0
\end{align*}
since $e_3\cdot\bp{\fluidstress(\uvels,\upress)e_3}_{|x_3 = 0}=\upres_{\mathrm{s} |x_3 = 0}$ on $\torusnn$.
The term $1-\delta_{\frac{2\pi}{\pert}\Z}\np\xip$ in \eqref{PropLqEstPeriLayerSSRepFormula} implies $\int_{\torusnn} \etas\np{\xprime} \dxp = 0$.
Utilizing \textsc{De Leeuw}'s Transference Principle \cite{Leeuw1965} on the Fourier multiplier in \eqref{PropLqEstPeriLayerSSRepFormula}, we obtain
\begin{align*}
\norm{\etas}_{\WSR{5-\frac{1}{q}}{q}\np\torusnn} &\leq \Cc{c}\norm{\fps  -e_3\cdot\bp{\fluidstress(\uvels,\upress)e_3}_{|x_3 = 0}}_{\WSR{1-\frac{1}{q}}{q}\np\torusnn} \\
&\leq\Cc{c}\bp{\norm\fps_{\WSR{1-\frac{1}{q}}{q}\np\torusnn} + \norm{\ffs}_{\LR{q}\np\Omega}}.
\end{align*}
By an application of Poincar\'e's inequality to $\upress$, we conclude \eqref{LqEstSSFS}.
\end{proof}

\subsection{Fully inhomogeneous linearized problem}\label{ProofOfLinMainThmSection}

Finally, we consider the fully inhomogeneous
system \eqref{LinearizedSystem} and establish Theorem \ref{MainThmFSLin}.

\begin{proof}[Proof of Theorem \ref{MainThmFSLin}]                                         
Recall Theorem \ref{BogovskiiThm} and set $\wvel\coloneqq\Bogovskii\np{\g}$. Since $\g$ has a vanishing mean over $\Omega$, $\wvel$ solves 
\begin{align}\label{DivProblemFSReduction}
\begin{pdeq}
\Div\wvel &= \g && \tin\torus\times\Omega, \\
\wvel &= 0 && \ton\torus\times\partial\Omega.
\end{pdeq}
\end{align}
Moreover,
\begin{align*}
&\norm{\wvel\np{t, \cdot}}_{\WSR{2}{q}\np\Omega} \leq \Cc{c}\norm{\g\np{t, \cdot}}_{\WSR{1}{q}\np\Omega}, \\
&\norm{\wvel\np{t, \cdot}}_{\LR{q}\np\Omega} \leq \Cc{c}\snorm{\g\np{t, \cdot}}_{-1, q}^*.
\end{align*}
Introducing the vector-field 
$\vvel \coloneqq \uvel - \wvel$, we reduce the investigation of \eqref{LinearizedSystem} with fully inhomogeneous right-hand side
to that of \eqref{LinearizedSystem} with right-hand side
\begin{align*}
\tff:=\ff - \pt\wvel + \Delta\wvel, \quad
\tg:=0, \quad
\tfp := \fp - \g_{|x_3=0}.
\end{align*}
Specifically, letting $\np{\uvels, \upress, \etas}\in\WSR{2}{q}\np\Omega \times \WSR{1}{q}\np\Omega \times \WSR{5-\frac{1}{q}}{q}\np\torusnn$ be the solution from Lemma \ref{PropLqEstPeriLayerSS} corresponding
to data $\np{\PR\tff,0,\PR\tfp}$, 
and $\np{\uveltp, \uprestp, \etatp}\in\SSsigmacompl{q}\np{\torus\times\Omega}$ the solution from Lemma \ref{LemLqEstPeriLayer} corresponding to data $\np{\oPR\tff,0,\oPR\tfp}$, then
\begin{align*}
\np{\uvel,\upres,\eta} := \np{\uvels+\uveltp+\wvel, \upress+\uprestp, \etas + \etatp} 
\end{align*}
solves \eqref{LinearizedSystem}.
From \eqref{EstFSPeriLayer} and \eqref{LqEstSSFS} it follows that 
\begin{align*}
&\norm{\np{\uvel, \upres, \eta}}_{\SLS{q}\np{\torus\times\Omega}} \leq \Cc{c}\,\norm{\np{\tff, 0, \tfp}}_{\DAS{q}\np{\torus\times\Omega}}\\
& \quad \leq \Cc{c} \bp{ \norm\ff_{\LR{q}\np{\torus\times\Omega}} + \norm\fp_{\LR{q}\np{\torus; \WSR{1-\frac{1}{q}}{q}\np\torusnn}} 
+ \norm{\Bogovskii\np{\pt\g}}_{\LR{q}\np{\torus\times\Omega}} + \norm{\wvel}_{\LR{q}\np{\torus; \WSR{2}{q}\np\Omega}}
}\\
& \quad \leq \Cc{c} \bp{ \norm\ff_{\LR{q}\np{\torus\times\Omega}} + \norm\fp_{\LR{q}\np{\torus; \WSR{1-\frac{1}{q}}{q}\np\torusnn}} 
+ \norm\g_{\LR{q}\np{\torus; \WSR{1}{q}\np\Omega}\cap\WSR{1}{q}\np{\torus; \WSRD{-1}{q}\np\Omega}}
}\\
& \quad\leq \Cc{c}\,\norm{\np{\ff, \g, \fp}}_{\DAS{q}\np{\torus\times\Omega}}.
\end{align*}
\end{proof}

\section{Proof of the main theorem}

Employing Theorem \ref{MainThmFSLin}, we shall now prove Theorem \ref{mainThm} using a fixed point argument. We need the following embedding properties to estimate the nonlinear terms in this approach:

\begin{thm}[Sobolev embedding]\label{SobEmbeddingThm}
Let $\Omega\subset\R^n$ ($n\geq 2$) be the whole space $\R^n$, the half space $\R^n_+$, or a bounded domain with a Lipschitz boundary. Let $q\in(1,\infty)$. Moreover, let $m\in\N$, $\Mt\in\N_0$ and $\mx\in\N_0^n$ such that 
\begin{align*}
0\leq\Mx+2\Mt\leq 2m,
\end{align*}
with $\Mx\coloneqq\snorm{\mx}$, and $\alpha, \beta \in [0, 2\np{m-\Mt}-\Mx]$ such that $\beta\coloneqq 2\np{m-\Mt}-\Mx-\alpha$. Assume that $p, r\in [q, \infty]$ satisfy
\begin{align*}
\begin{pdeq}
&r\leq \frac{2q}{2-\alpha q} && \tif\ \alpha q<2,\\
&r<\infty && \tif\ \alpha q=2,\\
&r\leq\infty && \tif\ \alpha q>2,
\end{pdeq}
\qquad
\begin{pdeq}
&p\leq \frac{nq}{n-\beta q} && \tif\  \beta q<{n},\\
&p<\infty && \tif\ \beta q={n},\\
&p\leq\infty && \tif\ \beta q>{n}.
\end{pdeq}
\end{align*}
Then there exists a constant $\Cc{C} = \Cclast{C}(\per, \Omega, p, q, r)>0$ such that 
\begin{align}\label{SobEmbeddingThmEst}
\norm{\partial_x^{\mx}\pt^{\Mt}\uvel}_{\LR{r}\np{\torus; \LR{p}(\Omega)}}\leq\Cclast{C}\norm{\uvel}_{\WSR{m,2m}{q}(\torus\times\Omega)}
\end{align}
holds for all $\uvel\in\WSRcompl{m,2m}{q}(\torus\times\Omega):=\WSR{m}{q}\bp{\torus; \LR{q}\np\Omega}\cap\LR{q}\bp{\torus; \WSR{2m}{q}\np\Omega}^3$.
\end{thm}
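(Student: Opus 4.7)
The plan is to combine Sobolev embedding in space with fractional Sobolev embedding on the temporal torus, mediated by an anisotropic Fourier multiplier estimate that redistributes the available parabolic regularity between time and space. I would first extend $\uvel$ from $\torus\times\Omega$ to $\torus\times\R^n$ by a bounded Stein-type extension operator, available for $\Omega = \R^n$, $\R^n_+$, or a bounded Lipschitz domain; the temporal factor is untouched since extensions act only in the spatial variable. This reduces the proof to the case $\Omega=\R^n$.

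On $\torus\times\R^n$, the norm on $\WSR{m,2m}{q}$ is equivalent, modulo lower-order terms, to the norm induced by the parabolic symbol $1 + \snorm{k}^m + \snorm{\xi}^{2m}$ on the dual group $\Zgrp\times\R^n$. The central step is to show that
\begin{align*}
\mmultiplier(k,\xi)\coloneqq \frac{(1+\snorm{k})^{\alpha/2}(1+\snorm{\xi})^\beta\,(ik)^{M_t}(i\xi)^{m_x}}{1 + \snorm{k}^m + \snorm{\xi}^{2m}}
\end{align*}
is an $\LR{q}$-Fourier multiplier on $\Zgrp\times\R^n$. In the parabolic scaling where $\snorm{k}$ counts as two units and $\snorm{\xi}$ as one, the numerator has total weight $2M_t + \snorm{m_x} + \alpha + \beta = 2m$, matching the denominator, so $\mmultiplier$ and its mixed derivatives satisfy the Marcinkiewicz--Mikhlin conditions after canonical extension to $\R\times\R^n$; \textsc{De Leeuw}'s transference principle then transfers the resulting $\LR{q}$-bound to $\Zgrp\times\R^n$, precisely as in the proof of Lemma \ref{POPFSExistence}. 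This is equivalent to the embedding $\partial_t^{M_t}\partial_x^{m_x}\uvel\in \HSR{\alpha/2}{q}(\torus; \HSR{\beta}{q}(\R^n))$ with a norm bound by $\norm{\uvel}_{\WSR{m,2m}{q}}$.

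Finally I would apply two classical Sobolev embeddings. For almost every $t\in\torus$, the spatial embedding $\HSR{\beta}{q}(\R^n)\hookrightarrow \LR{p}(\R^n)$ holds in the stated range of $p$, yielding $\partial_t^{M_t}\partial_x^{m_x}\uvel\in\HSR{\alpha/2}{q}(\torus; \LR{p}(\R^n))$ by continuity. Since $\LR{p}(\R^n)$ is a UMD Banach space for $1<p<\infty$, the vector-valued one-dimensional temporal Sobolev embedding $\HSR{\alpha/2}{q}(\torus;\LR{p}(\R^n))\hookrightarrow\LR{r}(\torus;\LR{p}(\R^n))$ holds in the stated range of $r$, which yields \eqref{SobEmbeddingThmEst}. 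The main technical obstacle is the verification of the Marcinkiewicz condition for $\mmultiplier$ with the correct anisotropic regularity count; the borderline cases $\alpha q = 2$ and $\beta q = n$ are handled by trading an arbitrarily small amount of regularity via H\"older's inequality before invoking the strict-inequality embeddings, and the supercritical cases $\alpha q > 2$ or $\beta q > n$ by the standard embedding of Bessel-potential spaces into $\LR{\infty}$.
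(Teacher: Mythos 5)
The paper does not prove this theorem; it simply cites \cite[Theorem 2.6.3]{CelikDiss}, so there is no in-text argument to compare yours against. That said, your outline is a standard and essentially correct route to this kind of anisotropic (parabolic) Sobolev embedding, and it is consistent with the multiplier--transference machinery the paper uses elsewhere (e.g.\ Lemma~\ref{POPFSExistence}). The reduction via a Stein-type spatial extension, the identification of $\WSR{m,2m}{q}\np{\torus\times\R^n}$ with the image of $\LR{q}$ under a Fourier multiplier equivalent to $1+\snorm{k}^m+\snorm{\xi}^{2m}$, the parabolic weight count $\alpha+\beta+2\Mt+\Mx=2m$ giving a degree-zero symbol, the transfer to $\Zgrp\times\R^n$ by \textsc{De Leeuw}, and the subsequent iterated scalar embeddings in $x$ and $t$ all fit together correctly, and the stated ranges for $p,r$ match the classical 1-D and $n$-D Sobolev indices with $s=\alpha/2$ and $s=\beta$, respectively.

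Two small technical points are worth tightening. First, as written the symbol is not smooth: $|k|^{m}$ for odd $m$ and $(1+\snorm{\xi})^{\beta}$, $(1+\snorm{k})^{\alpha/2}$ are not $\CRi$ near the axes, so one should replace them by the Japanese-bracket versions $\np{1+k^2}^{m/2}$, $\np{1+\snorm{\xi}^2}^{\beta/2}$, etc., which give an equivalent multiplier and are smooth; only then is the Marcinkiewicz verification legitimate. Second, invoking the UMD property of $\LR{p}$ for the vector-valued temporal embedding is both overkill and insufficient: it does not cover the admissible endpoint $p=\infty$ (when $\beta q>n$), and it is not needed anyway, since the Bessel potential $(1-\partial_t^2)^{-\alpha/4}$ on $\torus$ is convolution with a nonnegative integrable kernel, so $\HSR{\alpha/2}{q}\np{\torus;X}\hookrightarrow\LR{r}\np{\torus;X}$ follows for every Banach space $X$ by Minkowski's integral inequality combined with the scalar Hardy--Littlewood--Sobolev estimate. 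With these adjustments your argument is complete.
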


\begin{proof}
See \cite[Theorem 2.6.3]{CelikDiss}.
\end{proof}

To simplify notation, we again set $\mus=\muf=1$.
We start by reformulating \eqref{CoupledSystem} in the reference configuration. To this end, we recall the transformation $\phi_\eta$ from \eqref{Transformation} and
set
\begin{align*}
\tuvel:=\uvel\circ\phi_\eta, \quad \tupres:=\uvel\circ\phi_\eta.
\end{align*}
We thus obtain the following reformulating of \eqref{CoupledSystem} in the reference configuration:
\begin{align}\label{NonLinFSOnReference}
\begin{pdeq}
\pt^2\eta + \Bilaplace\eta- \Deltap\pt\eta &= \fp -e_3\cdot\bp{\fluidstress(\tuvel,\tupres)e_3}_{| x_3= 0}
+ \tRS && \tin\torus\times\torusn^2, \\
\pt\tuvel - \Delta\tuvel + \nsnl{\tuvel} + \grad\tupres &= \tff + \RFtilde && \tin\torus\times\Omega, \\
\Div\tuvel &= \tRD && \tin\torus\times\Omega, \\
\tuvel\np{t, \xprime, 0} &= -\pt\eta\np{t, \xprime} e_3 && \ton\torus\times\torusn^2, \\
\tuvel(t,\xprime, 1) &= 0 && \ton\torus\times\torusnn,\\
\int_{\omega} \eta\np{t, \xprime} \dxp &= 0,
\end{pdeq}
\end{align}
where $\tff:=\ff\circ\phi_\eta$ and the nonlinear terms $\tRS$, $\tRF$ and $\tRD$ are given by
\begin{align*}
&\RFtilde\np{\tuvel, \tupres, \eta} \coloneqq \tRF - \nsnl{\tuvel}, \\
&\tRD\np{\tuvel, \eta} \coloneqq \Div\RD, \\
&\tRS\np{\tuvel, \tupres, \eta} \coloneqq -e_3\cdot\bb{\np{\fluidstress(\tuvel, \tupres)_{| x_3= 0}\np{\tilde\normalvec_t - e_3}} + \Seta\tilde\normalvec_t},
\end{align*}
the normal vectors given by
\begin{align*}
\tilde\normalvec_t \coloneqq \normalvec_t\circ\phi_\eta \qquad\text{and}\qquad \normalvec_t = \np{1 + \snorm{\gradp\eta}^2}^{-\frac{1}{2}}\begin{pmatrix} \gradp\eta \\ -1 \end{pmatrix},
\end{align*}
and the terms $\tRF$, $\RD$ and $\Seta$ given by
\begin{align*}
&\tRF\np{\tuvel, \tupres, \eta} \coloneqq \partial_{x_3}\tuvel\frac{\rho\pt\eta}{1+\eta} + \sum_{j,k,l = 1}^3 \partial_{x_l}\partial_{x_k}\tuvel\np{\EMt{k}{l} + \EMt{l}{k} + \EMt{j}{k}\EMt{j}{l}}\circ\phi_\eta \\
&\qquad\qquad + \sum_{j, k=1}^3 \partial_{x_k}\tuvel\np{\partial_{x_j}\EMt{j}{k}}\circ\phi_\eta - \frac{\partial_{x_3}\tupres}{1+\eta}\begin{pmatrix} \rho\gradp\eta \\ -\eta \end{pmatrix} + \grad\tuvel\np{\Et\circ\phi_\eta}\tuvel, \\
&\RD\np{\tuvel} \coloneqq -\begin{pmatrix} \eta\tuvel' \\ \rho\gradp\eta\cdot\tuvel'  \end{pmatrix}, \\
&\Seta\np{\tuvel, \eta} \coloneqq \bp{\np{\grad\tuvel \np{\Et\circ\phi_\eta} + \np{\grad\tuvel \np{\Et\circ\phi_\eta}}^\top}}_{| x_3= 0},
\end{align*}
where $\rho$ and the matrix $\Eeta$ are defined as
\begin{align*}
\rho\np{x_3}\coloneqq 1-x_3, \qquad\text{and}\qquad
\EMeta{i}{j}\coloneqq\frac{1}{1+\eta}
\begin{cases}
0 & \text{if } i\neq 3, \\
\rho\partial_{x_j}\eta & \text{if } i=3 \text{ and } j\neq 3, \\
-\eta & \text{if } i=3=j.
\end{cases}
\end{align*}
To simplify notation in the following, we set
\begin{align*}
\ESR{q}\np{\torus\times\torusnn}:=\WSR{2}{q}\bp{\torus; \WSR{1-\frac{1}{q}}{q}\np{\torusnn}}\cap \LR{q}\bp{\torus; \WSR{5-\frac{1}{q}}{q}\np{\torusnn}}.
\end{align*}

\begin{lem}\label{EstMatrixTransformation}
Let $q\in (1,\infty)$. Then there exists an $\varepsilonn>0$ and
a constant $\Cc{C} >0$ such that if $\eta$ satisfies
\begin{align}\label{SmallnessEtaNLFS}
\norm\eta_{\ESR{q}\np{\torus\times\torusnn}} \leq \varepsilonn,
\end{align}
then $\Eeta\in\LR{q}\np{\torus\times\Omega}$ and
\begin{align}\label{EstE}
\norm{\Eeta}_{\LR{q}\np{\torus\times\Omega}} \leq \Cclast{C}\norm\eta_{\LR{q}\np{\torus; \WSR{1}{q}\np\torusnn}}.
\end{align}
\end{lem}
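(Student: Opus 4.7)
The plan is to reduce \eqref{EstE} to a pointwise bound on $\Eeta$, supplemented by a sup-norm control on $\eta$ that keeps $1+\eta$ bounded away from zero.

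First I would establish the continuous embedding $\ESR{q}\np{\torus\times\torusnn}\hookrightarrow\LR{\infty}\np{\torus\times\torusnn}$. The spatial component $\LR{q}\bp{\torus;\WSR{5-1/q}{q}\np{\torusnn}}$ embeds into $\LR{q}\bp{\torus;\LR{\infty}\np{\torusnn}}$ by the 2D Sobolev embedding, since $5-1/q>2/q$ for $q>1$, and combining this with the time regularity provided by $\WSR{2}{q}\np{\torus}\hookrightarrow\LR{\infty}\np{\torus}$ yields joint sup-norm control in $(t,x')$, in the spirit of Theorem \ref{SobEmbeddingThm}. Let $C_0>0$ denote the resulting embedding constant, so that $\norm{\eta}_{\LR{\infty}\np{\torus\times\torusnn}}\leq C_0\norm{\eta}_{\ESR{q}\np{\torus\times\torusnn}}$. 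Choosing $\varepsilonn := 1/(2C_0)$ in \eqref{SmallnessEtaNLFS} then forces $\snorm{\eta(t,x')}\leq 1/2$, hence $\snorm{(1+\eta)^{-1}}\leq 2$ pointwise a.e.

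Second, reading off the definition of $\EMeta{i}{j}$, every entry is either $0$, equal to $-\eta/(1+\eta)$, or of the form $\pm\rho\,\partial_{x_j}\eta/(1+\eta)$. Using $\snorm{\rho(x_3)}=\snorm{1-x_3}\leq 1$ on $[0,1]$ and the uniform bound on $(1+\eta)^{-1}$ from the previous step, I obtain the pointwise a.e.\ estimate
\[
\snorm{\Eeta(t,x',x_3)} \leq C\bp{\snorm{\eta(t,x')} + \snorm{\gradp\eta(t,x')}}.
\]

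Since the right-hand side is independent of $x_3$, integrating in $x_3\in(0,1)$ contributes only a factor of $1$, and taking the $\LR{q}$ norm in $(t,x')\in\torus\times\torusnn$ yields exactly \eqref{EstE}. The only non-routine step is verifying the $\ESR{q}\hookrightarrow\LR{\infty}$ embedding in the first step; the remainder is pointwise arithmetic and Fubini. I therefore expect this embedding to be the main technical point, but it should reduce to a standard (or mildly extended) application of Theorem \ref{SobEmbeddingThm}.
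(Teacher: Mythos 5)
Your proof is correct and takes essentially the same route as the paper: establish an $\LR{\infty}$ bound on $\eta$ via the anisotropic Sobolev embedding, use smallness of $\norm{\eta}_{\ESR{q}}$ to bound $(1+\eta)^{-1}$ uniformly, and then read off the pointwise structure of $\Eeta$ (entries being $0$, $-\eta/(1+\eta)$, or $\rho\,\partial_{x_j}\eta/(1+\eta)$ with $\snorm{\rho}\leq 1$) to conclude the $\LR{q}$ estimate. One caveat on your heuristic for the embedding: knowing $\eta\in\LR{q}\bp{\torus;\LR{\infty}\np{\torusnn}}$ together with time regularity $\WSR{2}{q}(\torus;\WSR{1-1/q}{q})$ does not immediately tensorize to $\LR{\infty}\np{\torus\times\torusnn}$; a genuine mixed-derivative interpolation is needed. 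The paper sidesteps this by first observing $\ESR{q}\hookrightarrow\WSR{2,4}{q}\np{\torus\times\torusnn}$ (using $1-1/q>0$ and $5-1/q>4$) and then invoking Theorem \ref{SobEmbeddingThm} with $m=2$, $\Mx=\Mt=0$, $\alpha=2$, which is exactly what you gesture at — so your final appeal to that theorem is the right move, but the "combine two one-variable embeddings" step should be replaced by that direct citation.
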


\begin{proof}
Due to the definition of $\Eeta$, we have
\begin{align}\label{EstE1}
\norm{\Eeta}_{\LR{q}\np{\torus\times\Omega}} \leq \normL{\frac{\rho\gradp\eta}{1+\eta}}_{\LR{q}\np{\torus\times\Omega}} + \normL{\frac{\eta}{1+\eta}}_{\LR{q}\np{\torus\times\Omega}}.
\end{align}
Utilizing the Sobolev embedding theorem
(Theorem \ref{SobEmbeddingThm}) with $m=2$, $m_x=0=\Mt$
and $\alpha = 2$, we find for any $q>1$ that
\begin{align*}
\norm\eta_{\LR{\infty}\np{\torus\times\torusnn}} \leq \Cc[ConstEmbeddingNonlinFS]{c}\norm\eta_{\WSR{2,4}{q}\np{\torus\times\torusnn}} \leq \Cclast{c}\norm\eta_{\ESR{q}\np\torust} \leq \Cclast{c}\varepsilonn,
\end{align*}
and consequently by choosing $\varepsilonn = \frac{1}{2\Cclast c}$ that
\begin{align*}
\begin{aligned}
\normL{\frac{1}{1+\eta}}_{\LR{\infty}\np{\torus\times\torusnn}} &\leq \frac{1}{1 - \norm\eta_{\LR{\infty}\np{\torus\times\torusnn}}} \leq \frac{1}{1-\const{ConstEmbeddingNonlinFS}\varepsilonn} \leq 2<\infty.
\end{aligned}
\end{align*}
In view of \eqref{EstE1}, this estimate implies \eqref{EstE} since $x_3\in (0,1)$.
\end{proof}

\begin{lem}[Estimates of nonlinear terms]\label{FSNonLinEstNLTerme}
Let $\varepsilonn>0$ and $q\in (2,\infty)$.
Then for any $\np{\uvel, \upres, \eta}\in\SLS{q}\np{\torus\times\Omega}$
satisfying \eqref{SmallnessEtaNLFS}
the nonlinear terms obey 
\begin{align}
&\begin{aligned}
&\norm{\RFtilde}_{\LR{q}\np{\torus\times\Omega}} \leq \Cc{C}\bp{\np{1 + \varepsilonn}\norm{\uvel}_{\WSR{1,2}{q}\np{\torus\times\Omega}} + \norm{\grad\upres}_{\LR{q}\np{\torus\times\Omega}} \\
&\qquad\qquad\qquad\qquad\qquad\qquad\qquad\qquad\qquad + \norm{\uvel}_{\WSR{1,2}{q}\np{\torus\times\Omega}}^2}\norm{\eta}_{\ESR{q}\np{\torus\times\torusnn}},
\end{aligned}\label{RFEst} \\
&\norm{\tRD}_{\LR{q}\np{\torus; \WSR{1}{q}\np\Omega} \cap \WSR{1}{q}\np{\torus; \WSRD{-1}{q}\np\Omega}} \leq \Cc{C}\norm\uvel_{\WSR{1,2}{q}\np{\torus\times\Omega}}\norm\eta_{\ESR{q}\np{\torus\times\torusnn}},\label{RDEst} \\
&\begin{aligned}
&\norm{\tRS}_{\LR{q}\np{\torus;\WSR{1-\frac{1}{q}}{q}\np\torusnn}} \leq \Cc{C}\np{1 + \varepsilonn}\bp{\norm{\eta}_{\ESR{q}\np{\torus\times\torusnn}}\norm\uvel_{\WSR{1,2}{q}\np{\torus\times\Omega}} \\
&\qquad\qquad\qquad\qquad\qquad\qquad\qquad\qquad + \norm{\uvel}_{\WSR{1,2}{q}\np{\torus\times\Omega}} + \norm{\upres}_{\LR{q}\np{\torus; \WSR{1}{q}\np\Omega}}}.\label{RSEst}
\end{aligned}
\end{align}
\end{lem}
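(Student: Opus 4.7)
The plan is to treat the three nonlinear terms $\RFtilde$, $\tRD$, $\tRS$ separately, in each case decomposing the expression into products of factors estimated via H\"older's inequality combined with the anisotropic Sobolev embeddings of Theorem \ref{SobEmbeddingThm} and the pointwise bound on $\Eeta$ from Lemma \ref{EstMatrixTransformation}. The hypothesis $q>2$ is crucial because the embeddings I will invoke, most notably $\ESR{q}\np{\torus\times\torusnn}\embeds\LR{\infty}\np{\torus\times\torusnn}$ (Theorem \ref{SobEmbeddingThm} with $m=2$, $\Mt=\Mx=0$, $\alpha=2$) and its variants with up to three spatial derivatives on $\eta$, are only available past this threshold. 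Under the smallness assumption \eqref{SmallnessEtaNLFS} one also has $\norm{\np{1+\eta}^{-1}}_{\LR{\infty}}\le 2$, which generates the benign prefactor $1+\varepsilonn$.

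For \eqref{RFEst} I would collect the summands of $\tRF$ by type: (i) the $\partial_{x_3}\tuvel\,\rho\pt\eta/(1+\eta)$ term, (ii) second-order velocity terms with coefficients linear or quadratic in $\Eeta$, (iii) first-order velocity terms weighted by $\partial_{x_j}\Eeta$, (iv) pressure terms weighted by $(\rho\gradp\eta,-\eta)/(1+\eta)$, and (v) the modified convection $\grad\tuvel\,\Eeta\,\tuvel$, noting that the subtracted original convection $\np{\tuvel\cdot\grad}\tuvel$ cancels against the corresponding piece of $\tRF$. For each summand one factor involving $\eta$ is placed in an $\LR{\infty}$-type norm via Theorem \ref{SobEmbeddingThm} applied to $\ESR{q}$, while the velocity or pressure factor is left in its natural $\LR{q}$ class. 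The convective term (v) contributes the quadratic piece $\norm{\uvel}_{\WSR{1,2}{q}\np{\torus\times\Omega}}^2$ after H\"older with $\uvel\in\LR{2q}\np{\torus\times\Omega}$ (by Theorem \ref{SobEmbeddingThm}), and $\Eeta$ is handled through Lemma \ref{EstMatrixTransformation}; all remaining terms yield linear-in-$\np{\uvel,\upres}$ contributions multiplied by $\norm{\eta}_{\ESR{q}\np{\torus\times\torusnn}}$.

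For \eqref{RDEst} I would use that $\tRD=\Div\RD$ and that $\RD$ is bilinear in $(\eta,\gradp\eta)$ and $\tuvel'$. The $\LR{q}\bp{\torus;\WSR{1}{q}\np\Omega}$-component follows from the product rule after placing the $\eta$-factor in $\LR{\infty}\bp{\torus;\WSR{1}{\infty}\np{\torusnn}}$ via Theorem \ref{SobEmbeddingThm}. For the $\WSR{1}{q}\bp{\torus;\WSRD{-1}{q}\np\Omega}$-component, $\pt\tRD=\Div\,\pt\RD$; estimating the $\WSRD{-1}{q}$-norm by testing against an element of $\WSRD{1}{q'}\np\Omega$ and integrating by parts reduces the task to an $\LR{q}\np{\torus\times\Omega}$-bound on $\pt\RD$. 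Expanding by the product rule yields terms of type $\pt\eta\,\tuvel'$ or $\eta\,\pt\tuvel'$; each is controlled by putting the lower-regularity factor in $\LR{q}$ and the other in a suitable $\LR{\infty}$-class through Theorem \ref{SobEmbeddingThm}.

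The main obstacle is \eqref{RSEst}, because it requires estimates in the anisotropic trace space $\LR{q}\bp{\torus;\WSR{1-\frac{1}{q}}{q}\np{\torusnn}}$. I would first use the trace operator \eqref{TraceGaldi} to bring $\fluidstress\np{\tuvel,\tupres}_{|x_3=0}$ into this space at the cost of $\norm{\uvel}_{\WSR{1,2}{q}\np{\torus\times\Omega}}+\norm{\upres}_{\LR{q}\np{\torus;\WSR{1}{q}\np\Omega}}$, and then multiply by $\tilde\normalvec_t-e_3$. Writing $\tilde\normalvec_t-e_3$ as a smooth vector-valued function of $\gradp\eta$ that vanishes at the origin, its $\WSR{1-\frac{1}{q}}{q}\np{\torusnn}$-norm is controlled by that of $\gradp\eta$, itself bounded by $\norm{\eta}_{\ESR{q}\np{\torus\times\torusnn}}$. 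The delicate point is to estimate the pointwise product of two elements of $\LR{q}\bp{\torus;\WSR{1-\frac{1}{q}}{q}\np{\torusnn}}$ in the same space: I would achieve this by interpolating between $\WSR{1}{q}$- and $\LR{q}$-norms and exploiting that $\eta$ carries a surplus of regularity in $\ESR{q}$, so that the $\eta$-factor can always be placed in $\LR{\infty}\bp{\torus;\WSR{1}{\infty}\np{\torusnn}}$ while the stress factor stays in the trace space. The $\Seta\tilde\normalvec_t$ contribution is treated identically, invoking Lemma \ref{EstMatrixTransformation} to control $\Eeta_{|x_3=0}$ and the trace of $\grad\tuvel$.
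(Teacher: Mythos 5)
Your approach matches the paper's in both spirit and method: H\"older's inequality, the anisotropic Sobolev embeddings of Theorem~\ref{SobEmbeddingThm} for $q>2$ which place $\eta$ and its derivatives in $\LR{\infty}$-type norms, the duality reformulation of the $\WSRD{-1}{q}$-norm that reduces $\norm{\pt\tRD}_{\WSRD{-1}{q}}$ to an $\LR{q}$-bound on $\pt\RD$, Lemma~\ref{EstMatrixTransformation}, and the trace estimate~\eqref{TraceGaldi} for $\tRS$. The paper only writes out~\eqref{RDEst} in detail and states that~\eqref{RFEst} and~\eqref{RSEst} ``follow similarly,'' so your sketch is precisely the expansion of those omitted steps.

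One assertion in your treatment of~\eqref{RFEst} should, however, be corrected. You claim that ``the subtracted original convection $\np{\tuvel\cdot\grad}\tuvel$ cancels against the corresponding piece of $\tRF$.'' With the paper's explicit definitions it does not: the convective piece inside $\tRF$ is $\grad\tuvel\np{\Et\circ\phi_\eta}\tuvel$, and the matrix $\EMeta{i}{j}$ is defined so that it vanishes at $\eta=0$, so that term is \emph{already} the $\eta$-weighted perturbation and contains no copy of $\nsnl{\tuvel}$ to absorb. Taken literally, $\RFtilde=\tRF-\nsnl{\tuvel}$ therefore retains the full $-\nsnl{\tuvel}$, which carries no $\eta$-prefactor, and the bound~\eqref{RFEst} as printed --- with $\norm{\eta}_{\ESR{q}\np{\torus\times\torusnn}}$ multiplying all three bracketed terms, including $\norm{\uvel}_{\WSR{1,2}{q}}^2$ --- cannot hold. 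This is an inconsistency in the paper's own statement (the consistent reading is that the quadratic term sits outside the $\eta$-weighted bracket, matching the bound $r^2+r^3$ invoked later in the proof of Theorem~\ref{mainThm}); your plan would be repaired by dropping the cancellation rationale and estimating the uncompensated $-\nsnl{\tuvel}$ directly via $\norm{\uvel}_{\LR{2q}}\norm{\grad\uvel}_{\LR{2q}}\lesssim\norm{\uvel}_{\WSR{1,2}{q}}^2$ using Theorem~\ref{SobEmbeddingThm}, without an $\eta$-factor. A parallel remark applies to $\tRS$: the factor $\tilde\normalvec_t-e_3$ in the paper's definition does \emph{not} vanish at $\eta=0$, since $\normalvec_t\to -e_3$ there; the factor that vanishes (and that you implicitly assume when you describe it as a smooth function of $\gradp\eta$ ``that vanishes at the origin'') is $\tilde\normalvec_t+e_3$. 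Neither slip affects your overall strategy, which is sound and identical to the paper's, but your write-up should state the estimate of the convective term honestly rather than appeal to a cancellation that is not present.
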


\begin{proof}
We begin by proving estimate \eqref{RDEst}.
Observe that Theorem \ref{SobEmbeddingThm} with parameters
$m=2$, $\Mt=0$, $\Mx=2$ and $\alpha = 1$ (or
$m=2$, $\Mt=1$, $\Mx=0$ and $\alpha = 1$) yields
\begin{align*}
\norm\eta_{\WSR{1,2}{\infty}\np{\torus\times\Omega}} \leq \Cc{c} \norm\eta_{\WSR{2,4}{q}\np{\torus\times\torusnn}} \leq \Cclast{c}\norm\eta_{\ESR{q}\np\torust}
\end{align*}
for $q>2$.
Further observer that for a.e. $t\in\torus$
\begin{align*}
\norm{\Div\RD\np{t, \cdot}}_{\WSRD{-1}{q}\np\Omega} &= \sup_{\psi\in\WSRD{1}{q'}\np\Omega, \, \norm{\grad\psi}_{\LR{q'}\np\Omega}=1} \snorm{\langle \Div\RD\np{t, \cdot}, \psi\np{t, \cdot} \rangle} \\
&= \sup_{\psi\in\WSRD{1}{q'}\np\Omega, \, \norm{\grad\psi}_{\LR{q'}\np\Omega}=1} \snorm{\langle \RD\np{t, \cdot}, \grad\psi\np{t, \cdot} \rangle} \leq \norm{\RD}_{\LR{q}\np\Omega}
\end{align*}
holds.
Hence, $\tRD = \Div\RD$ fulfills
\begin{align}\label{EstRD}
\begin{aligned}
&\norm{\tRD}_{\LR{q}\np{\torus; \WSR{1}{q}\np\Omega} \cap \WSR{1}{q}\np{\torus; \WSRD{-1}{q}\np\Omega}} \leq \norm{\RD}_{\WSR{1,2}{q}\np{\torus\times\Omega}} \\
&\qquad\qquad \leq \Cc{c} \norm{\uvel}_{\WSR{1,2}{q}\np{\torus\times\Omega}} \norm{\eta}_{\WSR{1,2}{\infty}\np{\torus\times\torusnn}} \leq \Cc{c}\norm\uvel_{\WSR{1,2}{q}\np{\torus\times\Omega}}\norm\eta_{\ESR{q}\np{\torus\times\torusnn}}
\end{aligned}
\end{align}
and thus \eqref{RDEst}.
The $\LR{q}$-estimates \eqref{RFEst} and \eqref{RSEst} follow similarly by an application of H\"older's inequality and
Theorem \ref{SobEmbeddingThm}.
\end{proof}

\begin{proof}[Proof of Theorem \ref{mainThm}]
We employ the contraction mapping principle  based
on the $\LR{q}$ estimates deduced for the linear
system \eqref{LinearizedSystem}.
To this end, let
\begin{align*}
&\OA\colon\YFS\np{\torus\times\Omega}\to\SLS{q}\np{\torus\times\Omega} 
\end{align*}
be the solution operator corresponding to \eqref{LinearizedSystem} with
\begin{align*}
&\YFS\np{\torus\times\Omega} := \\
& \qquad \LR{q}\np{\torus\times\Omega}^3\times\LR{q}\bp{\torus; \WSRNN{1}{q}\np\Omega}\cap\WSR{1}{q}\bp{\torus; \WSRD{-1}{q}\np\Omega}\times\LR{q}\bp{\torus;\WSR{1-\frac{1}{q}}{q}\np\torusnn}
\end{align*}
and
\begin{align*}
\WSRNN{1}{q}\np\Omega := \setcl{\g\in\WSR{1}{q}\np\Omega}{\int_\Omega g \,\dx = 0}.
\end{align*}
By Theorem \ref{MainThmFSLin}, $\OA$ is a well-defined bounded operator.
We seek a solution to \eqref{NonLinFSOnReference} as a fixed point of the mapping
\begin{align*}
\FPM\colon\SLS{q}\np{\torus\times\Omega}\to \SLS{q}\np{\torus\times\Omega}, \qquad \FPM\np{\uvel, \upres, \eta}\coloneqq \OA\bp{\ff + \RFtilde, \tRD, \fp + \tRS}.
\end{align*}
Let $r>0$ and consider some
$\np{\uvel, \upres, \eta}\in\SLS{q}\np{\torus\times\Omega}\cap\B_r$, where $B_r$ denotes the closed ball in $\SLS{q}\np{\torus\times\Omega}$ with radius $r$.
By Lemma \ref{FSNonLinEstNLTerme} we obtain
\begin{align*}
\norm{\RFtilde}_{\LR{q}\np{\torus\times\Omega}} + \norm{\tRD}_{\LR{q}\np{\torus; \WSR{1}{q}\np\Omega}\cap\WSR{1}{q}\np{\torus; \WSRD{-1}{q}\np\Omega}} &+ \norm{\tRS}_{\LR{q}\np{\torus; \WSR{1-\frac{1}{q}}{q}\np\torusnn}} \\
&\qquad\qquad\qquad\quad\leq \Cc{c}\np{1-\varepsilon}\np{r^2 + r^3}
\end{align*}
and therefore
\begin{align*}
\norm{\FPM}_{\SLS{q}} &\leq \norm{\OA}\bp{\norm{\ff}_{\LR{q}\np{\torus\times\Omega}} + \norm{\RFtilde}_{\LR{q}\np{\torus\times\Omega}} + \norm{\tRS}_{\LR{q}\np{\torust}} \\
&\qquad + \norm{\tRD}_{\LR{q}\np{\torus; \WSR{1}{q}\np\Omega}\cap\WSR{1}{q}\np{\torus; \WSRD{-1}{q}\np\Omega}} + \norm{\fp}_{\LR{q}\np{\torust}}} \\
&\leq \Cc{c}\np{\varepsilon + r^2 + r^3}.
\end{align*}
Choosing $r=\sqrt\varepsilon$ and $\varepsilon$ sufficiently small, we have
$\Cclast{c}\np{\varepsilon + r^2 + r^3}\leq r$, in which case $\FPM$ becomes a self-mapping
on $B_r$.
To complete the proof, it remains to show that $\FPM$ is a contraction.
For this purpose we utilize Theorem \ref{SobEmbeddingThm} with $m=2$, $\Mt=1$, $\Mx=0$, $\alpha=1$ and subsequently $m=2$, $\Mt=0$, $\Mx=0$, $\alpha=2$ to deduce
\begin{align*}
\norm{\eta}_{\LR{\infty}\np{\torust}} + \norm{\pt\eta}_{\LR{\infty}\np{\torust}} \leq \Cc{c}\norm{\eta}_{\WSR{2,4}{q}\np{\torust}} \leq \Cclast{c}\norm\eta_{\ESR{q}\np{\torust}}.
\end{align*}
We thus conclude from \eqref{SmallnessEtaNLFS} that
\begin{align*}
&\normL{\grad\uvel\partial_t\eta \frac{1}{1 + \eta} - \grad\vvel\partial_t\zeta \frac{1}{1 + \zeta}}_{\LR{q}\np{\torus\times\Omega}} \leq \Cc{c}\varepsilon
\norm{\np{\uvel, \upres, \eta} - \np{\vvel, \vpres, \zeta}}_{\SLS q},
\end{align*}
and similarly
\begin{align*}
&\normL{\frac{\partial_{x_3}\upres}{1+\eta}\begin{pmatrix} \rho\gradp\eta \\ -\eta \end{pmatrix} - \frac{\partial_{x_3}\vpres}{1+\zeta}\begin{pmatrix} \rho\gradp\zeta \\ -\zeta \end{pmatrix}}_{\LR{q}\np{\torus\times\Omega}} \leq \Cc{c} \varepsilon \norm{\np{\uvel, \upres, \eta} - \np{\vvel, \vpres, \zeta}}_{\SLS q}.
\end{align*}
The additional terms in $\RFtilde$ can be estimated analogously to deduce
\begin{align*}
\norm{\RFtilde\np{\uvel, \upres, \eta} - \RFtilde\np{\vvel, \vpres, \zeta}}_{\LR{q}\np{\torus\times\Omega}} \leq \Cc{c} \varepsilon
\norm{\np{\uvel, \upres, \eta} - \np{\vvel, \vpres, \zeta}}_{\SLS q}.
\end{align*}
A straightforward calculation yields
\begin{align*}
&\norm{\tRD\np{\uvel, \upres, \eta} - \tRD\np{\vvel, \vpres, \zeta}}_{\LR{q}\np{\torus; \WSR{1}{q}\np\Omega}\cap\WSR{1}{q}\np{\torus; \WSRD{-1}{q}\np\Omega}} \leq \Cc{c} \varepsilon \norm{\np{\uvel, \upres, \eta} - \np{\vvel, \vpres, \zeta}}_{\SLS q}.
\end{align*}
In the case of $\tRS$, we further have to employ the properties
of the trace operator $\TDN$
defined in \eqref{TraceGaldi}
to deduce that
\begin{align*}
\norm{\tRS\np{\uvel, \upres, \eta} - \tRS\np{\vvel, \vpres, \zeta}}_{\LR{q}\np{\torus; \WSR{1-\frac{1}{q}}{q}\np\torusnn}} \leq \Cc{c} \varepsilon
\norm{\np{\uvel, \upres, \eta} - \np{\vvel, \vpres, \zeta}}_{\SLS q}.
\end{align*}
Collecting the estimates deduced for $\tRF$, $\tRD$ and $\tRS$,
we obtain that
\begin{align*}
\norm{\FPM\np{\uvel, \upres, \eta} - \FPM\np{\vvel, \vpres, \zeta}}_{\SLS q} \leq \Cc{c} \varepsilon \norm{\np{\uvel, \upres, \eta} - \np{\vvel, \vpres, \zeta}}_{\SLS q}.
\end{align*}
Choosing $\varepsilon$ sufficiently small, we deduce that $\FPM$ is a contracting
self-mapping. By the contraction mapping principle, existence of a fixed point for
$\FPM$ follows and completes the proof.
\end{proof}

\bibliographystyle{abbrv}

\end{document}